
\documentclass{amsart}



\RequirePackage{color}


\def\smallskip{\vskip\smallskipamount}
\def\medskip{\vskip\medskipamount}
\def\bigskip{\vskip\bigskipamount}

\usepackage{graphicx}
\usepackage{amsmath,amsthm,bm,mathrsfs,amscd,tikz}
\usepackage{ytableau}
\usepackage{comment}
\usepackage{mathdots}
\usepackage[all]{xy}
\usepackage{graphicx}
\usepackage[colorinlistoftodos]{todonotes}
\usepackage{enumerate}
\usepackage{feynmp-auto}
\usepackage[english]{babel}
\usepackage[utf8]{inputenc}
\usepackage{float}
\usepackage{tikz}
\usepackage{tikz-cd}
\usepackage{amssymb}
\usepackage{graphicx}
\usepackage{mathtools}
\usetikzlibrary{decorations.pathmorphing}





\newtheoremstyle{thmstyle}{}{}{\itshape}{}{\bfseries}{ }{5pt}{}
\newtheoremstyle{exstyle}{}{}{}{}{\bfseries}{ }{5pt}{}
\newtheoremstyle{defstyle}{}{}{}{}{\bfseries}{ }{5pt}{}
\newtheoremstyle{remstyle}{}{}{}{}{\bfseries}{ }{5pt}{}

\theoremstyle{thmstyle}
\newtheorem{thm}{Theorem}[section]
\newtheorem{theorem}[thm]{Theorem}

\newtheorem{lemma}[thm]{Lemma}

\newtheorem{proposition}[thm]{Proposition}

\newtheorem{corollary}[thm]{Corollary}

\theoremstyle{exstyle}

\newtheorem{example}[thm]{Example}

\theoremstyle{defstyle}

\newtheorem{definition}[thm]{Definition}
\newtheorem{def-prop}[thm]{Definition-Proposition}

\theoremstyle{remstyle}

\newtheorem{remark}[thm]{Remark}

\theoremstyle{remstyle}


\newcommand{\Str}{\operatorname{Str}}

\newcommand{\Hom}{\operatorname{Hom}}

\newcommand{\Ext}{\operatorname{Ext}}

\DeclareMathOperator*{\modu}{mod}

\DeclareMathOperator*{\add}{add}

\DeclareMathOperator*{\Brick}{Brick}
\DeclareMathOperator*{\ind}{ind}
\DeclareMathOperator*{\tors}{tors}
\DeclareMathOperator*{\ftors}{f-tors}
\DeclareMathOperator*{\torf}{torf}
\DeclareMathOperator*{\wide}{wide}
\DeclareMathOperator*{\fwide}{f-wide}
\DeclareMathOperator*{\End}{End}
\DeclareMathOperator*{\Bic}{Bic}
\DeclareMathOperator*{\Sim}{Sim}
\DeclareMathOperator{\JI}{JI}
\DeclareMathOperator*{\torshad}{torshad}
\DeclareMathOperator*{\widshad}{widshad}
\DeclareMathOperator*{\str}{str}
\DeclareMathOperator*{\Gen}{gen}
\DeclareMathOperator*{\Filt}{filt}

\newcommand{\bic}{\mathcal{B}ic}
\newcommand{\simeqd}{\mathrel{\rotatebox[origin=c]{-90}{$\simeq$}}}

\def\ra{\rightarrow}
\def\Mcal{\mathcal{M}}
\DeclareMathOperator{\Cov}{Cov}
\def\pr{\prime}\def\ov{\overline}
\newcommand*{\vcenteredhbox}[1]{\begingroup
\setbox0=\hbox{#1}\parbox{\wd0}{\box0}\endgroup}

\begin{document}


\title{A categorification of biclosed sets of strings}

\author[Garver]
{Alexander Garver}
\address{LaCIM, UQAM, Montréal, Québec, Canada}
\email{alexander.garver@gmail.com}

\author[McConville]{Thomas McConville}
\address{Massachusetts Institute of Technology, Cambridge, MA, USA}
\email{thomasmcconvillea@gmail.com}

\author[Mousavand]{Kaveh Mousavand} 
\address{LaCIM, UQAM, Montréal, Québec, Canada}
\email{mousavand.kaveh@gmail.com }

\thanks{AG received support from the Canada Research Chairs Program and NSERC grant RGPIN/05999-2014. KM received Bourse de stage international du FRQNT for a 4-month research stay at MIT and was also partially supported by ISM Scholarship.}

\subjclass[2010]{05E10, 16G20, 18B35}

\maketitle

\begin{abstract}
We consider the closure space on the set of strings of a gentle algebra of finite representation type. Palu, Pilaud, and Plamondon proved that the collection of all biclosed sets of strings forms a lattice, and moreover, that this lattice is congruence-uniform. Many interesting examples of finite congruence-uniform lattices may be represented as the lattice of torsion classes of an associative algebra. We introduce a generalization, the lattice of torsion shadows, and we prove that the lattice of biclosed sets of strings is isomorphic to a lattice of torsion shadows.

Finite congruence-uniform lattices admit an alternate partial order known as the shard intersection order. In many cases, the shard intersection order of a congruence-uniform lattice is isomorphic to a lattice of wide subcategories of an associative algebra. Analogous to torsion shadows, we introduce wide shadows, and prove that the shard intersection order of the lattice of biclosed sets is isomorphic to a lattice of wide shadows.
\end{abstract}

\section{Introduction}

Let $\Lambda$ be a finite dimensional associative algebra over a field $k$, and let $\modu(\Lambda)$ be the category of finitely generated left modules over $\Lambda$. A \emph{torsion class} is a full, additive subcategory of $\modu(\Lambda)$ that is closed under quotients and extensions. We consider the collection $\tors(\Lambda)$ of all torsion classes of $\Lambda$ as a poset ordered by inclusion. The poset $\tors(\Lambda)$ is a complete lattice \cite[Proposition 2.3]{IRTT}. Moreover, the lattice of torsion classes is known to be semidistributive \cite{GM} and completely congruence-uniform \cite{DIRRT}. This additional lattice structure is interesting from an algebraic point of view since it encodes homological information of $\Lambda$ as order-theoretic information.

The purpose of this article is to introduce the notion of a \emph{torsion shadow}, which is defined as the intersection of a torsion class with some fixed subcategory $\mathcal{M}$ of $\modu(\Lambda)$. We are particularly interested in bound quiver algebras $\Lambda$ obtained by ``doubling'' a gentle quiver; see Section~\ref{String Algebra} for background on gentle algebras and Section~\ref{torshad_section} on the doubling construction. The category $\mathcal{M}$ is additively generated by a certain collection of string modules, also specified in Section~\ref{torshad_section}. Before stating our main results, we summarize our motivation as follows.

To study the structure of $\tors(\Lambda)$ for a certain family of Jacobian algebras, $\tors(\Lambda)$ was realized in \cite{GM} as a quotient of a lattice of biclosed sets. We say a subset $X$ of a closure space is a \emph{biclosed set} if both $X$ and its complement are closed. The archetypal family of biclosed sets are the inversion sets of permutations of $n$, which corresponds to a certain closure space on the $2$-element subsets of $\{1,\ldots,n\}$. Bj\"orner and Wachs \cite{bjorner.wachs:shellable_2} introduced a surjective function from permutations of $n$ to binary trees with $n$ nodes in the context of poset topology, which has since found significance in combinatorial Hopf algebras \cite{loday.ronco:1998hopf}, constructions of generalized associahedra \cite{hohlweg2011permutahedra}, cluster algebras \cite{ReadingCamb}, and many other areas. From \cite{thomas:tamari}, we may interpret their map as a lattice quotient map from biclosed sets to $\tors(kQ)$ where $Q$ is the path quiver with $n-1$ vertices. Similar maps from biclosed sets to torsion classes were presented in \cite{GM} and \cite{palu2017non}.


In \cite{GM}, a categorification of biclosed sets as \emph{biclosed subcategories} is given, which we recall in Section~\ref{sec:biclosed}. However, while the poset $\Bic(\Lambda)$ of biclosed subcategories of $\modu(\Lambda)$ is a graded, congruence-uniform lattice for the algebras $\Lambda$ appearing in that paper, it is not even a lattice for a general algebra $\Lambda$. Furthermore, the lattice structure of biclosed subcategories in \cite{GM} does not have a clear homological interpretation. The main motivation for this article is to correct these deficiencies by interpreting biclosed subcategories as the torsion shadows of another algebra.


We now describe our main results. Let $(Q,I)$ be a gentle bound quiver and $A=kQ/I$ a gentle algebra. We let $\Pi(A)=k\ov{Q}/\ov{I}$ be the algebra for the ``doubled'' quiver, as defined in Section~\ref{torshad_section}. Then there is a canonical surjective homomorphism $\Pi(A)\ra A$ inducing a lattice map on torsion classes $\tors(\Pi(A))\ra \tors(A)$. Let $\mathcal{M}$ be the set of string modules that are reorientations of strings in $\modu(A)$. Then the lattice map factors as
\[ \tors(\Pi(A)) \ra \torshad{}_{\mathcal{M}}(\Pi(A)) \ra \tors(A). \]

\begin{theorem}\label{thm_main_1}
  There is an isomorphism of lattices $\Bic(A) \cong \torshad{}_{\mathcal{M}}(\Pi(A))$, which identifies the map $\Bic(A)\ra\tors(A)$ with $\torshad_{\mathcal{M}}(\Pi(A))\ra\tors(A)$.
\end{theorem}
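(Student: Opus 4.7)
The plan is to define an explicit map $\Phi \colon \Bic(A) \to \torshad_\mathcal{M}(\Pi(A))$ together with a candidate inverse $\Psi$ at the level of the underlying sets of string modules, and to verify that (a) both maps are well-defined, (b) they are mutually inverse, (c) they are order-preserving, and (d) they intertwine the two projections onto $\tors(A)$.

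For the construction, I would use that a biclosed subcategory $\mathcal{B}$ of $\modu(A)$ is determined by its set of string modules, as recalled in the section on biclosed subcategories. Since each string for $A$ has a canonical reorientation as a string for $\Pi(A)$, there is a natural injection $\iota \colon \mathcal{B} \hookrightarrow \mathcal{M}$ of the generating string modules. Define
\[
\Phi(\mathcal{B}) \;=\; T\bigl(\iota(\mathcal{B})\bigr) \cap \mathcal{M},
\]
where $T(X)$ denotes the smallest torsion class in $\modu(\Pi(A))$ containing a collection $X$ of modules. In the reverse direction, for a torsion shadow $S = \mathcal{T} \cap \mathcal{M}$ with $\mathcal{T} \in \tors(\Pi(A))$, define $\Psi(S)$ to be the additive subcategory of $\modu(A)$ generated by those string modules of $A$ whose reorientation lies in $S$.

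The central step, and the main obstacle, is verifying $\Phi(\mathcal{B}) = \iota(\mathcal{B})$ as a set, i.e. that closing up $\iota(\mathcal{B})$ under extensions and quotients inside $\modu(\Pi(A))$ does not pick up any new elements of $\mathcal{M}$. This is the place where biclosedness enters. I would proceed by classifying all short exact sequences in $\modu(\Pi(A))$ whose terms lie in $\mathcal{M}$, and showing that the resulting \emph{local moves} on strings are exactly the closure moves of the closure space on strings of $A$ used to define $\Bic(A)$ (for gentle algebras these are the overlap/kissing-type moves on strings recalled in the section on string algebras). Granting this dictionary, one direction reads: any $M \in \mathcal{M} \cap T(\iota(\mathcal{B}))$ is obtained from $\iota(\mathcal{B})$ by iterated such moves, and since $\mathcal{B}$ is closed for these moves, $M \in \iota(\mathcal{B})$. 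For the other inclusion, $\iota(\mathcal{B}) \subseteq T(\iota(\mathcal{B})) \cap \mathcal{M}$ is automatic. The analogous property of the complement $\mathcal{M} \setminus \iota(\mathcal{B})$ is what guarantees that $\Psi$ lands in biclosed sets: if $\Psi(S)$ failed to be biclosed, one could find a closure-obstruction inside $\mathcal{M}$, which by the dictionary would produce a short exact sequence in $\modu(\Pi(A))$ incompatible with $S$ being a torsion shadow.

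Once this combinatorial-to-homological translation is in hand, the remainder is formal. Order-preservation of $\Phi$ and $\Psi$ is immediate from their definitions, so the bijection is an isomorphism of posets, and because $\torshad_\mathcal{M}(\Pi(A))$ inherits a lattice structure from $\tors(\Pi(A))$ via $\mathcal{T} \mapsto \mathcal{T} \cap \mathcal{M}$, the isomorphism upgrades to a lattice isomorphism (this also re-proves that $\Bic(A)$ is a lattice for the gentle algebras of this paper). Finally, the surjection $\Pi(A) \twoheadrightarrow A$ induces the map $\tors(\Pi(A)) \to \tors(A)$ given by $\mathcal{T} \mapsto \mathcal{T} \cap \modu(A)$; since $\Phi(\mathcal{B})$ is built from the torsion class generated by the $A$-modules of $\mathcal{B}$, its image in $\tors(A)$ is the torsion class generated by $\mathcal{B}$, which is precisely the known map $\Bic(A) \to \tors(A)$ from the earlier work. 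Hence the triangle of lattice maps commutes, as asserted.
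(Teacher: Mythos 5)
There is a genuine gap in the proposal, and it occurs at the very first step: the assertion that ``each string for $A$ has a canonical reorientation as a string for $\Pi(A)$,'' giving a \emph{fixed} injection $\iota\colon \mathcal{B}\hookrightarrow\mathcal{M}$. No such fixed lift works. In the paper's construction the lift of a string $w\in B$ to a $\Pi(A)$-string depends on $B$ itself: the generators of the torsion class defining $\widetilde{\mathfrak{T}}(B)$ are the modules $M(\str(w_{\mathcal{D}}))$ for $w_{\mathcal{D}}\in\lambda_{\downarrow}(B)$, and the decoration $\mathcal{D}$ (which splits of $w$ lie in $B$) determines \emph{which} reorientation of $w$ is used. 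Different biclosed sets containing the same string $w$ generically force \emph{different} lifts $\str(w_{\mathcal{D}})\neq\str(w_{\mathcal{D}'})$, which correspond to non-isomorphic string modules over $\Pi(A)$.

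The algebra of Example~\ref{Ex_2cyc_alg} makes this concrete. The biclosed set $\{e_1,\alpha\}$ is sent to the torsion shadow $\add(M(\alpha)\oplus S_1)$, with $\alpha$ lifted to the direct string $\alpha$; the biclosed set $\{e_2,\alpha\}$ is sent to $\add(M(\alpha^*)\oplus S_2)$, with $\alpha$ lifted to $\alpha^*$. If one instead fixes $\iota(\alpha)=M(\alpha)$ once and for all, then for $\mathcal{B}=\{e_2,\alpha\}$ the category $T(\iota(\mathcal{B}))$ contains both $S_2$ and the quotient $S_1$ of $M(\alpha)$, hence all simples, hence $T(\iota(\mathcal{B}))\cap\mathcal{M}=\mathcal{M}$ --- the wrong answer, and moreover not injective in $\mathcal{B}$. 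The symmetric choice $\iota(\alpha)=M(\alpha^*)$ fails on $\{e_1,\alpha\}$ for the same reason. Thus the ``central step'' $\Phi(\mathcal{B})=\iota(\mathcal{B})$ is false for any fixed $\iota$, and the proposed dictionary between local closure moves on $\Str(A)$ and short exact sequences in $\mathcal{M}$ cannot be set up independently of $B$. The missing idea --- and the heart of the paper's argument --- is that the biclosed structure of $B$ (via the labels $w_{\mathcal{D}}\in\lambda_{\downarrow}(B)$ of the CU-labeling) selects, for each $w\in B$, the correct reorientation; Lemma~\ref{lemma_desc_T(B)} then characterizes $\widetilde{\mathfrak{T}}(B)$ intrinsically, and Proposition~\ref{Prop_inverses} verifies that the two assignments are mutually inverse. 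Repairing your proposal along these lines essentially reproduces the paper's proof; as written, the approach does not go through.
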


Analogously to torsion shadows, we introduce the notion of a \emph{wide shadow} in Section~\ref{sec:wide}, which is the intersection of a wide subcategory of $\modu(\Lambda)$ with a distinguished subcategory $\Mcal$. For the algebra $\Pi(A)$ and our choice of $\Mcal$, we exhibit a correspondence between wide shadows and torsion shadows that mimics the usual correspondence between wide subcategories and torsion classes given in \cite{IT},\cite{MS}.

\begin{theorem}\label{thm_main_2}
  There is a natural bijection between $\widshad(\Pi(A))$ and $\torshad(\Pi(A))$.
\end{theorem}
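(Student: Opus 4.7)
My plan is to descend the classical Ingalls--Thomas/Marks--Šťovíček correspondence between $\tors(\Pi(A))$ and $\wide(\Pi(A))$ to the shadow level. Recall that this correspondence gives a bijection $\alpha:\tors(\Pi(A))\to\wide(\Pi(A))$ sending a torsion class $\mathcal{T}$ to the wide subcategory
\[
\alpha(\mathcal{T})=\{X\in\mathcal{T}\mid \Ker f\in\mathcal{T}\text{ for every }f\colon Y\to X\text{ with }Y\in\mathcal{T}\},
\]
whose inverse $\beta$ sends $\mathcal{W}$ to $\Filt(\Gen\mathcal{W})$. I would introduce candidate maps
\[
\Phi\colon\torshad(\Pi(A))\to\widshad(\Pi(A)),\qquad \mathcal{T}\cap\Mcal\longmapsto \alpha(\mathcal{T})\cap\Mcal,
\]
together with its dual $\Psi$, and show they are well-defined and mutually inverse.

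The essential step is well-definedness: if $\mathcal{T},\mathcal{T}'$ are two torsion classes with $\mathcal{T}\cap\Mcal=\mathcal{T}'\cap\Mcal$, then $\alpha(\mathcal{T})\cap\Mcal=\alpha(\mathcal{T}')\cap\Mcal$. For this I would invoke the known description of the simple objects of the abelian category $\alpha(\mathcal{T})$ as exactly the bricks of $\Pi(A)$ labelling the Hasse cover relations of $\tors(\Pi(A))$ immediately below $\mathcal{T}$. Then $\alpha(\mathcal{T})\cap\Mcal$, as an exact abelian subcategory of $\Mcal$, is determined by these labelling bricks, provided they lie in $\Mcal$; and they are in turn determined by the shadow $\mathcal{T}\cap\Mcal$.

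The main obstacle, and the core combinatorial content of the proof, is to show that every such labelling brick of $\tors(\Pi(A))$ belongs to $\Mcal$. Since $\Pi(A)$ is itself a gentle algebra (by the doubling construction in Section~\ref{torshad_section}), every brick of $\Pi(A)$ is a string module over $\ov{Q}$ modulo $\ov{I}$, and I would verify by a direct analysis of the strings of $\ov{Q}/\ov{I}$ that the minimal bricks arising in any torsion class are precisely reorientations of strings of $A$, hence lie in $\Mcal$. Dually, one checks that the simple objects of any wide subcategory generated by a subset of $\Mcal$ again lie in $\Mcal$, which gives well-definedness of $\Psi$ by the same reasoning.

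Once well-definedness is settled, the identities $\Psi\Phi=\mathrm{id}$ and $\Phi\Psi=\mathrm{id}$ reduce to the classical identities $\beta\alpha=\mathrm{id}$ and $\alpha\beta=\mathrm{id}$ after intersecting with $\Mcal$, applied to any chosen representative of each shadow. Naturality, with respect to inclusion of shadows, is automatic since $\alpha$, $\beta$, and the functor $-\cap\Mcal$ are all order-preserving, so $\Phi$ and $\Psi$ preserve containment on shadows.
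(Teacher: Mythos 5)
Your route -- descending the Ingalls--Thomas/Marks--\v{S}\v{t}ov\'{\i}\v{c}ek bijection $\alpha\colon\tors(\Pi(A))\to\wide(\Pi(A))$ through $(-)\cap\Mcal$ -- is fundamentally different from the paper's, which proves Theorem~\ref{thm_main_2} by chaining the isomorphism $\torshad(\Pi(A))\cong\Bic(A)$ of Theorem~\ref{Bic-Torshad} with the isomorphism $\Psi(\Bic(A))\cong\widshad(\Pi(A))$ of Theorem~\ref{thm_psi_shad_isom}, and composing with the canonical set bijection $B\mapsto\psi(B)$ from a congruence-uniform lattice to its shard intersection order. Unfortunately your proposal has two genuine gaps.

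The first and most serious gap is the claim that the bricks labelling Hasse covers in $\tors(\Pi(A))$ all lie in $\Mcal$. This is false. Even though every string in $\widetilde{\Str}(A)$ is self-avoiding and hence defines a brick (Lemma~\ref{bricks_in_Pi}), the algebra $\Pi(A)$ typically has further bricks that do \emph{not} lie in $\Mcal$. In the running example $\Pi(A)=k\overline{Q}/\langle\alpha\beta,\beta\alpha,\beta^*\alpha^*,\alpha^*\beta^*\rangle$ from Example~\ref{Running example}, the word $\alpha^{-1}\beta^*$ is a string of $\Pi(A)$ whose specialization $\alpha^{-1}\beta^{-1}$ has inverse $\beta\alpha\in I$, so $\alpha^{-1}\beta^*\notin\widetilde{\Str}(A)$, and $M(\alpha^{-1}\beta^*)$ is a brick with socle $M(e_2)$ and top $M(e_1)^2$. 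Such bricks do label covers in $\tors(\Pi(A))$ -- in fact the Hasse quiver of $\tors(\Pi(A))$ uses \emph{all} bricks of $\Pi(A)$ -- so your well-definedness argument, which presupposes the labelling bricks sit inside $\Mcal$ and are therefore reconstructible from the shadow, breaks down at its core step. Relatedly, even for the bricks that do lie in $\Mcal$, knowing them does not determine $\alpha(\mathcal{T})\cap\Mcal$: an object of $\Mcal$ inside $\alpha(\mathcal{T})$ can be filtered inside $\alpha(\mathcal{T})$ by simple objects that escape $\Mcal$. This is exactly why the paper introduces a purely combinatorial substitute for the simples of a wide subcategory, namely the set $\Sim(W)$ of Lemma~\ref{lemma_shad_simples} defined intrinsically within the shadow, and proves in Lemmas~\ref{lemma_psiB_in_W}--\ref{lemma_W_in_psiB} by an induction on string length that the shadow is controlled by $\Sim(W)$.

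The second gap is the final naturality claim. The IT/MS bijection $\alpha$ is not an order-preserving map from $\tors$ to $\wide$, so there is no reason for a map built from it to respect inclusion of shadows; and indeed the bijection of Theorem~\ref{thm_main_2} established in the paper is not a poset isomorphism between $\torshad(\Pi(A))$ and $\widshad(\Pi(A))$ -- it is the composite with the (non--order-preserving) identification of a congruence-uniform lattice with its shard intersection order. Theorem~\ref{thm_main_2} asserts only a bijection, not an isomorphism of posets, and your proposal overclaims here.
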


We construct the bijection in Theorem~\ref{thm_main_2} in two ways -- first by using maps that resemble the ones defined in \cite{MS}, and secondly by identifying $\widshad(\Pi(A))$ with the lattice-theoretic shard intersection order of the (finite) congruence-uniform lattice $\torshad(\Pi(A))$. From the second description, we obtain a large family of congruence-uniform lattices whose shard intersection orders are also lattices, which is not true for general congruence-uniform lattices; see \cite{muhle} and \cite[Problem 9.5]{ReadingPAB}. The shard intersection orders we consider in this work include those discovered in \cite{cliftondillerygarver} where the lattice property was also proved.

The rest of the paper is organized as follows. Background on lattices and representations of gentle algebras is given in Sections~\ref{sec:lattice_prelim},\ref{sec:representation_prelim}, and~\ref{sec:gentle_alg_prelim}. The lattice structure of biclosed sets of strings is examined in Section~\ref{sec:biclosed}. Torsion shadows are introduced in Section~\ref{torshad_section} and Theorem~\ref{thm_main_1} is proved. Wide shadows are introduced in Section~\ref{sec:wide}. The canonical join complex and shard intersection order of the lattice of biclosed sets is determined in Sections~\ref{sec:canonical} and~\ref{sec:shard}, culminating in a proof of Theorem~\ref{thm_main_2}.



\section*{Acknowledgements}

The authors thank Hugh Thomas for helpful ideas and useful conversations early on in the project. The third author would like to profoundly thank the Department of Mathematics, as well as the International Students Office, at MIT for their unflagging support during his four-month research visit that resulted in this project. 

\section{Lattice theory preliminaries}\label{sec:lattice_prelim}


We recall some background on lattices. Proofs of claims made in this section may be found in \cite{freese1995free} and \cite[Section 2]{garver2016oriented}.

Let $(L,\le_L)$ be a finite lattice. For $x,y \in L$, if $x < y$ and there does not exist $z \in L$ such that $x < z <y$, we write $x \lessdot y$. Let $\Cov(L) := \{(x,y) \in L^{2} \mid x \lessdot y\}$ be the set of \textit{covering relations} of $L$. We let $\hat{0}, \hat{1} \in L$ denote the unique minimal and unique maximal elements of $L$, respectively. 


We say that an element $j \in L$ is \textit{join-irreducible} if $j \neq \hat{0}$ and whenever $j = x \vee y$, one has that $j = x$ or $j = y$. \textit{Meet-irreducible} elements $m \in L$ are defined dually. We denote the subset of join-irreducible (resp., meet-irreducible) elements by $\text{JI}(L)$ (resp., $\text{MI}(L)$). For $j\in\text{JI}(L)$ (resp., $m\in\text{MI}(L)$), we let $j_{*}$ (resp., $m^{*}$) denote the unique element of $L$ such that $j_{*}\lessdot j$ (resp., $m\lessdot m^{*}$).

For $A\subseteq L$, the expression $\bigvee A := \bigvee_{a \in A} a$ is \textit{irredundant} if there does not exist a proper subset $A^{\pr}\subsetneq A$ such that $\bigvee A^{\pr}=\bigvee A$. Given $A,B\subseteq \text{JI}(L)$ such that $\bigvee A$ and $\bigvee B$ are irredundant and $\bigvee A=\bigvee B$, we set $A\preceq B$ if for each $a\in A$ there exists $b\in B$ with $a\leq b$. In this situation, we say that $\bigvee A$ is a \textit{refinement} of $\bigvee B$. If $x\in L$ and $A\subseteq\text{JI}(L)$ such that $x=\bigvee A$ is irredundant, we say $\bigvee A$ is a \textit{canonical join representation} of $x$ if $A\preceq B$ for any other irrendundant join representation $x=\bigvee B,\ B\subseteq\text{JI}(L)$. Dually, one defines \textit{canonical meet representations}.

Now we assume that $L$ is a \textit{semidistributive} lattice. This means that for any three elements $x, y, z \in L$, the following properties hold:
\begin{itemize}
\item if $x \wedge z = y \wedge z$, then $(x \vee y) \wedge z = x \wedge z$, and
\item if $x \vee z = y \vee z$, then $(x \wedge y) \vee z = x \vee z$.
\end{itemize}
It is known that a lattice $L$ is semidistributive if and only if each element of $L$ has a canonical join representation and a canonical meet representation \cite[Theorem 2.24]{freese1995free}. Let $\Delta^{CJ}(L)$ be the collection of canonical join representations of elements of $L$. There is a canonical bijection $L \to \Delta^{CJ}(L)$ sending $x \mapsto A$ where $\bigvee A$ is the canonical join representation of $x$.


\begin{lemma}\cite[Theorem 1.1]{barnard2016canonical}\label{lem:flag}
  If $L$ is a semidistributive lattice, then $\Delta^{CJ}(L)$ is the set of faces of an abstract simplicial complex, called the canonical join complex. Furthermore, this complex is flag, meaning that $\{j_1,\ldots,j_m\}\subseteq\JI(L)$ is a face if and only if $\{j_a, j_b\}$ is a face for all $a\neq b$.
\end{lemma}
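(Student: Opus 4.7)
The plan is to use a standard characterization of canonical join representations in finite semidistributive lattices, after which both the simplicial-complex property and flagness become short consequences. The characterization I would establish or cite first is: a subset $A\subseteq\JI(L)$ is the canonical join representation of $\bigvee A$ if and only if $j\wedge\bigvee(A\setminus\{j\})\le j_*$ for every $j\in A$. The forward direction follows from the definition of canonical join representation together with meet-semidistributivity, and the reverse is proved by showing that any irredundant join representation $\bigvee B=\bigvee A$ can be refined against $A$ via the kappa construction on each join-irreducible; for this I would simply import the result from \cite{freese1995free}.

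Granting the characterization, the simplicial-complex property is immediate. If $A\in\Delta^{CJ}(L)$ and $A'\subseteq A$, then for each $j\in A'$ the inequality $\bigvee(A'\setminus\{j\})\le\bigvee(A\setminus\{j\})$ yields $j\wedge\bigvee(A'\setminus\{j\})\le j\wedge\bigvee(A\setminus\{j\})\le j_*$, so by the characterization $A'$ is the canonical join representation of $\bigvee A'$. Hence $\Delta^{CJ}(L)$ is closed under taking subsets.

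For flagness, the forward direction is the simplicial-complex property applied to pairs. For the converse, suppose $\{j_a,j_b\}\in\Delta^{CJ}(L)$ for all $a\neq b$. The characterization gives $j_a\wedge j_b\le(j_a)_*$ for every ordered pair, which (since $j_a$ is join-irreducible) is equivalent to $j_a\not\le j_b$. To conclude $A=\{j_1,\dots,j_m\}\in\Delta^{CJ}(L)$, I need to upgrade these pairwise statements to $j_i\not\le\bigvee_{k\neq i}j_k$ for each fixed $i$. The key tool is the standard lemma that in a finite meet-semidistributive lattice, the set $\{y\in L:j\not\le y\}$ is closed under finite joins for every $j\in\JI(L)$; equivalently, $j$ has a unique maximal non-upper-bound $\kappa(j)$, and the non-upper-bounds of $j$ form its principal order ideal. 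Applying this with $j=j_i$ to the elements $j_k$ for $k\neq i$ gives $j_i\not\le\bigvee_{k\neq i}j_k$, hence $j_i\wedge\bigvee_{k\neq i}j_k\le(j_i)_*$, and the characterization yields $A\in\Delta^{CJ}(L)$.

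The main obstacle is the preliminary material underlying both halves: the characterization in the first paragraph and the closure-under-joins lemma used for flagness. Both rest on the same underlying feature of finite meet-semidistributive lattices, namely the existence of the kappa map sending each join-irreducible to its largest non-upper-bound, and the plan is to import these facts from the lattice-theoretic literature rather than reprove them here.
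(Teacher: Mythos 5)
The characterization you state in the first paragraph is incorrect, and the error is fatal to both halves of the argument. Since $j$ is join-irreducible with unique lower cover $j_*$, the inequality $j\wedge\bigvee(A\setminus\{j\})\le j_*$ holds if and only if $j\not\le\bigvee(A\setminus\{j\})$, so your condition is exactly irredundance of the expression $\bigvee A$ and nothing more. Irredundance is not sufficient: in the pentagon $N_5$ with chain $\hat0<a<c<\hat1$ and atom $b$ (which is semidistributive), both $\{a,b\}$ and $\{b,c\}$ are irredundant antichain join-representations of $\hat1$, and your condition is satisfied by both (indeed $b\wedge c=\hat0\le a=c_*$), yet $\{a,b\}\preceq\{b,c\}$ strictly, so only $\{a,b\}$ is canonical. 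The correct characterization one wants here (and the one Barnard actually uses) is in terms of the $\kappa$ map: $A$ joins canonically iff it is an antichain of join-irreducibles and for each $j\in A$ one has $\bigvee(A\setminus\{j\})\le\kappa(j)$, where $\kappa(j)$ is the largest $y$ with $j_*\le y$ and $j\not\le y$. This is genuinely stronger than irredundance and is what makes the downward-closure step and the flagness argument nontrivial.

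The auxiliary fact invoked in the flagness step is also false. You claim that in a finite meet-semidistributive lattice the set $\{y:j\not\le y\}$ is closed under joins and is a principal ideal. Again $N_5$ refutes this: for $j=c$ the set $\{y:c\not\le y\}=\{\hat0,a,b\}$ has two maximal elements and $a\vee b=\hat1$ escapes it. What meet-semidistributivity actually gives is that $\{y:y\wedge j=j_*\}$ is join-closed with maximum $\kappa(j)$; the larger set $\{y:j\not\le y\}=\{y:y\wedge j\le j_*\}$ is a union of such sets and need not be join-closed. Both of your preliminary claims -- that irredundance detects canonical join representations, and that the non-upper-bounds of a join-irreducible form a principal ideal -- are theorems about distributive lattices, not semidistributive ones, which is precisely the distinction this lemma is sensitive to. Repairing the proof requires replacing the $j_*$-condition with the $\kappa$-condition throughout: the pairwise hypothesis then yields $j_b\le\kappa(j_a)$ for all $a\neq b$, and since $\kappa(j_a)$ is an honest element of $L$ the join $\bigvee_{k\neq a}j_k$ stays below it, which is the upgrade from pairwise to global that your argument was reaching for.
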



A set map $\lambda : \Cov(L) \to P$, where $(P, \leq_{P})$ is some poset is called an \textit{edge labeling}. 

\begin{definition}\label{defn_cu} An edge labeling  $\lambda: \Cov(L) \to Q$ is a \textit{CN-labeling} if $L$ and its dual $L^{*}$ satisfy the following: given $x,y,z \in L$ with $(z,x), (z,y) \in \Cov(L)$ and maximal chains $C_{1}$ and $C_{2}$ in $[z, x \vee y]$ with $x \in C_{1}$ and $y \in C_{2}$, \begin{enumerate}
\item[(CN$1$)]{the elements $x' \in C_{1}, y' \in C_{2}$ such that $(x', x \vee y), (y', x \vee y) \in \Cov(L)$ satisfy $$\lambda(x', x \vee y) = \lambda(z,y),\ \ \lambda(y', x \vee y) = \lambda(z, x);$$}
\item[(CN$2$)]{if $(u,v) \in \Cov(C_{1})$ with $z < u\lessdot v < x \vee y$, then $\lambda(z,x)<_{Q}\lambda(u,v)$ and $\lambda(z,y) <_{Q} \lambda(u,v)$;}
\item[(CN$3$)]{the labels on $\Cov(C_{1})$ are pairwise distinct.}
\end{enumerate}

We say that $\lambda$ is a \textit{CU-labeling} if, in addition, it satisfies \begin{enumerate}
\item[(CU$1$)]{$\lambda(j_{*}, j) \neq \lambda(j_{*}', j')$ for $j, j' \in \text{JI}(L)$, $j \neq j'$, and}
\item[(CU$2$)]{$\lambda(m, m^{*}) \neq \lambda(m', m'^{*})$ for $m, m' \in \text{MI}(L)$, $m \neq m'$.}
\end{enumerate} 

If $L$ admits a CU-labeling, it is said to be \textit{congruence-uniform}.\end{definition}

\begin{remark}\label{Remark_def_of_CU_lattice}
For completeness, we include the more standard definition of a congruence-uniform lattice. 

Recall that an equivalence relation $\Theta$ on the elements of $L$ is called a \textit{lattice congruence} of $L$ if $\Theta$ satisfies the following:
\begin{itemize}
\item if $x \equiv_\Theta y$, then $x\vee t \equiv_\Theta y \vee t$ and $x\wedge t \equiv_\Theta y \wedge t$ for each $x, y, t \in L$.
\end{itemize}
Let $\text{Con}(L)$ denote the set of all lattice congruences of $L$. The set $\text{Con}(L)$ turns out to be a distributive lattice when its elements are ordered by refinement.

Given $(x, y) \in \text{Cov}(L)$, we let $\text{con}(x,y)$ denote the most refined lattice congruence for which $x \equiv y$. Such congruences are join-irreducible elements of the lattice $\text{Con}(L)$. When $L$ is a finite lattice, the join-irreducibles (resp., meet-irreducibles) of $\text{Con}(L)$ are the congruences of the form $\text{con}(j_*,j)$ (resp., $\text{con}(m,m^*)$). We thus obtain surjections 
$$\begin{array}{rclccrcl}
\text{JI}(L) & \rightarrow & \text{JI}(\text{Con}(L)) & & &  \text{MI}(L) & \rightarrow & \text{MI}(\text{Con}(L)) \\
j & \mapsto & \text{con}(j_*,j) & & &  m & \mapsto & \text{con}(m,m^*).
\end{array}$$
If these maps are bijections, we say that $L$ is \textit{congruence-uniform}. It is known that this definition and the one given in Definition~\ref{defn_cu} are equivalent (for instance, see \cite[Proposition 2.5]{garver2016oriented}).
\end{remark}

We conclude this section by mentioning some general properties of CU-labelings and the definition of the lattice-theoretic shard intersection order of $L$. Given an edge labeling $\lambda: \text{Cov}(L) \to P$, one defines \[ \lambda_{\downarrow}(x) := \{\lambda(y,x): \ y\lessdot x\}, \ \ \ \lambda^{\uparrow}(x):= \{\lambda(x,z): \ x \lessdot z\}. \]

\begin{lemma}\cite[Lemma 2.6]{garver2016oriented}\label{cu-label_ji_mi}
Let $L$ be a congruence-uniform lattice with CU-labeling $\lambda:\Cov(L)\ra P$. For any $s\in \lambda(\Cov(L))$, there is a unique join-irreducible $j \in \text{JI}(L)$ (resp., meet-irreducible $m \in \text{MI}(L)$) such that $\lambda(j_*,j) = s$ (resp., $\lambda(m,m^*) = s$). Moreover, this join-irreducible $j$ (resp., meet-irreducible $m$) is the minimal (resp., maximal) element of $L$ such that $s \in \lambda_\downarrow(j)$ (resp., $s \in \lambda^\uparrow(m)$).
\end{lemma}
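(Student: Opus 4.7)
Since the axioms CN1--CN3 and CU1--CU2 are self-dual (they are required to hold in $L$ \emph{and} in $L^*$), the meet-irreducible statement is obtained by running the join-irreducible argument in $L^*$. I will therefore focus on the join-irreducible case and split it into three steps: uniqueness, constructing a candidate by minimization, and verifying that the candidate is join-irreducible.

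\textbf{Step 1 (Uniqueness).} If $j, j' \in \JI(L)$ both satisfy $\lambda(j_*,j) = s = \lambda(j'_*,j')$, then CU1 forces $j = j'$. Thus at most one such $j$ exists.

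\textbf{Step 2 (Candidate via minimization).} Because $s \in \lambda(\Cov(L))$, the set
$$S_s := \{\, y \in L : s \in \lambda_\downarrow(y)\,\}$$
is nonempty. Let $j$ be a minimal element of $S_s$, and fix a lower cover $j_* \lessdot j$ with $\lambda(j_*,j) = s$. If I can show $j$ is join-irreducible, then $j$ exists (Step 2), is unique (Step 1), and is automatically the minimum of $S_s$ (since any other minimal element would be a second join-irreducible with label $s$, contradicting Step 1). This proves the ``moreover'' clause.

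\textbf{Step 3 (Join-irreducibility of $j$).} This is the main obstacle. Suppose, toward a contradiction, that $j$ has a second lower cover $j_1 \ne j_*$. Set $u := j_* \wedge j_1$; then $j_* \vee j_1 = j$. The goal is to produce a covering relation $(w,v)$ with $v < j$ and $\lambda(w,v) = s$, contradicting the minimality of $j$ in $S_s$.

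In the ``diamond case'' $u \lessdot j_*$ and $u \lessdot j_1$, apply CN1 to the triple $(u, j_*, j_1)$ with $j_* \vee j_1 = j$: the penultimate element $x'$ of any maximal chain from $u$ to $j$ through $j_*$ is $j_*$ itself (since $j_* \lessdot j$), so CN1 yields $\lambda(u, j_1) = \lambda(j_*, j) = s$, placing $j_1 \in S_s$ with $j_1 < j$, the required contradiction.

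In the general case where $u$ is not directly covered by both $j_*$ and $j_1$, the plan is to ``walk down'' along maximal chains $C_1$ (through $j_*$) and $C_2$ (through $j_1$) in $[u,j]$, applying CN1 and CN2 to a descending sequence of sub-polygons with apex $j$. Concretely, let $x_1 \in C_1$ be the element immediately below $j_*$ and $y_1 \in C_2$ the element immediately below $j_1$; one then replaces the top cover $(j_*,j)$ on $C_1$ by the cover $(x_1, j_*)$ and analyzes how its label propagates via CN1 across the interval $[x_1 \wedge y_1, j_*]$ (using CN3 to guarantee all labels on a chain are distinct, and CN2 to order labels coherently). This is the hardest part: the argument reduces the length of a max chain in the relevant interval at each stage, so finiteness ensures termination at the diamond case handled above. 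Once the diamond case triggers, we obtain the required covering relation with label $s$ strictly below $j$, contradicting minimality.

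\textbf{Step 4 (Conclusion).} Steps 1--3 together give a unique $j \in \JI(L)$ with $\lambda(j_*,j) = s$, and $j$ is the minimum of $S_s$. Running the entire argument in $L^*$ yields the dual statement for meet-irreducibles.
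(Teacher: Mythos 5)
Steps 1 and 2 of your argument are fine, and the reduction to ``a minimal element of $S_s$ is join-irreducible'' is the right shape. Step 3 is where the proof is incomplete: the diamond case is handled correctly with CN1, but the general case is only a sketch. The ``walk down'' idea is not a proof --- you never specify the quantity that decreases, nor explain why the label $s$ (which sits on the \emph{top} cover $(j_*,j)$) persists as you replace $(j_*,j)$ by lower covers such as $(x_1,j_*)$. As written, those lower covers need not carry the label $s$, so the recursion does not deliver a cover with label $s$ strictly below $j$.

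The general case in fact has a one-step fix that makes the diamond/non-diamond split unnecessary: recall that the CN axioms are required to hold both in $L$ and in $L^*$, and apply CN1 \emph{in $L^*$} directly at the element $j$. Concretely, with $u:=j_*\wedge j_1$, take maximal chains $C_1,C_2$ in the interval $[u,j]$ with $j_*\in C_1$ and $j_1\in C_2$. Viewed in $L^*$, the element $j$ plays the role of $z$, its upper covers in $L^*$ are $j_*$ and $j_1$, and $j_*\vee_{L^*}j_1=u$. Dual CN1 then says that if $y'$ is the element covering $u$ on $C_2$, then
\[
\lambda(u,y')=\lambda(j_*,j)=s.
\]
Since $y'\le j_1<j$, we get $y'\in S_s$ with $y'<j$, contradicting minimality of $j$ --- in every case, not just the diamond one. (Your diamond case is exactly the special case $y'=j_1$.) So the result is correct and your skeleton is salvageable, but Step 3 needs to be replaced by this dual-CN1 argument; the ``walk down by CN1/CN2'' as stated is a gap, not a proof.

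One small further point: once Step 3 is repaired, your Step 2 argument that $j$ is \emph{the} minimum of $S_s$ (not merely minimal) is correct, because any minimal element of $S_s$ is then join-irreducible with label $s$, and CU1 forces uniqueness. Your reduction of the meet-irreducible statement to the join-irreducible one by duality is also fine.
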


We will use Lemma~\ref{cu-label_ji_mi} to characterize join- and meet-irreducible elements of $\text{Bic}(A)$, the lattice of biclosed sets of strings defined in Section~\ref{sec:biclosed}.

One also uses CU-labelings to determine canonical join representations and canonical meet representations of elements of a congruence-uniform lattice. We state this precisely as follows.

\begin{lemma}\cite[Proposition 2.9]{garver2016oriented}\label{Lemma_cjr_cmr}
Let $L$ be a congruence-uniform lattice with CU-labeling $\lambda$. For any $x\in L$, the canonical join representation of $x$ is $\bigvee D$, where $D = \{j \in \text{JI}(L): \ \lambda(j_*,j) \in \lambda_\downarrow(x)\}$. Dually, for any $x\in L$, the canonical meet representation of $x$ is $\bigwedge U$, where $U = \{m \in \text{MI}(L): \ \lambda(m,m^*) \in \lambda^\uparrow(x)\}$.
\end{lemma}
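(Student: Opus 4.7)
The plan is to prove the canonical join representation claim; the canonical meet representation follows dually by applying the same argument to $L^*$. Since congruence-uniform lattices are semidistributive, \cite[Theorem 2.24]{freese1995free} ensures that $x$ admits a unique canonical join representation $x = \bigvee A$ with $A \subseteq \JI(L)$, and the goal is to show $A = D$.

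One direction is straightforward: for any $j \in D$, the label $\lambda(j_*, j) \in \lambda_\downarrow(x)$ combined with Lemma~\ref{cu-label_ji_mi} (which says $j$ is the minimal element of $L$ whose $\lambda_\downarrow$ contains this label) yields $j \leq x$, so $\bigvee D \leq x$. The substantive inclusion is $A \subseteq D$. Fix $a \in A$ and put $y := \bigvee(A \setminus \{a\})$; irredundancy of the canonical join representation gives $y < x$ and $y \vee a = x$. Consider the interval from $y \wedge a$ up to $x$: one maximal chain passes through $a_* \lessdot a$ with label $\lambda(a_*, a)$, while another ascends through $y \lessdot \cdots \lessdot x$. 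Applying axiom CN1 inductively to each rank-two square encountered along these chains transports the label $\lambda(a_*, a)$ to $\lambda(y', x)$ for some cover $y' \lessdot x$. Hence $\lambda(a_*, a) \in \lambda_\downarrow(x)$, so $a \in D$.

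To upgrade $A \subseteq D$ to equality, I would invoke CU1, which forces the map $j \mapsto \lambda(j_*, j)$ to be an injection from $\JI(L)$ into the label poset $P$. Running the label-propagation argument in the opposite direction—starting from a cover $y'' \lessdot x$ and descending via CN1 along a maximal chain into the canonical join representation—produces some $a' \in A$ with $\lambda((a')_*, a') = \lambda(y'', x)$, showing that the restriction of $j \mapsto \lambda(j_*, j)$ to $A$ is surjective onto $\lambda_\downarrow(x)$. Thus $|A| = |\lambda_\downarrow(x)| = |D|$, and combined with $A \subseteq D$ this forces $A = D$; then $\bigvee D = \bigvee A = x$ is the canonical join representation. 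The main obstacle I expect is the CN1 label-propagation step: since $L$ need not be graded, one cannot apply CN1 in a single step but must iterate it along maximal chains in non-uniform intervals, using CN3 to guarantee that the label $\lambda(a_*, a)$ remains uniquely traceable throughout the induction.
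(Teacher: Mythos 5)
This lemma is cited from \cite[Proposition 2.9]{garver2016oriented} rather than proved in the paper, so there is no internal proof to compare against; the evaluation below is of your sketch on its own terms.

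Your overall plan is the right one: use Lemma~\ref{cu-label_ji_mi} to get $\bigvee D \le x$, show that each canonical joinand $a\in A$ satisfies $\lambda(a_*,a)\in\lambda_\downarrow(x)$ (so $A\subseteq D$), and then close the gap by a counting argument using CU1. The first step is clean. The genuine gap is the label-propagation step. You say ``applying axiom CN1 inductively to each rank-two square encountered along these chains transports the label $\lambda(a_*,a)$ to $\lambda(y',x)$.'' But CN1 is not a statement about arbitrary rank-two squares: it only applies to an interval of the form $[z,\, u\vee v]$ where both $u$ and $v$ \emph{cover} $z$, and it propagates labels to the covers of the top element $u\vee v$, which in general is strictly below $x$. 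To iterate, you must construct a specific sequence of such intervals whose tops climb to $x$, and in a non-graded lattice it is not automatic that the two chains you start with (one through $a_*\lessdot a$, one through $y$) decompose into a ladder of CN1-shaped cells carrying the same label all the way up; CN3 gives distinctness of labels along a single chain, not traceability across intervals. Your phrase ``uniquely traceable'' is doing a lot of unjustified work here. A cleaner route that avoids this bookkeeping: show first that for the canonical joinand $a$ there is a unique cover $z\lessdot x$ with $a\not\le z$, that $z\wedge a=a_*$ and $z\vee a=x$, and then argue directly at the level of congruences that $\operatorname{con}(a_*,a)=\operatorname{con}(z,x)$ (each forces the other via meeting/joining with $a$ and $z$); since a CU-labeling assigns equal labels to cover pairs generating the same join-irreducible congruence, this immediately yields $\lambda(z,x)=\lambda(a_*,a)$ and hence $a\in D$.

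The second half of your counting argument has a similar soft spot: you assert that descending from an arbitrary $y''\lessdot x$ via CN1 produces some $a'\in A$ with the matching label, but this is again the same unproven propagation claim run in reverse, and injectivity of the resulting map $\lambda_\downarrow(x)\to A$ is needed but not addressed. Once the congruence identification above is in place, this direction also becomes immediate: $|D|=|\lambda_\downarrow(x)|$ by CU1 and Lemma~\ref{cu-label_ji_mi}, the map $a\mapsto z_a$ from canonical joinands to covers of $x$ is a bijection in any finite semidistributive lattice, and the label argument shows $a\mapsto\lambda(a_*,a)$ hits every label in $\lambda_\downarrow(x)$, forcing $A=D$.
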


\begin{definition}
Let $L$ be a finite congruence-uniform lattice with CU-labeling $\lambda: \Cov(L) \to P.$ Let $x \in L$, and let $\lambda_\downarrow(x) = \{y_1, \ldots, y_k\}$. Define the \textit{shard intersection order} of $L$, denoted $\Psi(L)$, to be the collection of sets of the form $$\psi(x) := \{\lambda(w,z) | \wedge_{i = 1}^k y_i \le w \lessdot z \le x\}$$ partially ordered by inclusion. 
\end{definition}

\section{Representation theory preliminaries}\label{sec:representation_prelim}

\subsection*{Notations and Conventions}

Throughout, $k$ denotes a field, $\Lambda$ a $k$-algebra, and $\modu(\Lambda)$ the category of all finitely generated left $\Lambda$-modules. 
For a subcategory $\mathcal{C}$ of $\modu(\Lambda)$, we always assume $\mathcal{C}$ is full and closed under isomorphism.
We let $\ind(\Lambda)$ denote the set of all isomorphism classes of indecomposable modules in $\modu(\Lambda)$. For every $M$ in $\modu(\Lambda)$, we denote the Auslander-Reiten translation of $M$ by $\tau_{\Lambda} M$.

A \textit{quiver} $Q=(Q_0,Q_1,s,t)$ is a directed graph, which consists of two sets $Q_0$ and $Q_1$ and two functions $s, t:Q_1\to Q_0$. Elements of $Q_0$ and $Q_1$ are called \textit{vertices} and \textit{arrows} of $Q$, respectively. For $\gamma \in Q_1$, the vertex $s(\gamma)$ is its \textit{source} and $t(\gamma)$ is its \textit{target}. We will assume that $Q$ is finite and connected. We typically use lower case Greek letters $\alpha$, $\beta$, $\gamma$, $\ldots$ for arrows of $Q$. 

A \emph{path of length} $d\geq 1$ in $Q$ is a finite sequence of arrows $\gamma_{d}\cdots\gamma_{2}\gamma_{1}$ such that $s(\gamma_{j+1})=t(\gamma_{j})$, for every $1 \leq j \leq d-1$. We also associate to each vertex $i \in Q_0$ a path of length $0$, denoted $e_i$, called the \textit{lazy path}. Each lazy path $e_i$ satisfies $s(e_i) = t(e_i) = i.$ The \emph{path algebra} of $Q$, denoted $kQ$, is generated by the set of all such paths and all of the lazy paths as a $k$-vector space. Its multiplication is induced by concatenation of paths and extended to $kQ$ by linearity. Let $R_Q \subseteq kQ$ denote the two-sided ideal generated by all arrows of $Q$. A two-sided ideal $I \subseteq kQ$ is called \emph{admissible} if $R_Q ^m \subseteq I \subseteq R_Q^2$, for some $m\geq 2$.

More details on the representation theory of associative algebras that appears in this paper may be found in \cite{ASS}.

\subsection{Gentle Algebras}\label{String Algebra}
In this subsection, we recall some basic notions about gentle algebras, which are used in the remainder of the paper. For further details we refer the reader to \cite{BR}. 

A finite dimensional algebra $\Lambda = kQ/I$, where $I$ is an admissible ideal generated by a set of paths, is called a \emph{string algebra} if the following conditions hold:

\begin{enumerate}[(S1)]
\item At every vertex $v \in Q_0$, there are at most two incoming and two outgoing arrows.
\item For every arrow $\alpha \in Q_1$, there is at most one arrow $\beta$ and one arrow $\gamma$ such that $\alpha\beta \notin I$ and $\gamma\alpha \notin I$.
\end{enumerate}

Moreover, $\Lambda = kQ/I$ is called \emph{gentle}, if it also satisfies the following:

\begin{enumerate}[(G1)]
\item There is a set of paths of length two that generate $I$.
\item For each arrow $\alpha \in Q_1$, there is at most one $\beta$ and one $\gamma$ such that $0\ne\alpha\beta \in I$ and $0\ne\gamma\alpha \in I$.
\end{enumerate}

Unless otherwise stated, given a finite dimensional algebra $\Lambda = kQ/I$, we assume that $I$ is an admissible ideal generated by a set of paths.

\subsection*{Strings and Band Modules}
Let $\Lambda=kQ/I$ and $Q_1^{-1}$ be the set of formal inverses of arrows of $Q$. Elements of $Q_1^{-1}$ are denoted by $\gamma^{-1}$, where $\gamma \in Q_1$, such that $s(\gamma^{-1}):=t(\gamma)$ and $t(\gamma^{-1}):=s(\gamma)$.
A \emph{string} in $\Lambda$ of length $d\geq 1$ is a word $w = \gamma_d^{\epsilon_d}\cdots\gamma_1^{\epsilon_1}$ in the alphabet $Q_1 \sqcup Q_1^{-1}$ with $\epsilon_i \in \{\pm 1\}$, for all $i \in \{1,2,\cdots,d\}$, which satisfies the following conditions:

\begin{enumerate}
\item[(P1)] $s(\gamma_{i+1}^{\epsilon_{i+1}})=t(\gamma_i^{\epsilon_i})$ and $ \gamma_{i+1}^{\epsilon_{i+1}} \neq \gamma_i^{-\epsilon_i}$, for all $i\in \{1,\cdots,d-1 \}$;
\item[(P2)] $w$ and also $w^{-1} := \gamma_1^{-\epsilon_1}\cdots\gamma_d^{-\epsilon_d}$ do not contain a subpath in $I$.
\end{enumerate}

If $w = \gamma_d^{\epsilon_d}\cdots\gamma_1^{\epsilon_1}$ and we know that $\epsilon_i = 1$ for some $i$, we write $\gamma_i$ rather than $\gamma_i^{1}$. We say $w$ \textit{starts} at $s(w)=s(\gamma_1^{\epsilon_1})$ and \textit{terminates} at $t(w)=t(\gamma_d^{\epsilon_d})$. We also associate a zero-length string to every vertex $i \in Q_0.$ We denote this string by $e_i$. We let $\Str(\Lambda)$ denote the set of strings in $\Lambda$ where a string $w$ is identified with $w^{-1}$ for reasons that will become clear later. 


Let $w = \gamma_d^{\epsilon_d}\cdots\gamma_1^{\epsilon_1}$ be in $\Str(\Lambda)$. Then, $w$ is called \emph{direct} if $\epsilon_i=1$ for all $i\in \{1,\cdots,d \}$, while inverse strings are defined dually. We say that a string $w$ of positive length is a \emph{cyclic} string if $s(w) = t(w)$. If $w$ is a cyclic string, it is called a \emph{band} if $w^m$ is a string for each $m \in \mathbb{Z}_{\geq 1}$ and $w$ is not a power of string of a strictly smaller length.

Let $w =  \gamma_d^{\epsilon_d}\cdots \gamma_1^{\epsilon_1}$ be an element of $\Str(\Lambda)$. We can express the walk on $Q$ determined by the string $w$ as the sequence \xymatrix{x_{d+1} \ar@{-}^{\gamma_d}[r] & x_d \ar@{-}^{\gamma_{d-1}}[r] & \cdots & x_1 \ar@{-}_{\gamma_1}[l]} where $x_1,\ldots, x_{d+1}$ are the vertices of $Q$ visited by $w$, \emph{a priori} multiple times. Each arrow $\gamma_i$ has an orientation that we suppress in this notation, but the orientation of these arrows appears in the definition of the string module defined by $w$. The \textit{string module} defined by $w$ is the quiver representation $ M(w) := ((V_i)_{i \in Q_0}, (\varphi_\alpha)_{\alpha\in Q_1})$ with vector spaces given by
$$\begin{array}{cccccccccccc}
V_i & := & \left\{\begin{array}{ccl} \displaystyle\bigoplus_{j: x_j = i}kx_j &: & \text{if } i = x_j \text{ for some } j \in \{1,\ldots, d+1\}\\ 0 & : & \text{otherwise} \end{array}\right.
\end{array}$$ for each $i \in Q_0$ and with linear transformations given by
$$\begin{array}{cccccccccccc}
\varphi_\alpha(x_k) & := & \left\{\begin{array}{lcl} x_{k-1} &: & \text{if } \alpha = \gamma_{k-1} \text{ and } \epsilon_k = -1\\ x_{k+1} &: & \text{if } \alpha = \gamma_{k} \text{ and } \epsilon_k = 1\\  0 & : & \text{otherwise} \end{array}\right.
\end{array}$$ for each $\alpha \in Q_0$. Observe that $\dim_k(V_i) = |\{j \in \{1,\ldots, d+1\}: \ x_j = i\}|$ for any $i \in Q_0$. Observe that for any string $w$ we have that $M(w) \cong M(w^{-1})$ as $\Lambda$-modules.

As shown in \cite{wald1985tame}, all of the indecomposable modules over a string algebra are given by string modules and another class called \emph{band modules}. As band modules will not be relevant in this work, we do not define them, instead we refer the interested reader to \cite{BR}. 

The \emph{diagram} of $w$ is a pictorial presentation of $M(w)$ that consists of a sequence of up and down arrows, drawn from right to left. In particular, starting from vertex $s(w)$, for every direct arrow we put a left-down arrow outgoing from the current vertex, whereas for each inverse arrow we put a right-down ending at the current vertex. These notions, as well as the construction of a string module, are illustrated in the following example.

\begin{example}
Let $(Q,I)$ denote the bound quiver where $Q$ appears in Figure~\ref{fig_gentle_quiver} and $I = \langle\beta\alpha \rangle$. Since $R_Q^4 = 0$, the zero ideal is admissible. Furthermore, the ideal $I$ generated by the quadratic relation $\beta \alpha$ is also an admissible ideal, for which the quotient algebra $\Lambda=kQ/I$ is gentle.

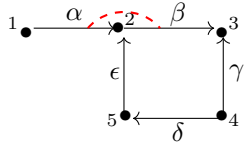
\begin{figure}[!htbp]
\begin{center}
\begin{tikzpicture}[scale=0.6]
\draw [->] (-2.2,0) --(-0.4,0);
\node at (-1.3,0.3) {$\alpha$};
\node at (-2.5,0) {$^1\bullet$};
\draw [->] (0.2,0) --(1.8,0);
\node at (1,0.3) {$\beta$};
\node at (-0.2,0.1) {$\bullet^2$};
\draw [->] (-0.2,-1.9) -- (-0.2,-0.2);
\node at (-0.3,-2) {$_5\bullet$};
\node at (-0.4,-1) {$\epsilon$};
\draw [->] (2,-2) --(2,-0.2) ;
\node at (2.1,-2) {$\bullet_4$};
\node at (2.3,-1) {$\gamma$};
\draw [->] (1.9,-2) --(0,-2);
\node at (1,-2.3) {$\delta$};
\node at (2.1,0) {$\bullet^3$};
-------
\draw[] (-.2,0)--(0.2,0);
  \draw [thick, dashed] [color=red] (-1,0) to [bend left=50] (0.6,0);
\end{tikzpicture}
\end{center}
\caption{A bound quiver $(Q,I)$ where $\Lambda = kQ/I$ is gentle. The red dashed arc depicts the principal generator of $I$.}
\label{fig_gentle_quiver}
\end{figure}


Observe that $w =\alpha^{-1}\epsilon \delta \gamma^{-1}\beta$ is a string in $\Str(\Lambda)$. The diagram of $w$ and the string module $M(w)$ appear in Figure~\ref{fig_diag_and_module}.

\begin{figure}[!htbp]
$$\begin{array}{ccccccc}
\begin{tikzpicture}[scale=0.67]


-------------------------
-------------------------
\node at (0.1,0.1) {$\bullet^2$};
\draw [->] (0,0) -- (-1,-1);
\node at (-0.25,-0.5) {$\beta$};
\node at (-0.9,-1.1) {$\bullet_3$};
--
\draw [->] (-2,0) -- (-1.05,-1);
\node at (-1.25,-0.2) {$\gamma^{-1}$};
\node at (-2.15,0.15) {$^4\bullet$};
--
\draw [->] (-2.1,0) -- (-3,-1);
\node at (-2.25,-0.5) {$\delta$};
\node at (-2.9,-1.1) {$\bullet_5$};
--
\draw [->] (-3.1,-1.07) -- (-4,-2.05);
\node at (-3.25,-1.5) {$\epsilon$};
\node at (-3.9,-2.12) {$\bullet_2$};
--
\draw [->] (-5,-1) -- (-4.05,-2);
\node at (-4.25,-1.2) {$\alpha^{-1}$};
\node at (-5.1,-.95) {$^1\bullet$};
---------------------------------
\end{tikzpicture} & &
\begin{tikzpicture}[scale=0.67]
\draw [->] (5.8,0) --(7.6,0);
\node at (6.6,-.7) {$\begin{bmatrix}
    0 \\
    1 \\
    \end{bmatrix}$};
\node at (5.5,-0.1) {$\bullet$};
\node at (5.5,0.3) {$k$};
\draw [->] (7.8,0) -- (9.8,0);
\node at (9,0.4) {$\begin{bmatrix}
    1 & 0\\
    \end{bmatrix}$};
\node at (7.7,-0.1) {$\bullet$};
\node at (7.9,0.4) {$k^2$};
\draw [->] (7.7,-1.9) -- (7.7,-0.2);
\node at (7.7,-2.1) {$\bullet$};
\node at (7.7,-2.5) {$k$};
\node at (7.3,-1.2) {$\begin{bmatrix}
    0 \\
    1 \\
    \end{bmatrix}$};
\draw [->] (10,-2) --(10,-0.2) ;
\node at (10,-2.1) {$\bullet$};
\node at (10.1,-2.5) {$k$};
\node at (10.3,-1) {$1$};
\draw [->] (9.9,-2.1) --(8,-2.1);
\node at (9,-2.4) {$1$};
\node at (10,0) {$\bullet$};
\node at (10.1,0.4) {$k$};

\end{tikzpicture} \\ (a) & & (b)
\end{array}$$
\caption{In $(a)$, we show the diagram of $w =\alpha^{-1}\epsilon \delta \gamma^{-1}\beta$, and, in $(b)$, we show the string module $M(w)$.}
\label{fig_diag_and_module}
\end{figure}
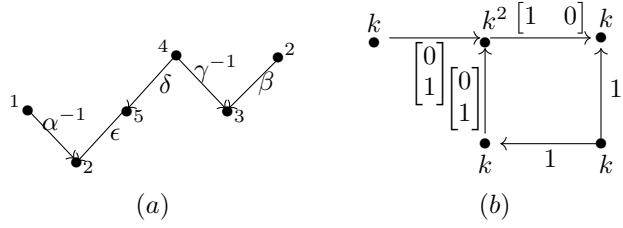

\end{example}

\subsection{Torsion theories}
Following the seminal work of Dickson \cite{Di}, a subcategory of $\modu(\Lambda)$ is called a \emph{torsion class} if it is closed under quotients and extensions. We say a torsion class $\mathcal{T}$ is {\it functorially finite} if $\mathcal{T}=\Gen (M)$, for some $\Lambda$-module $M$, where $\Gen (M)$ denotes the subcategory of $\modu(\Lambda)$ \emph{generated} by $M$ (i.e., the subcategory consisting of all quotients of direct sums of $M$).

Dually, a {\emph{torsion-free class}} is defined as a subcategory of $\modu(\Lambda)$ that is closed under submodules and extensions. 
Furthermore, for a subcategory $\mathcal{C}$ of $\modu(\Lambda)$, if we define
$$\mathcal{C}^\bot:= \{X\in \modu(\Lambda) \,| \, \Hom_\Lambda(C,X)=0, \, \forall C\in \mathcal{C}\},$$
then it is easy check that $\mathcal{F}:= \mathcal{T}^{\bot}$ is a torsion free class, provided that $\mathcal{T}$ is a torsion class. In such a case, $(\mathcal{T}, \mathcal{F})$ is called a \emph{torsion pair} or \emph{torsion theory} in $\modu(\Lambda)$.

Let $\tors(\Lambda)$ and $\torf(\Lambda)$, respectively, denote the set of all torsion classes and torsion free classes in $\modu(\Lambda)$, ordered by inclusion. It is straightforward to show these are complete lattices where the meet of a family of torsion classes $\{\mathcal{T}_i\}_{i \in I} \in \tors(\Lambda)$ (resp., $\{\mathcal{F}_i\}_{i \in I} \in \torf(\Lambda)$) is given by $\bigwedge_{i\in I}\mathcal{T}_i = \bigcap_{i \in I}\mathcal{T}_i$ (resp., $\bigwedge_{i\in I}\mathcal{F}_i = \bigcap_{i \in I}\mathcal{F}_i$). Moreover, these lattices are closely related via an anti-isomorphism of lattices by sending $\mathcal{T}$ to $\mathcal{T}^{\bot}$ (and $\mathcal{F}$ to $^{\bot}\mathcal{F}$ in the opposite direction), where $$ ^{\bot}\mathcal{C} := \{X \in \modu(\Lambda) \,| \, \Hom_\Lambda(X,C) = 0, \, \forall C \in \mathcal{C}\}$$ for each subcategory $\mathcal{C}$ of $\modu(\Lambda)$.

The following proposition will be useful in the following sections, as it describes the smallest torsion class in $\modu(\Lambda)$ containing a given set of modules. Later we use a refinement of this proposition for a combinatorial description of torsion classes over gentle algebras. Recall that for a subcategory $\mathcal{C}$ of $\modu(\Lambda)$, the smallest extension-closed subcategory of $\modu(\Lambda)$ that contains $\mathcal{C}$ consists of all modules in $\modu(\Lambda)$ which have a filtration by the objects in $\mathcal{C}$. We denote this category by $\Filt(\mathcal{C})$.

\begin{proposition}\label{Smallest Torsion Class}

For a collection of $\Lambda$-modules $X_1,\ldots, X_r$, the smallest torsion class in $\tors(\Lambda)$ that contains $\{X_1, \ldots, X_r \}$ is given by $\mathcal{T}^*=\Filt(\Gen(\bigoplus_{i = 1}^rX_i))$. In particular, each $M$ in $\mathcal{T}^*$ has a filtration $0= M_0 \subseteq M_1 \subseteq \cdots \subseteq M_{d-1} \subseteq M_d=M$ such that for every $1 \leq i \leq d$, there exists an epimorphism $\psi_i:X_{j_i}\twoheadrightarrow M_{i}/M_{i-1}$ for some $1 \leq j_i \leq r$.
\end{proposition}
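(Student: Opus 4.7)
Write $X := \bigoplus_{i=1}^r X_i$ and $\mathcal{T}^* := \Filt(\Gen(X))$. The plan is to show (a) each $X_i$ lies in $\mathcal{T}^*$, (b) $\mathcal{T}^*$ is a torsion class, (c) $\mathcal{T}^*$ is contained in every torsion class containing the $X_i$, and (d) the stated filtration exists.

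For (a), each $X_i$ is a direct summand of $X$, hence a quotient of $X$, so $X_i \in \Gen(X) \subseteq \mathcal{T}^*$. For (b), closure under extensions is immediate from the definition of $\Filt$: given a short exact sequence $0\to M'\to M\to M''\to 0$ with filtrations of $M'$ and $M''$ by objects of $\Gen(X)$, concatenating them (via the preimage of the filtration of $M''$ above $M'$) yields a filtration of $M$. The substantive step is closure under quotients. Given $M \in \mathcal{T}^*$ with filtration $0=M_0\subseteq M_1\subseteq\cdots\subseteq M_d=M$ and factors in $\Gen(X)$, and a submodule $N\subseteq M$, set $M_i' := (M_i+N)/N$. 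Then $0 = M_0'\subseteq\cdots\subseteq M_d' = M/N$, and the canonical surjection $M_i/M_{i-1}\twoheadrightarrow (M_i+N)/(M_{i-1}+N) = M_i'/M_{i-1}'$ shows each factor is a quotient of an object in $\Gen(X)$. Since $\Gen(X)$ is itself closed under quotients (a quotient of a quotient of $X^n$ is a quotient of $X^n$), each $M_i'/M_{i-1}'$ lies in $\Gen(X)$, so $M/N\in\mathcal{T}^*$.

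For (c), let $\mathcal{T}$ be any torsion class containing $X_1,\ldots,X_r$. Since $\mathcal{T}$ is additive, it contains $X$; since $\mathcal{T}$ is closed under quotients and under finite direct sums, it contains $\Gen(X)$; and since $\mathcal{T}$ is closed under extensions, it contains $\Filt(\Gen(X))=\mathcal{T}^*$.

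Finally for (d), any $M\in\mathcal{T}^*$ admits a filtration $0=M_0\subseteq\cdots\subseteq M_d=M$ with $M_i/M_{i-1}\in \Gen(X)$. I now refine this filtration. Each factor $Y := M_i/M_{i-1}$ is a quotient of some power $X^n = \bigoplus_{\ell=1}^{n} X_{k_\ell}$ via a surjection $\pi\colon X^n\twoheadrightarrow Y$. Writing $Y_\ell := \pi(X_{k_1}\oplus\cdots\oplus X_{k_\ell})$, I get a filtration $0=Y_0\subseteq Y_1\subseteq\cdots\subseteq Y_n = Y$ in which each successive quotient $Y_\ell/Y_{\ell-1}$ is an image of $X_{k_\ell}$, hence a quotient of $X_{k_\ell}$ as required. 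Pulling these refinements back to $M$ and concatenating them across $i=1,\dots,d$ produces the desired filtration of $M$ whose factors are epimorphic images of individual $X_{j_i}$'s. The main (very mild) obstacle is verifying the refinement step (d); the rest consists of standard manipulations with $\Filt$ and $\Gen$.
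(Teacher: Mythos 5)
Your proof is correct and follows essentially the same strategy as the paper: show $\mathcal{T}^*$ is closed under extensions by concatenating filtrations, closed under quotients by pushing a filtration forward, and minimal by an inductive use of extension-closedness. The one genuine difference is your step (d). The paper asserts, as though it were immediate from the definition of $\Filt(\Gen(\bigoplus X_i))$, that any $M\in\mathcal{T}^*$ already has a filtration whose factors are epimorphic images of \emph{individual} $X_{j_i}$'s, whereas the factors of a $\Filt(\Gen(X))$-filtration are a priori only quotients of finite direct sums $\bigoplus_\ell X_{k_\ell}$. You close this gap cleanly: given $\pi\colon \bigoplus_{\ell=1}^n X_{k_\ell}\twoheadrightarrow Y$, filtering $Y$ by the submodules $Y_\ell=\pi(X_{k_1}\oplus\cdots\oplus X_{k_\ell})$ produces factors $Y_\ell/Y_{\ell-1}$ that are images of the single summand $X_{k_\ell}$, and pulling these refinements back interleaves them into a filtration of $M$ of the required form. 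This is exactly the argument the paper tacitly relies on, so your version is not a different route but a more careful rendering of the same one; everything else (extensions via $g^{-1}(M_i)$ vs.\ your $M_i+N$, quotients via images, and the minimality induction) matches the paper up to notation.
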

\begin{proof}
We prove that $\mathcal{T}^*$ is a torsion class and is contained in any $\mathcal{T} \in \tors(\Lambda)$ which contains the modules $X_1, \ldots, X_r$.

To show the containment, suppose $\mathcal{T} \in \tors(\Lambda)$ and $X_1,\ldots, X_r$ belong to $\mathcal{T}$.
If $M \in \mathcal{T}^*$, by definition it has a filtration 
$$ 0= M_0 \subseteq M_{1} \subseteq \cdots \subseteq M_{d-1} \subseteq M_d=M$$ 
such that for each $1 \leq i \leq d$, there exists an epimorphism $\psi_i:X_{j_i}\twoheadrightarrow M_i/M_{i-1}$ for some $1\leq j_i \leq r$. Note that $M_{1} \in \mathcal{T}$. Now, via an inductive argument and the fact that $\mathcal{T}$ is extension-closed, for any $1\leq i \leq d$, the short exact sequence 
$$ 0 \rightarrow M_{i-1} \rightarrow M_{i} \rightarrow M_{i}/M_{i-1} \rightarrow 0$$
implies $M_i \in \mathcal{T}$. In particular, $M \in \mathcal{T}$ so $\mathcal{T}^* \subseteq \mathcal{T}$.

To show that $\mathcal{T}^*$ is a torsion class, consider $M, N \in \mathcal{T}^*$, respectively, with the following filtrations
$$ 0= M_0 \subseteq M_{1} \subseteq \cdots \subseteq M_{a-1} \subseteq M_a=M$$
and 
$$ 0= N_0 \subseteq N_{1} \subseteq \cdots \subseteq N_{b-1} \subseteq N_b=N$$ 
such that the module epimorphisms $\alpha_i:X_{j_i}\twoheadrightarrow M_i/M_{i-1}$ and $\beta_{i^\prime}:X_{k_t}\twoheadrightarrow N_{i^\prime}/N_{i^{\prime}-1}$ are defined as before, for every $ 1\leq i \leq a$ and $1\leq i^\prime \leq b$.

Suppose we have the following short exact sequence in $\modu(\Lambda)$:
$$ 0 \rightarrow N\xrightarrow{f} Z \xrightarrow{g} M \rightarrow 0.$$
Consider the following filtration of $Z$ with the desired quotient property:
$$0=f(N_0) \subseteq \cdots \subseteq f(N_b)=g^{-1}(M_0) \subseteq \cdots \subseteq g^{-1}(M_a)=Z.$$ 

Using the maps $\alpha_i$ and $\beta_{i^\prime}$ given above, it is straightforward to show that each quotient of two consecutive terms in this filtration of $Z$ is a quotient of $X_k$ for some $1 \le k \le r$. This proves that $\mathcal{T}^*$ is extension-closed.

To see that $\mathcal{T}^*$ is quotient closed, suppose $f: M \twoheadrightarrow N $ is an epimorphism and 
$$0= M_0 \subseteq M_{1} \subseteq \cdots \subseteq M_{d-1} \subseteq M_d=M$$ 
a filtration of $M$ as in the assertion. Now consider the filtration 
$$0= f(M_0) \subseteq f(M_{1}) \subseteq \cdots \subseteq f(M_{d-1}) \subseteq f(M_d)=N$$ 
in which some of the middle terms might be the same. Each map $\psi_i$ from the original filtration gives rise to an epimorphism $\sigma_i:X_{j_i} \twoheadrightarrow f(M_i)/f(M_{i-1})$. Therefore $\mathcal{T}^*$ is a torsion class of $\modu(\Lambda),$ and we are done.
\end{proof}

\section{Brick gentle algebras}\label{sec:gentle_alg_prelim}

Recall that a module $X$ over a $k$-algebra $\Lambda$ is called a \emph{brick} if $\End_{\Lambda}(X)$ is a division ring. We say that $\Lambda$ is a \emph{brick algebra} if every indecomposable $\Lambda$-module is a brick. It is well-known that $X$ is a brick if and only if $\End_{\Lambda}(X)\simeq k$, provided that $k$ is algebraically closed. It follows from \cite[Remark, Lemma 4 in Section 3]{bongartz1991geometric} that any brick algebra is of finite representation type.


In this section, we classify the gentle algebras that are brick algebras. For the remainder of the paper, we will refer to such algebras as \emph{brick gentle algebras}. We show that all strings in such bound quivers are \emph{self-avoiding}, meaning that no string revisits a vertex. In particular, over brick gentle algebras, the sets of string modules, bricks, and indecomposable $\tau$-rigid modules coincide. 


Recall that a $\Lambda$-module $M$ is called \emph{rigid} (resp., $\tau$-rigid) if $\Ext^1_{\Lambda}(M,M)=0$ (resp., $\Hom(M,\tau M) = 0$). Here $\tau$ denotes the \textit{Auslander-Reiten translation}. From the functorial isomorphism
$\Ext^1_{\Lambda}(Y,X) \simeq D \overline{\Hom}_{\Lambda}(X,\tau_{\Lambda} Y) $, known as \emph{Auslander-Reiten duality}, it is follows that every $\tau$-rigid module is rigid. 

To avoid repetition, we fix some notation that will be used throughout this section. Let $A$ denote a gentle algebra with fixed bound quiver $(Q,I)$. Let $w= \gamma^{\epsilon_d}_d\cdots\gamma^{\epsilon_2}_2\gamma^{\epsilon_1}_1$ be in $\Str(A)$, with $\gamma_i \in Q_1$ and $\epsilon_i \in \{\pm 1\}$, for every $1 \leq i \leq d$. 
We say that $\gamma_j\gamma_i$ and $\gamma_i^{-1}\gamma_j^{-1}$ is a \emph{relation}, if $\gamma_j\gamma_i$ is a path of length two in $Q$ which belongs to $I$. By $\Brick(A)$ and $\tau \text{-rigid} (A)$, we respectively denote the set of bricks and $\tau_A$-rigid modules in $\operatorname{mod}(A)$. 

The next lemma gives a simple criterion for showing that the string modules defined by certain cyclic strings are not bricks. In particular, it shows that if a bound quiver of an algebra $\Lambda$ contains a cyclic string of odd length, the set $\ind(\Lambda)\backslash\Brick(\Lambda)$ is nonempty.


\begin{lemma}\label{Non-brick}
Let $w= \gamma^{\epsilon_d}_d\cdots\gamma^{\epsilon_2}_2\gamma^{\epsilon_1}_1$ be a cyclic string so that $s(w)=t(w)$. If there exists $1\leq i \leq d-1$ such that $\epsilon_{i}=\epsilon_{i+1}$ or $\epsilon_{d}=\epsilon_{1}$, then $M(w)$ is not a brick. In particular, if $\gamma_1 \gamma_d$ is a relation, $w$ is not a brick.
\begin{proof}
Assume that $\epsilon_{d}=\epsilon_{1}$, and let $j := t(w)=s(w)$. Consider $f\in \End_A(M(w))$, given by $f=f_2 \circ f_1$, where $f_1:M(w) \rightarrow M(e_j)$ (resp., $f_2:M(e_j) \rightarrow M(w)$) is the surjection onto (resp., injection from) the simple module $M(e_j)$. Obviously, $f$ is nonzero and not invertible, which implies that $M(w)$ is not a brick. The proof for the other case is analogous.
\end{proof}
\end{lemma}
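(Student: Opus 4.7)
The plan is to exhibit a nonzero, non-invertible endomorphism of $M(w)$ by producing one that factors through a well-chosen simple module $M(e_v)$. Any composition $M(w) \to M(e_v) \to M(w)$ has one-dimensional image, so provided $\dim_k M(w) \geq 2$ (which holds for every nontrivial cyclic string, since the repeated vertex $j := s(w) = t(w)$ contributes two distinct basis vectors to $V_j$), such a composite will automatically fail to be an isomorphism; the entire content of the argument is then to confirm that it is nonzero.

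The main case I would treat is $\epsilon_d = \epsilon_1$, which by self-duality reduces to $\epsilon_d = \epsilon_1 = 1$. Under this hypothesis the two boundary positions $x_1$ and $x_{d+1}$ at vertex $j$ behave oppositely: $x_1$ has only the outgoing neighbour $\gamma_1$ in the walk of $w$, so no arrow action maps anything into $x_1$, while dually $x_{d+1}$ has only the incoming neighbour $\gamma_d$, so no arrow action maps $x_{d+1}$ anywhere. These extremality properties let me define $f_1 : M(w) \twoheadrightarrow M(e_j)$ by projecting $V_j$ onto the line spanned by $x_1$ (with zero on every other basis vector) and $f_2 : M(e_j) \hookrightarrow M(w)$ by sending the generator of the simple to $x_{d+1}$. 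Both maps are $A$-linear precisely because nothing flows into $x_1$ and nothing flows out of $x_{d+1}$, and their composite will send $x_1$ to $x_{d+1}$ while killing every other basis vector; this is manifestly nonzero, so $M(w)$ is not a brick.

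For the middle case $\epsilon_i = \epsilon_{i+1}$, I would adapt the same construction using the simple at $v := x_{i+1}$, or equivalently cyclically rotate $w$ to $w' = \gamma_i^{\epsilon_i} \cdots \gamma_1^{\epsilon_1} \gamma_d^{\epsilon_d} \cdots \gamma_{i+1}^{\epsilon_{i+1}}$, whose extremal signs $\epsilon_{i+1}$ and $\epsilon_i$ now coincide by hypothesis, and then apply the previous case. The ``in particular'' clause is then immediate: if $\gamma_1 \gamma_d$ lies in $I$, it is a fortiori a composable length-two path, which forces $t(\gamma_d) = s(\gamma_1)$ with $\epsilon_d = \epsilon_1 = 1$, placing us back in the main case.

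The main obstacle I foresee is the careful verification that $f_1$ and $f_2$ are genuine $A$-module homomorphisms, which reduces to showing that no arrow of $A$ acts nontrivially with target $x_1$ or source $x_{d+1}$; this uses the boundary status of these positions in the walk together with the gentleness axioms bounding composable paths avoiding $I$. In the middle case there is a further subtlety, namely confirming that the rotated word $w'$ remains a valid string at the new join between $\gamma_d^{\epsilon_d}$ and $\gamma_1^{\epsilon_1}$ — i.e.\ that conditions (P1) and (P2) persist.
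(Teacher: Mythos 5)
Your main-case argument (for $\epsilon_d = \epsilon_1$, after reducing to $\epsilon_d = \epsilon_1 = 1$ using $M(w)\cong M(w^{-1})$) is correct and is precisely the paper's proof with the maps $f_1$, $f_2$ made explicit: $x_1$ is a ``source'' position (no arrow of the walk lands on it) and $x_{d+1}$ is a ``sink'' position (no arrow leaves it), so projecting $V_j$ onto $kx_1$ gives $f_1$, embedding via $1\mapsto x_{d+1}$ gives $f_2$, and $f_2\circ f_1$ is a nonzero non-invertible endomorphism of $M(w)$.

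Your treatment of the middle case $\epsilon_i=\epsilon_{i+1}$ with $1\le i\le d-1$ does not work, and the flaw is more fundamental than the subtlety you flag. Cyclic rotation is not an equivalence at the level of string modules: if $w'$ is a nontrivial cyclic rotation of $w$ then, even when $w'$ is a valid string, $M(w')$ and $M(w)$ are in general nonisomorphic and have different dimension vectors --- only \emph{band} modules are rotation-invariant --- so an endomorphism of $M(w')$ proves nothing about $\End(M(w))$. Your fallback at $x_{i+1}$ fails too: with $\epsilon_i=\epsilon_{i+1}$, one of $\gamma_i,\gamma_{i+1}$ maps a neighbouring basis vector into $x_{i+1}$ while the other maps $x_{i+1}$ out, so $kx_{i+1}$ is not a submodule of $M(w)$ and neither is its complementary span; there is no simple submodule or simple quotient of $M(w)$ supported at that position. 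In fact the middle case of the lemma as stated is false: on the gentle (hereditary) quiver with arrows $\alpha\colon 1\to 2$, $\beta\colon 2\to 3$, $\gamma\colon 1\to 3$ and $I=0$, the word $w=\gamma^{-1}\beta\alpha$ is a cyclic string with $\epsilon_1=\epsilon_2=1$, $\epsilon_3=-1$, and a direct computation gives $\End_A(M(w))\cong k$, so $M(w)$ is a brick. The paper's ``the proof for the other case is analogous'' conceals the same gap; note, however, that the only downstream use of the lemma (in Proposition~\ref{Ind-Brick Finite}) passes through the ``in particular'' clause with $\gamma_1\gamma_d$ a relation, which forces $\epsilon_d=\epsilon_1$ and lands in the case you did establish correctly.
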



We define a \emph{walk} in a quiver $Q$ of length $d \ge 1$ to be a word $w = \gamma_d^{\epsilon_d}\cdots\gamma_1^{\epsilon_1}$ in the alphabet $Q_1 \sqcup Q_1^{-1}$ with $\epsilon_i \in \{\pm 1\}$, for all $i \in \{1,2,\cdots,d\}$, and which satisfies condition (P1) in the definition of a string in $A$. When working with walks in a quiver, we use analogous notation to that which is used for strings.

\begin{proposition}\label{Ind-Brick Finite}
For a gentle algebra $A=kQ/I$, the following are equivalent:

\begin{enumerate}
\item $A$ is a brick algebra;
\item Every cyclic walk $w = \gamma_d^{\epsilon_d}\cdots\gamma_1^{\epsilon_1}$ in $Q$ contains at least two relations.
\end{enumerate}
Therefore any string $w \in \Str(A)$ where $A$ is any brick gentle algebra is self-avoiding.
\end{proposition}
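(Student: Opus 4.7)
The plan is to prove the equivalence in both directions and then read the ``self-avoiding'' conclusion off $(2)$.

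For $(2)\Rightarrow(1)$, I would first rule out band modules: a band is by definition a cyclic string, hence a cyclic walk with zero relations, which $(2)$ forbids; consequently every indecomposable $A$-module is a string module. Next I would show every $w\in\Str(A)$ is self-avoiding. If $w$ visited some vertex $v$ twice, then the sub-word of $w$ between two consecutive visits to $v$ would be a cyclic walk $w'$; any relation occurring in $w'$ would be a sub-path of $w$ in $I$, violating (P2), so $w'$ has zero relations, contradicting $(2)$. Finally, for self-avoiding $w$ each $V_i$ in $M(w)$ is at most one-dimensional and each arrow used by $w$ induces an identity map between consecutive one-dimensional spaces; thus any $\varphi\in\End_A(M(w))$ is a scalar $c_i$ at each visited vertex, and commutation along the connected walk forces all $c_i$ to agree, so $\End_A(M(w))\simeq k$.

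For $(1)\Rightarrow(2)$, I would argue by contraposition, choosing a cyclic walk $w=\gamma_d^{\epsilon_d}\cdots\gamma_1^{\epsilon_1}$ with at most one relation of \emph{minimal} length and producing a non-brick indecomposable. If $w$ has exactly one relation, a cyclic shift places this relation at the seam, so $\epsilon_d=\epsilon_1$ and the shifted linear word is a cyclic string with no internal relation; Lemma~\ref{Non-brick} then gives a non-brick. If $w$ has zero relations, then $w$ is already a cyclic string. If some internal $\epsilon_i=\epsilon_{i+1}$ holds or $\epsilon_d=\epsilon_1$, Lemma~\ref{Non-brick} again applies. Otherwise the signs alternate strictly and cyclically, so $d$ is even and $\epsilon_d=-\epsilon_1$. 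The only seam obstruction for $w^2$ being a string is then $\gamma_1^{\epsilon_1}=\gamma_d^{-\epsilon_d}$, i.e.\ $\gamma_1=\gamma_d$. If $\gamma_1=\gamma_d$, then $x_2=x_d$ and the sub-walk $\gamma_{d-1}^{\epsilon_{d-1}}\cdots\gamma_2^{\epsilon_2}$ is a strictly shorter cyclic walk inheriting the zero-relation and alternation properties, contradicting minimality. Hence $\gamma_1\ne\gamma_d$; no backtrack or new relation appears at any seam of any $w^m$, so $w^m$ is a string for all $m\ge 1$, and either $w$ itself or a primitive root of $w$ is a band. Band modules of Jordan rank $\ge 2$ arising from this band have endomorphism ring containing $k[t]/t^n$ nontrivially, so are not bricks, and thus $A$ fails to be a brick algebra.

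The main obstacle is the alternating cyclic sub-case: it is not covered by Lemma~\ref{Non-brick}, and $w$ itself need not be a band, failing to be one precisely when the seam backtracks ($\gamma_1=\gamma_d$). Choosing a minimal offending cyclic walk turns the natural descent on $d$ into a single clean contradiction. The final ``Therefore'' in the statement is immediate: by the equivalence a brick gentle algebra satisfies $(2)$, and the self-avoidance argument in the first paragraph then applies verbatim to any $w\in\Str(A)$.
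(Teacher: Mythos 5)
Your proof is correct and takes essentially the same route as the paper's: for $(2)\Rightarrow(1)$, rule out bands and observe that self-avoiding strings give string modules with $\End\simeq k$; for $(1)\Rightarrow(2)$, argue contrapositively via Lemma~\ref{Non-brick} and the existence of a band. Where you go beyond the paper is in carefully justifying two claims the paper leaves as one-liners in the contrapositive direction: the cyclic shift that places the unique relation at the seam (together with the observation that a seam relation forces $\epsilon_d=\epsilon_1$, so Lemma~\ref{Non-brick} genuinely applies to the shifted cyclic string), and the minimality/descent argument that disposes of zero-relation cyclic walks whose seam backtracks and which therefore fail to be bands themselves. You also conclude the zero-relation case by noting that band modules of Jordan rank at least two are not bricks, whereas the paper instead cites Bongartz to say brick algebras are representation-finite and hence band-free; both routes are valid, yours being slightly more self-contained while the paper's avoids any discussion of band-module endomorphism rings.
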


\begin{proof}
If there exists a cyclic walk $w$ in the bound quiver $(Q,I)$ that contains no relations, then there exists a band in $A$. This contradicts that $A$ is representation finite. If there exists a cyclic walk $w$ in the bound quiver $(Q,I)$ that contains a single relation, then by Lemma~\ref{Non-brick} this contradicts that $\ind(A) \subseteq \Brick(A)$. We obtain that $(2)$ is a consequence of $(1)$.


Conversely, $(2)$ implies that each string $w \in \Str(A)$ never revisits a vertex. Thus $\End(M(w)) \simeq k$.
\end{proof}


\section{Biclosed sets and biclosed subcategories}\label{sec:biclosed}

In this section, we recall the definition of the lattices of biclosed sets and biclosed subcategories, we construct a CU-labeling for these lattices, and we classify the join-irreducible biclosed sets.

A subcategory $\mathcal{C}$ of $\modu(\Lambda)$ is called \emph{weakly extension-closed} provided that for every triple of indecomposables $X,Y$ and $Z$ in $\modu(\Lambda)$ in a short exact sequence $0 \rightarrow X \rightarrow Y \rightarrow Z \rightarrow 0$, if $X$ and $Z$ in $\mathcal{C}$, then $Y \in \mathcal{C}$. Moreover, $\mathcal{C}$ is \emph{biclosed} if both $\mathcal{C}$ and $\mathcal{C}^c$ are weakly extension-closed, where $\mathcal{C}^c:= \{X\in \modu(A) \,| \, \add(X) \cap \mathcal{C}=0\}$.

%
%
%
%

In \cite{GM}, the first and second authors studied the poset of biclosed sets of strings which is the combinatorial incarnation of biclosed subcategories. Before defining this poset, we define a \emph{concatenation} of two strings $u, v \in \Str(\Lambda)$ to be a string in $\Str(\Lambda)$ of the form $v\gamma u$ or $v\gamma^{-1} u$, provided there exists such an arrow $\gamma \in Q_1$. At times, we will denote a concatenation of two strings $u$ and $v$ by $v\gamma^{\pm 1} u$ when we do not wish to specify whether we are considering $v\gamma u$ or $v\gamma^{-1} u$.

Now, a subset $B$ of $\Str(A)$ is called \emph{closed} if $u, v \in B$ implies that $v\gamma^{\pm 1} u$ is also in $B$, provided that $v\gamma^{\pm 1} u \in \Str(A)$ for some $\gamma \in Q_1$. Moreover, $B$ is called \emph{biclosed} if $B$ and $B^c:=\Str(A) \backslash B$ are closed. 
In order to distinguish the combinatorially defined biclosed sets from the homologically defined biclosed subcategories, we respectively denote these by $\Bic(A)$ and $\bic(A)$. Subsequently, $B$ and $\mathcal{B}$, respectively, will denote a biclosed set and a biclosed subcategory.

Both sets $\Bic(A)$ and $\bic(A)$ are partially ordered by inclusion. We leave it to the reader to verify that the map
$$B \mapsto\add \bigg( \bigoplus M (w) |\, w \in B \bigg)$$ 
defines a poset isomorphism between $\Bic(A)$ and $\bic(A)$.

\begin{example}\label{Ex_2cyc_alg}
Consider the brick gentle algebra $$A = k(\xymatrix{1 \ar@<1ex>[r]^\alpha & 2 \ar@<1ex>[l]^\beta})/\langle \alpha\beta, \beta\alpha\rangle.$$ In Figure~\ref{fig_A2_preproj}, we show the poset $\Bic(A)$. 
\end{example}

\begin{figure}[!htbp]
$$\includegraphics[scale=1]{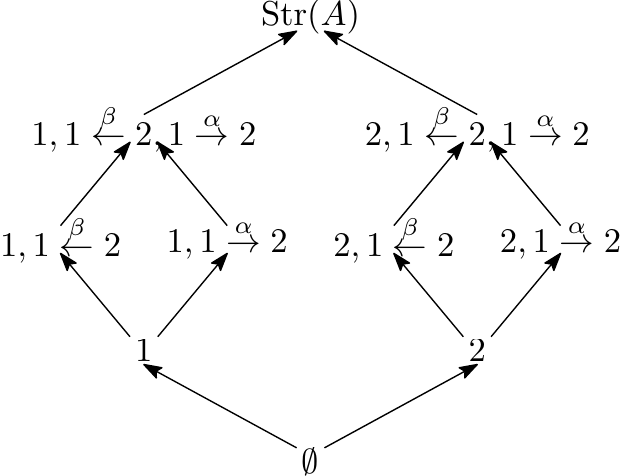}$$
\caption{A poset of biclosed sets of strings.}
\label{fig_A2_preproj}
\end{figure}

The following lemma describes the lattice structure of $\Bic(A)$ are $\bic(A)$.

\begin{theorem}\cite[Theorem 3.26]{palu2017non}\label{thm_bic_is_cu}
If $A$ is a representation finite gentle algebra, the poset $\Bic(A)$ is a congruence-uniform lattice.
\end{theorem}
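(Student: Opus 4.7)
The plan is to produce an explicit CU-labeling of $\Bic(A)$, which by Definition~\ref{defn_cu} simultaneously witnesses that $\Bic(A)$ is a lattice (via the existence of joins and meets deducible from the labeling structure) and that it is congruence-uniform. I would proceed in three stages.

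First, I would establish that $\Bic(A)$ is a lattice. Since $A$ is representation finite, by Proposition~\ref{Ind-Brick Finite} every cyclic walk contains at least two relations (after passing through the brick quotient); more importantly, $\Str(A)$ is finite. One checks that the closure operator ``smallest closed set containing $S$'' on $\Str(A)$ has a biclosed counterpart: given any two biclosed sets $B_1, B_2$, there is a smallest biclosed set containing $B_1\cup B_2$, obtained by alternately closing up the union and trimming the complement. The non-trivial point is that this alternating process terminates and yields a set whose complement is still closed; finiteness of $\Str(A)$ guarantees termination, and a direct combinatorial argument (using that any newly forced string is a concatenation of strings already present) ensures the complement remains closed. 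This produces well-defined joins, and meets are dual via the involution $B\mapsto B^c$.

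Second, I would analyze covering relations and define the labeling. The key combinatorial claim is that each cover $B\lessdot B'$ in $\Bic(A)$ corresponds to a single added string, i.e., $B'\setminus B = \{w\}$ for some $w\in \Str(A)$. Once this is established, define $\lambda:\Cov(\Bic(A)) \to \Str(A)$ by $\lambda(B,B') = w$ where $\{w\} = B'\setminus B$, with the codomain ordered (say) by $\Str(A)$ as an antichain (only the injectivity-type CU axioms use a relation, so an antichain suffices for CN2 once we verify that no new labels appear in the middle of a polygon interval).

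Third, I would verify the CN and CU conditions in an interval $[z, x\vee y]$ where $z\lessdot x$ and $z\lessdot y$ are two distinct covers with labels $w_x, w_y$. The biclosed-set structure forces this interval to be a ``polygon'': every maximal chain has the same length, and the covers encountered along any maximal chain correspond to adding precisely the closure of $\{w_x, w_y\}$ modulo $z$. To check CN1, I would show that on any maximal chain $C_i$, the final cover into $x\vee y$ must add back the one of $w_x, w_y$ not already added along $C_i$; this follows because the closure of $B\cup\{w_x,w_y\}$ in $\Bic(A)$ contains both $w_x$ and $w_y$ and, since complements are closed, nothing else that can be subtracted without destroying closure. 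CN2 holds vacuously or trivially under the antichain order, and CN3 is immediate from the single-string description of covers together with the fact that distinct covers in a chain add distinct strings. Finally, CU1 and CU2 follow from Lemma~\ref{cu-label_ji_mi}: the classification of join-irreducibles (to be developed later in the paper, and whose proof is independent) ensures that for each string $w$ there is exactly one join-irreducible biclosed set $j_w$ with $\lambda((j_w)_*, j_w) = w$, and dually for meet-irreducibles.

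The principal obstacle is the single-string cover claim together with the polygon structure of $[z, x\vee y]$. Both rely on a careful combinatorial analysis of how concatenations propagate through biclosed sets: one must show that adding a single string $w$ to a biclosed set and then forming the biclosure yields a set that differs from $B$ only by $w$ itself (at least at the cover level), and dually for the complement. This is where the precise axioms of gentle algebras (the restricted concatenation rules of Section~\ref{String Algebra} and Proposition~\ref{Ind-Brick Finite}) are essential, since in more general settings the biclosure of $B\cup\{w\}$ could force many new strings and destroy the single-step character of covers.
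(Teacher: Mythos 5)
The statement in question is cited by the paper from \cite[Theorem~3.26]{palu2017non} rather than proved; however, the paper does construct an explicit CU-labeling for $\Bic(A)$ in Proposition~\ref{Prop_CU_labeling_bic}, so your proposal is directly comparable to that construction. There is a genuine gap: your labeling codomain is too coarse.

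You propose $\lambda:\Cov(\Bic(A))\to \Str(A)$ with $\Str(A)$ regarded as an antichain, and you assert that (CN2) then holds vacuously and that (CU1)/(CU2) follow because ``for each string $w$ there is exactly one join-irreducible $j_w$ with bottom label $w$.'' Both claims fail. For the two-cycle algebra of Example~\ref{Ex_2cyc_alg}, the sets $\{e_1,\alpha\}$ and $\{e_2,\alpha\}$ are \emph{distinct} join-irreducibles whose unique lower cover is obtained by deleting $\alpha$, so two join-irreducibles share the bottom label $\alpha$ and (CU1) is violated. Likewise, intervals $[B, B_1\vee B_2]$ are in general pentagons or hexagons (Figure~\ref{polygons_fig}), not squares; in a hexagon there \emph{are} intermediate covers $(u,v)$ with $z<u\lessdot v<x\vee y$, so (CN2) is not vacuous, and with an antichain codomain the required strict inequalities $\lambda(z,x)<_Q\lambda(u,v)$ can never hold. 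The fix, which the paper adopts, is to label a cover $(B,B\sqcup\{w\})$ not merely by $w$ but by the pair $w_{\mathcal{D}}$ recording the set $\mathcal{D}$ of splits of $w$ contained in $B$, and to order these labels by the proper-substring relation (the poset $\mathcal{S}$). The split-data $\mathcal{D}$ separates the two join-irreducibles above (labels $\alpha_{\{e_1\}}$ and $\alpha_{\{e_2\}}$), and the substring order makes the intermediate labels in a hexagon strictly larger than the labels on the two bottom edges, which is exactly what (CN2) requires. Your general strategy (single-string covers, polygon intervals, join-irreducible classification) is otherwise sound, but without the refined label poset the verification of (CN2), (CU1), and (CU2) does not go through.

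A secondary point: you describe the join as an ``alternating closing and trimming'' process. The actual fact used in the paper, imported from \cite[Theorem~3.20(ii)]{palu2017non}, is the cleaner statement that $B_1\vee B_2=\overline{B_1\cup B_2}$, i.e.\ the plain closure of the union already has closed complement. That the complement stays closed after a single closure step is the real content and it needs its own combinatorial proof; an alternating process neither obviously terminates at the same set nor obviously preserves biclosedness along the way, so this part of your plan would also need to be substantially reworked or replaced by the direct argument.
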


For the remainder of the section, we assume that $A$ is a brick gentle algebra. It follows from \cite[Theorem 3.20 (ii)]{palu2017non} that for any biclosed sets $B_1, B_2 \in \Bic(A)$, one has that $B_1\vee B_2 = \overline{B_1\cup B_2}$ where for any $X \subseteq \Str(A)$ the set $\overline{X}$ denotes the smallest closed subset of $\Str(A)$ that contains $X$. The proof of \cite[Theorem 3.26]{palu2017non} shows that if $B\sqcup \{v\}, B \sqcup \{w\} \in \Bic(A)$, then $$(B\sqcup \{v\})\vee (B \sqcup \{w\}) = B\sqcup \overline{\{v,w\}}.$$

We now construct a CU-labeling for the lattice $\Bic(A)$. Let $w = \gamma_d^{\epsilon_d}\cdots \gamma_1^{\epsilon_1} \in \Str(A)$ and let $j \in \{1, \ldots, d\}$. We say that a pair $\{w_1, w_2\}$ is a \textit{break} of $w$ if $w = w_1\gamma_j^{\pm 1}w_2$ for some $j \in \{1, \ldots, d\}$. We refer to the strings $w_1$ and $w_2$ in a break of $w$ as \textit{splits} of $w$.

Define a poset $\mathcal{S}$ whose elements are of the form $(w, \{w^1,\ldots, w^{d}\}) \in \text{Str}(A)\times 2^{\text{Str}(A)}$ where 
\begin{itemize}
\item each $w^i$ is a split of $w$, and
\item two distinct splits $w^i$ and $w^j$ do not appear in the same break of $w$
\end{itemize}
up to the equivalence relation where we say that $(w,\{w^1,\ldots, w^d\})$ is equivalent to $(w^{-1}, \{(w^1)^{-1}, \ldots, (w^d)^{-1}\})$. We refer to elements of $\mathcal{S}$ as \textit{labels}, and, for brevity, we denote $(w, \{w^1,\ldots, w^{d}\}) \in \mathcal{S}$ by $w_\mathcal{D}$ with $\mathcal{D} = \{w^1,\ldots, w^{d}\}$. 

We now define the partial order on elements of $\mathcal{S}$. If $u, w \in \Str(A)$, we say that $u$ is a \textit{proper substring} of $w$ if there exist $u^1, u^2 \in \Str(A)$ at most one of which is the empty string such that $w = u^1\gamma_1^{\pm 1}u \gamma_2^{\pm 1}u^2$ for some arrows $\gamma_1, \gamma_2 \in Q_1$. If exactly one of $u^1$ and $u^2$ exists, then only one of the arrows $\gamma_1$ and $\gamma_2$ necessarily exists. The partial order is as follows: given $w_{\{w^1,\ldots, w^{d}\}}, u_{\{u^1,\ldots, u^{e}\}} \in \mathcal{S}$, we say $u_{\{u^1,\ldots, u^{e}\}} \le_\mathcal{S} w_{\{w^1,\ldots, w^{d}\}}$ if $u$ is a proper substring of $w$ or $u_{\{u^1,\ldots, u^{e}\}}$ is equivalent to $w_{\{w^1,\ldots, w^{d}\}}$.

\begin{remark}
A version of this poset of labels $\mathcal{S}$ has already been introduced in \cite{cliftondillerygarver}. There the notion of segments plays the role of strings. Many of the proofs \cite{cliftondillerygarver} are applicable to the current work, and so we will frequently cite \cite{cliftondillerygarver} in the sequel. We leave it to the reader to translate the relevant statements in terms of segments from \cite{cliftondillerygarver} into statements in terms of strings in the current work.
\end{remark}

In the proof of Theorem~\ref{thm_bic_is_cu}, it is shown that any covering relation in the lattice of biclosed sets is of the form $(B, B\sqcup \overline{\{w\}}) \in \text{Cov}(\Bic(A))$ where $w \not \in B$ is a string such that $B$ contains exactly one split from each break of $w$. The following lemma shows that any cover of a biclosed set $B$ is obtained by adding a single string to $B$. 

\begin{lemma}\label{Lemma_covers_in_bic}
For any string $w \in \Str(A)$, we have that $\overline{\{w\}} = \{w\}$. Thus, any covering relation in $\Bic(A)$ is of the form $(B, B\sqcup \{w\})$ where $w \not \in B$ is a string such that $B$ contains exactly one split from each break of $w$.
\end{lemma}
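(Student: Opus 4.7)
The plan is to handle the two claims in sequence. The second is essentially immediate from the first together with the characterization of covers already recalled in the paragraph above the lemma, so essentially all the work is in verifying $\overline{\{w\}}=\{w\}$.

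To establish this, I would argue that the closure operator applied to the singleton $\{w\}$ produces no new strings at the very first step, and hence stabilizes at $\{w\}$. By the definition of a closed set, the only candidates for new strings are concatenations of the form $w\gamma^{\pm 1}w$ for $\gamma\in Q_1$. I would then use the brick-gentle hypothesis to rule these out in two cases. If $w$ has positive length, then as a walk the word $w\gamma^{\pm 1}w$ traverses every vertex appearing in $w$ at least twice. By Proposition~\ref{Ind-Brick Finite}, every element of $\Str(A)$ is self-avoiding when $A$ is a brick gentle algebra; hence $w\gamma^{\pm 1}w$ cannot belong to $\Str(A)$, and no new element is produced. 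If $w=e_i$ is a lazy string, then $e_i\gamma^{\pm 1}e_i=\gamma^{\pm 1}$, which only makes sense as a string when $\gamma$ is a loop at $i$; but the same self-avoidance principle, applied to $\gamma$ itself viewed as a length-one string, shows that a brick gentle quiver contains no loops. In either case no concatenation is admissible, and $\overline{\{w\}}=\{w\}$.

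For the second assertion, the paragraph immediately preceding the lemma recalls from the proof of Theorem~\ref{thm_bic_is_cu} that every covering relation of $\Bic(A)$ has the form $(B,\,B\sqcup\overline{\{w\}})$ for some $w\notin B$ such that $B$ contains exactly one split from every break of $w$. Substituting $\overline{\{w\}}=\{w\}$ yields exactly the description in the statement.

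The main technical point is the correct invocation of the self-avoidance conclusion of Proposition~\ref{Ind-Brick Finite} and, in particular, its corollary that brick gentle quivers have no loops; beyond that the argument is a one-line consequence of the already-known form of the covers.
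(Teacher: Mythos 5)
Your proof is correct, but it takes a genuinely different route from the paper's. The paper deduces $\overline{\{w\}}=\{w\}$ from homological considerations: by \cite[Theorem 1.11]{DIJ} every brick is an indecomposable $\tau$-rigid module, so in a brick gentle algebra every indecomposable is rigid, and a string $w\alpha^{\pm 1}w$ would manufacture a nontrivial self-extension of $M(w)$, contradicting $\Ext^1_A(M(w),M(w))=0$. You instead invoke the purely combinatorial self-avoidance statement already isolated in Proposition~\ref{Ind-Brick Finite}: since every string in $\Str(A)$ visits each vertex at most once, the word $w\gamma^{\pm1}w$ (which repeats every vertex of $w$) cannot be a string, and your handling of the lazy-string case, via the observation that a loop is itself a non-self-avoiding length-one string, likewise excludes the only remaining concatenation. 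Both arguments are sound. Your route is more elementary and self-contained, relying only on a result already proved within the paper rather than on the external result of \cite{DIJ} and Auslander--Reiten duality; the paper's route, by contrast, ties the lemma to the $\tau$-tilting perspective that motivates the torsion-theoretic framework of the rest of the article. One very small point worth noting is that your treatment of the $w=e_i$ case is not strictly necessary as a separate subcase: a loop $\gamma$ has positive length and already revisits its source vertex, so the same self-avoidance argument covers it directly without the intermediate ``no loops'' observation; but your version is equally valid.
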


\begin{proof}
Recall that there is a bijection between bricks of $A$ and indecomposable $\tau$-rigid modules of $A$ by \cite[Theorem 1.11]{DIJ}. Using this and that $A$ is a brick gentle algebra, we obtain that every indecomposable $A$-module is $\tau$-rigid. In particular, every indecomposable $A$-module is rigid. This implies that the expression $w\alpha^{\pm 1}w$ is not a string and no string in $A$ may contain this expression. Consequently, $\overline{\{w\}} = \{w\}$.

The second assertion follows from the first.
\end{proof}

\begin{definition}
Define a map ${\lambda}:\text{Cov}(\text{Bic}(A))\rightarrow \mathcal{S}$ by
 ${\lambda}(B,B\sqcup{\{{w}\}})=w_{\{w^1,\ldots,w^{d}\}}$ where $w^1, \ldots, w^{d}$ are the splits of $w$ which are contained in $B$. It is clear that ${\lambda}$ is an edge-labeling of $\text{Bic}(A)$. 
\end{definition}

\begin{proposition}\label{Prop_CU_labeling_bic}
The edge-labeling ${\lambda}: \Cov(\Bic(A))\rightarrow \mathcal{S}$ is a CU-labeling.
\end{proposition}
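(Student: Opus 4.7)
The plan is to verify the axioms of Definition~\ref{defn_cu} using Lemma~\ref{Lemma_covers_in_bic} and the join formula $(B\sqcup\{v\})\vee(B\sqcup\{w\}) = B\sqcup\overline{\{v,w\}}$. A foundational observation I would establish first is that whenever $z\lessdot z\sqcup\{v\}$ and $z\lessdot z\sqcup\{w\}$ are distinct covers of $z$ in $\Bic(A)$, neither $v$ is a split of $w$ nor $w$ is a split of $v$. Arguing by contradiction, if $v$ appeared in a break $\{v, v'\}$ of $w$, then Lemma~\ref{Lemma_covers_in_bic} applied to $z\lessdot z\sqcup\{w\}$ forces $v'\in z$. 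But then the concatenation $v'\gamma^{\pm 1}v = w$ would lie in the closure of $z\cup\{v\}$, contradicting that $z\sqcup\{v\}$ is closed and avoids $w$.

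To establish (CN1), set $x = z\sqcup\{v\}$ and $y = z\sqcup\{w\}$, and observe that every string in $\overline{\{v,w\}}\setminus\{v,w\}$ arises via iterated concatenation involving both $v$ and $w$, hence properly contains both as substrings. From this I would argue by induction on string length that such a longer string $s\in\overline{\{v,w\}}$ can only be added to a biclosed set already containing both $v$ and $w$. Consequently, the penultimate cover of any maximal chain $C_1$ starting $z\lessdot x$ must itself add $w$, and the preceding intermediate covers all add strings properly containing $v$ and $w$. Combined with the foundational observation, the splits of $w$ in $x'$ coincide with the splits of $w$ in $z$, giving $\lambda(x', x\vee y) = w_{\mathcal{D}_w} = \lambda(z, y)$; a symmetric argument handles the cover $y' \lessdot x\vee y$.

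For (CN3), successive covers along a maximal chain add distinct strings, so their labels have distinct underlying strings. For (CN2), each intermediate cover in $C_1$ adds a string properly containing both $v$ and $w$, so its label strictly dominates both $\lambda(z,x) = v_{\mathcal{D}_v}$ and $\lambda(z,y) = w_{\mathcal{D}_w}$ in $\le_{\mathcal{S}}$. For (CU1), if $\lambda(j_*, j) = w_{\mathcal{D}}$ then $w$ is uniquely recovered as the single string in $j\setminus j_*$ and $\mathcal{D}$ is the set of splits of $w$ lying in $j_*$, so different join-irreducibles carry different labels. Dually, (CU2) follows via the anti-isomorphism $B\mapsto B^c$ on $\Bic(A)$ and the identification of meet-irreducibles with join-irreducibles of the opposite lattice.

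The main obstacle I anticipate is the detailed structural analysis of non-diamond intervals $[z, x\vee y]$, particularly the claim that the penultimate cover of $C_1$ must add precisely $w$ rather than some longer concatenation. Verifying this requires careful bookkeeping of the breaks of new strings in $\overline{\{v,w\}}$ and the biclosedness constraints at each step. Much of this analysis parallels the proofs in \cite{cliftondillerygarver}, which treat an analogous CU-labeling in the segment setting; I would adapt those arguments via the dictionary between segments and strings indicated in the remark preceding the current proposition.
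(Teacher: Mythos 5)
Your foundational observation (that $v$ and $w$ cannot be splits of one another when both $z\sqcup\{v\}$ and $z\sqcup\{w\}$ cover $z$) is correct and is essentially the paper's Lemma~\ref{nosplit}, though the paper only records it later. The issue lies in the verification of (CN1), which is the heart of the argument. You assert by induction that ``a longer string $s\in\overline{\{v,w\}}$ can only be added to a biclosed set already containing both $v$ and $w$,'' but this is false, and in fact your very next sentence contradicts it: you conclude that along $C_1$ the intermediate covers add the concatenations $v\alpha^{\pm 1}w$ (etc.)\ \emph{before} $w$ is added at the penultimate step. The resolution is that, for a cover $z'\lessdot z'\sqcup\{v\alpha^{\pm 1}w\}$, Lemma~\ref{Lemma_covers_in_bic} requires exactly one split from each break to lie in $z'$; for the break $\{v,w\}$, having $v\in z'$ and $w\notin z'$ is allowed, so $v\alpha^{\pm 1}w$ can indeed be added to $z\sqcup\{v\}$ without $w$. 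The conclusion you want is true, but it does not follow from the claim you made.

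What the paper does instead is exploit brick-gentleness to bound the interval: Proposition~\ref{Ind-Brick Finite} (self-avoiding strings) forces $\overline{\{u,w\}}\subset\{u,\,w,\,u\alpha^{\pm1}w,\,w\beta^{\pm1}u\}$, so $[B,B_1\vee B_2]$ is one of three small polygons (a square, a hexagon, or an octagon depending on which of $u\alpha^{\pm1}w$, $w\beta^{\pm1}u$ are strings). Axioms (CN1)--(CN3) are then read off case-by-case from these finite diagrams (Figure~\ref{polygons_fig}); for instance, in the hexagon the left chain is $B\lessdot B\sqcup\{u\}\lessdot B\sqcup\{u,\,u\alpha^{\pm1}w\}\lessdot B_1\vee B_2$, whose last cover adds $w$ with the correct label precisely because neither $u$ nor $u\alpha^{\pm1}w$ is a split of $w$. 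Your argument for (CN2) and (CN3) given the structure of the chain is fine, but you need this case analysis (or an equivalent argument) to establish the shape of the chain in the first place.

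Your treatment of (CU1) is also circular: knowing $\lambda(j_*,j)=w_{\mathcal{D}}$ tells you the string added and which of its splits were present below, but many distinct biclosed sets $B$ satisfy $B\lessdot B\sqcup\{w\}$ with $\mathcal{D}$ the set of splits of $w$ in $B$; that only one such $B\sqcup\{w\}$ is join-irreducible is precisely the content of Proposition~\ref{Prop_join_irr_des}, which is proved \emph{after} Proposition~\ref{Prop_CU_labeling_bic} and uses Lemma~\ref{cu-label_ji_mi} (itself contingent on the CU-labeling). The paper avoids this circularity by instead proving (CU2) directly with a concatenation argument: if two meet-irreducibles $M_1,M_2$ shared the label $w_{\mathcal{D}}$, one writes $w$ as a concatenation of elements alternating between $M_1$ and $M_2$, and since $M_1$ and $M_2$ agree on splits of $w$, one shows by stepping through the concatenation that $w\in M_1$, a contradiction; (CU1) then follows by duality. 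Your plan to get (CU2) from (CU1) via $B\mapsto B^{c}$ is reasonable in principle, but you still need one non-circular direct argument for either (CU1) or (CU2), which is missing.
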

\begin{proof}Let $B_1 = B\sqcup \{u\}, B_2 = B\sqcup \{w\} \in \text{Bic}(A)$ and consider the interval $[B, B_1\vee B_2]$. Recall that $B_1 \vee B_2 = B \sqcup \overline{\{u,w\}}.$ As $A$ is a brick gentle algebra, Proposition~\ref{Ind-Brick Finite} implies that $\overline{\{u,w\}} \subset \{u, w, u\alpha^{\pm 1} w, w\beta^{\pm 1} u\}$ for some $\alpha, \beta \in Q_1$ assuming both $u\alpha^{\pm 1} w$ and $w\beta^{\pm 1} u$ are strings of $A$.

If neither $u\alpha^{\pm 1} w$ nor $w\beta^{\pm 1} u$ is a string, then $[B,B_1\vee B_2]$ is the interval shown on the left in Figure~\ref{polygons_fig}. Now suppose only one of $u\alpha^{\pm 1} w$ and $w\beta^{\pm 1} u$ is a string. Without loss of generality, assume that $u\alpha^{\pm 1} w$ is a string. Then $[B,B_1\vee B_2]$ is shown on the right in Figure~\ref{polygons_fig}. Lastly, suppose that both $u\alpha^{\pm 1} w$ and $w\beta^{\pm 1} u$ are strings. Then $[B,B_1\vee B_2]$ is shown on the bottom in Figure~\ref{polygons_fig}. Using these figures, one deduces axioms (CN1), (CN2), and (CN3).

We now verify axiom (CU2), and axiom (CU1) is an immediate consequence of (CU2).
	
(CU2): Consider two meet-irreducibles $M_1, M_2 \in \text{MI}(\text{Bic}(A))$ which are covered by $M_1^*$ and $M_2^*$, respectively. Assume for the sake of contradiction that ${\lambda}(M_1, M_1^*)={\lambda}(M_2, M_2^*),$ and denote this label by $w_\mathcal{D}$. Thus $M_1^*=M_1\sqcup\{{w}\}$ and $M_2^*=M_2\sqcup\{{w}\}$. Note that $w \in M_1\vee M_2$ so there exists $u^1, \ldots, u^\ell \in M_1 \cup M_2$ and $\alpha_1, \ldots, \alpha_{\ell - 1} \in Q_1$ such that $w = u^1\alpha_1^{\pm 1}u^2 \cdots u^{\ell-1}\alpha_{\ell-1}^{\pm 1}u^\ell$. 

If there exists $i \in \{1,\ldots, \ell-1\}$ such that $u^i, u^{i+1} \in M_1$ (resp., $u^i,u^{i+1} \in M_2$), then $u^i\alpha_i^{\pm 1}u^{i+1} \in M_1$ (resp., $u^i\alpha_i^{\pm 1}u^{i+1} \in M_2$). Therefore, we can assume that the expression $u^1\alpha_1^{\pm 1}u^2 \cdots u^{\ell-1}\alpha_{\ell-1}^{\pm 1}u^\ell$ has the property that for any $i \in \{1, \ldots, \ell-1\}$ if $u^i \in M_1$ (resp., $u^i \in M_2$), then $u^{i+1} \in M_2$ (resp., $u^{i+1}\in M_1$). We can further assume, without loss of generality, that $u^1 \in M_1$.

Next, since ${\lambda}(M_1, M_1^*)={\lambda}(M_2, M_2^*)$, sets $M_1$ and $M_2$ both contain the same split of $w$ from a given break. We know that $u^1$ is a split of $w$ so $u^1 \in M_1 \cap M_2$. Since $u^2 \in M_2$, we know $u^1 \alpha_1^{\pm 1} u^2 \in M_2$. Now $u^1\alpha_1^{\pm 1}u^2$ is a split of $w$ so it follows that $u^1\alpha_1^{\pm 1} u^2 \in M_1\cap M_2$. By continuing this argument, we obtain that $w \in M_1$, a contradiction.
\end{proof}

\begin{figure}[!htbp]
$$\vcenteredhbox{\includegraphics[scale=1.2]{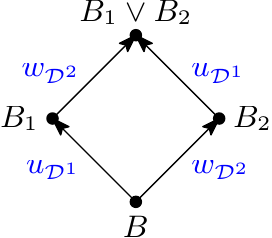}} \ \ \ \ \ \ \vcenteredhbox{\includegraphics[scale=1.2]{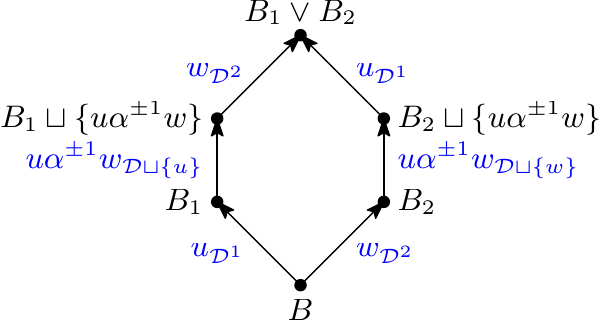}}$$
$$\vcenteredhbox{\includegraphics[scale=1.2]{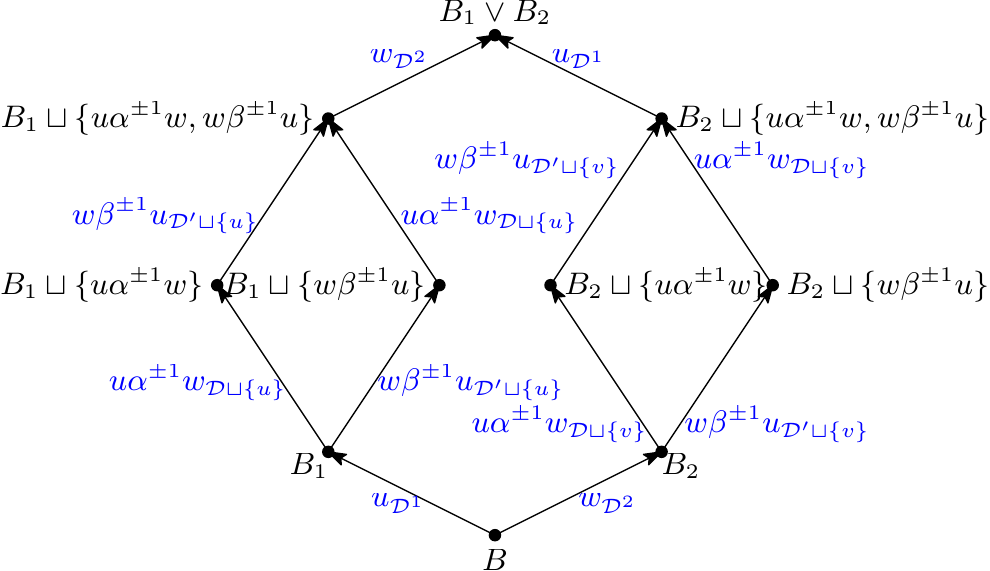}}$$
\caption{{The three forms of the interval $[B,B_1\vee B_2]$ of $\text{Bic}(A)$. The labels on the covering relations as defined by the labeling ${\lambda}: \text{Cov}(\text{Bic}(A)) \to \mathcal{S}$ are in blue. The set $\mathcal{D}^1$ (resp., $\mathcal{D}^2$) consists of all splits of $u$ (resp., $w$) belonging to $B$. Similarly, the set $\mathcal{D}$ (resp., $\mathcal{D}^\prime$) consists of all splits of $u\alpha^{\pm 1}w$ (resp., $w\beta^{\pm 1}u$) that belong to $B$.}}
\label{polygons_fig}
\end{figure}

As an application of the proof of Proposition~\ref{Prop_CU_labeling_bic}, we can say exactly which lattices of biclosed sets of strings are polygonal. A finite lattice $L$ is a \textit{polygon} if it consists of exactly two maximal chains and those chains agree only at the top and bottom elements. By definition, a finite lattice $L$ is \textit{polygonal} if for all $x \in L$ the following properties hold:
\begin{itemize}
\item if $y, z \in L$ are distinct elements covering $x$, then $[x,y\vee z]$ is a polygon, and
\item if $y, z \in L$ are distinct elements covered by $x$, then $[y\wedge z, x]$ is a polygon.
\end{itemize}

\begin{corollary}
Let $A = kQ/I$ be a brick gentle algebra. The lattice $\Bic(A)$ is polygonal if and only if there are no oriented 2-cycles in $Q$.
\end{corollary}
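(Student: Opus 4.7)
The plan is to reduce the statement to a local analysis using the three-case classification of covering intervals extracted from the proof of Proposition~\ref{Prop_CU_labeling_bic}: for any covers $B_1=B\sqcup\{u\}$ and $B_2=B\sqcup\{w\}$ of $B$ in $\Bic(A)$, the interval $[B,B_1\vee B_2]$ is either a square (Case~1: neither $u\alpha^{\pm 1}w$ nor $w\beta^{\pm 1}u$ is a string), a pentagon (Case~2: exactly one of them is a string), or the larger figure at the bottom of Figure~\ref{polygons_fig} (Case~3: both are strings). A direct count in Case~3 produces at least four maximal chains, so that interval is not a polygon, while Cases~1 and~2 are polygons by inspection. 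Because complementation $B\mapsto B^c$ is an order-reversing involution on $\Bic(A)$, the dual intervals $[B_1\wedge B_2,B]$ obey the same trichotomy, so the statement reduces to showing that Case~3 occurs precisely when $Q$ carries an oriented $2$-cycle.

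For the forward direction I would argue the contrapositive. Assume $Q$ contains an oriented $2$-cycle with arrows $\alpha\colon a\to b$ and $\beta\colon b\to a$. Proposition~\ref{Ind-Brick Finite} applied to the length-two cyclic walk forces $\alpha\beta,\beta\alpha\in I$, and the same proposition rules out loops at a single vertex, so every singleton $\{e_v\}$ is biclosed. In particular $\{e_a\}$ and $\{e_b\}$ are distinct covers of $\emptyset$, and $e_a\beta e_b=\beta$ together with $e_b\alpha e_a=\alpha$ are both strings, so $[\emptyset,\overline{\{e_a,e_b\}}]$ falls in Case~3. A short check that $\{e_a,\alpha\}$, $\{e_a,\beta\}$, $\{e_b,\alpha\}$, $\{e_b,\beta\}$ are biclosed then exhibits four distinct maximal chains in this interval, exactly as in Example~\ref{Ex_2cyc_alg}, so $\Bic(A)$ is not polygonal.

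The reverse direction is where the real work lies, and I expect the local analysis of junctions to be the main obstacle. Assuming no oriented $2$-cycle, suppose for contradiction that Case~3 holds for some $B,u,w$. Joining the walks underlying $u\alpha^{\pm 1}w$ and $w\beta^{\pm 1}u$ across their shared endpoints produces a closed walk $C$ in $Q$ which, by Proposition~\ref{Ind-Brick Finite}, must contain at least two relations. Every consecutive pair of arrows in $C$ either lies inside $u$ or inside $w$ (and so is not a relation, as $u$ and $w$ are themselves strings), or is one of the four junctions involving $\alpha$ or $\beta$; the latter four pairs occur as consecutive letters inside the strings $u\alpha^{\pm 1}w$ or $w\beta^{\pm 1}u$, so they cannot be relations either. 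The only way a relation can appear in $C$ is for one of $u,w$ to be a zero-length string and for two of those junctions to fuse into a direct $\alpha$-$\beta$ adjacency. Using that brick gentle algebras admit no non-trivial cyclic strings (another consequence of Proposition~\ref{Ind-Brick Finite}), a short case check shows that producing two relations forces both $u$ and $w$ to be trivial and $\{\alpha,\beta\}$ to bound an oriented $2$-cycle in $Q$, contradicting the assumption on $Q$. Hence Case~3 never occurs, every covering interval is a square or pentagon, and $\Bic(A)$ is polygonal.
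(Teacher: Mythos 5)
Your proposal is correct and follows the same strategy as the paper: reduce via self-duality of $\Bic(A)$ to checking upward covering intervals, invoke the trichotomy of such intervals from the proof of Proposition~\ref{Prop_CU_labeling_bic}, and show the third (non-polygonal) case occurs precisely when $Q$ has an oriented $2$-cycle. The paper's proof simply asserts this last equivalence, so your cyclic-walk argument via Proposition~\ref{Ind-Brick Finite} supplies the details that the paper leaves implicit.
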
 

\begin{proof}
Since $\Bic(A)$ is self-dual, it is polygonal if and only if every interval $[B, B_1\vee B_2]$ is a polygon where $B_1$ and $B_2$ are two distinct biclosed sets covering a biclosed set $B$. In the proof of Proposition~\ref{Prop_CU_labeling_bic}, we classified all intervals $[B, B_1\vee B_2]$ where $B_1$ and $B_2$ are two distinct biclosed sets covering a biclosed set $B$. All such intervals are polygons if and only if there are no oriented 2-cycles in $Q$.
\end{proof}

We conclude this section by classifying the join-irreducbile biclosed sets. Given $w_\mathcal{D} \in \mathcal{S}$, define $$J(w_\mathcal{D}) := \overline{\{w\} \sqcup \mathcal{D} \sqcup \bigcup_{u \in \mathcal{D}} S(u)}$$ where $S(u) = S(u,\mathcal{D}) \subset \text{Str}(A)$ is defined to be the set of all splits $v$ of $u$ satisfying the following: \begin{itemize}
\item[i)] string $v$ is not a split of $w$, and
\item[ii)] string $v$ may not be concatenated with any string in $\mathcal{D}$.
\end{itemize} 
\noindent Observe that any element of $J(w_\mathcal{D})\backslash\left(\{w\} \sqcup \mathcal{D} \sqcup \bigcup_{u \in \mathcal{D}} S(u)\right)$ is not a substring of $w$.

\begin{example}\label{example_of_set_J}
Let $A = kQ/\langle \alpha\delta, \delta\gamma\rangle$ where $Q$ is the quiver shown in Figure~\ref{J_example}. Observe that $J(\alpha^{-1}\beta\gamma^{-1}_{\{e_1, e_4, \alpha^{-1}\}}) = \{e_1, e_4, \alpha^{-1}, \delta, \alpha^{-1}\beta\gamma^{-1}\}.$
\end{example}
\begin{figure}[!htbp]
$${\xymatrix{
   2  & 3 \ar_\beta[l] \ar^\gamma[d]\\
   1 \ar^{\alpha}[u]  & 4 \ar^\delta[l]}}$$
\caption{The quiver from Example~\ref{example_of_set_J}.}
\label{J_example}
\end{figure}
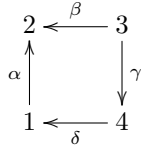

\begin{lemma}\label{lemma_J_biclosed}
The set $J(w_\mathcal{D})$ is biclosed and $\mathcal{D}$ is exactly the set of splits of $w$ contained in $J(w_\mathcal{D})$. Additionally, for any element of $\{w\} \sqcup \mathcal{D} \sqcup \bigcup_{u\in \mathcal{D}} S(u)$ exactly one element from each of its breaks belongs to $J(w_\mathcal{D})$. 
\end{lemma}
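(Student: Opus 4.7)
The plan is to proceed in four stages using the observation stated just before the lemma, together with the self-avoidance of strings in brick gentle algebras (Proposition~\ref{Ind-Brick Finite}). Throughout, set $E := \{w\} \sqcup \mathcal{D} \sqcup \bigcup_{u \in \mathcal{D}} S(u)$ so that $J(w_\mathcal{D}) = \overline{E}$ is already closed by definition. I will also use the (implicit) assumption that $\mathcal{D}$ contains exactly one split from each break of $w$, which is precisely the situation when $w_\mathcal{D}$ labels a covering relation of $\Bic(A)$ via $\lambda$.

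The first stage is to verify the observation that every element of $J(w_\mathcal{D}) \setminus E$ fails to be a substring of $w$. This proceeds by induction on the number of concatenations used to build such an element from $E$. In the inductive step, write $y = p\gamma^{\pm 1} q$ with $p, q \in J(w_\mathcal{D})$, and suppose $y$ is a substring of $w$; self-avoidance forces the arrow $\gamma$ to occur at a specific position in $w$, so both $p$ and $q$ are also substrings of $w$ and, by the inductive hypothesis, lie in $E$. A case analysis on which pieces of $E$ contain $p$ and $q$ then contradicts either the pairwise condition in the definition of $\mathcal{S}$ (two splits of $w$ from the same break cannot both lie in $\mathcal{D}$) or condition~(ii) defining $S(u)$. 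From this, claim~(2) is immediate: a split of $w$ in $J(w_\mathcal{D})$ is a substring of $w$, hence lies in $E$; since it is neither $w$ nor a member of any $S(u)$ (by condition~(i)), it must be in $\mathcal{D}$.

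Next I would verify claim~(3) by case analysis on $x \in E$. For $x = w$, the claim reduces to the assumption on $\mathcal{D}$ combined with claim~(2). For $x = u \in \mathcal{D}$, think of $u$ as a substring of $w$ reaching one end, so each break of $u$ decomposes into a \emph{boundary} split $u_1$ (which is still a split of $w$) and an \emph{interior} split $u_2$ (which is not). If $u_1 \in \mathcal{D}$, then $u_1 \in J(w_\mathcal{D})$, while $u_2$ concatenates with $u_1$ to produce $u \in \mathcal{D}$ and thus fails condition~(ii) of $S(u)$; ruling $u_2$ out of the other $S(u')$ follows from the fact that $u_2$ reaches an endpoint interior to any such $u'$. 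If instead the complementary split of $w$ is the one in $\mathcal{D}$, then $u_1$ is excluded from $J(w_\mathcal{D})$ by claim~(2), and one verifies directly that $u_2 \in S(u)$. The case $x \in S(u)$ is analogous but requires more careful tracking, via self-avoidance, of which concatenations with members of $\mathcal{D}$ are possible.

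Finally, biclosedness follows from claim~(3) together with an inductive splicing argument. Given $x \in J(w_\mathcal{D})$ and a break $\{x_1, x_2\}$ of $x$, choose any decomposition $x = p_1 \gamma^{(1)\pm 1} p_2 \cdots \gamma^{(n-1)\pm 1} p_n$ with $p_i \in E$ witnessing $x \in \overline{E}$. A break at a bridging arrow $\gamma^{(j)\pm 1}$ splits $x$ into two iterated concatenations of $E$-elements, both of which lie in $J(w_\mathcal{D})$; a break inside some $p_j$ yields two pieces of which at least one is in $J(w_\mathcal{D})$ by claim~(3), and splicing that piece into the outer concatenation shows the corresponding $x_i$ lies in $J(w_\mathcal{D})$. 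The principal obstacle throughout is the case analysis in the third stage, particularly verifying that interior splits of members of $\mathcal{D}$ and $S(u)$ land in some $S(\cdot)$ exactly in the complementary situation to boundary splits lying in $\mathcal{D}$; this bookkeeping leans heavily on self-avoidance to exclude unwanted concatenations.
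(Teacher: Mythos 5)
Your proposal is correct and takes essentially the same overall route as the paper: first establish that the ``core'' set $E := \{w\} \sqcup \mathcal{D} \sqcup \bigcup_{u \in \mathcal{D}} S(u)$ has the desired break property, then extend to the closure $J(w_\mathcal{D}) = \overline{E}$ by the same decompose-and-splice argument, and finally identify $\mathcal{D}$ with the splits of $w$ in $J(w_\mathcal{D})$. The notable differences are in how the local claims are established. First, you prove the observation preceding the lemma (that every element of $J(w_\mathcal{D}) \setminus E$ fails to be a substring of $w$) by induction on concatenation length, whereas the paper simply asserts it without proof; you then deduce claim (2) as a one-line corollary, whereas the paper gives a separate contradiction argument (assuming $w = w^1\alpha^{\pm 1}w^2$ with $w^2 \notin \mathcal{D}$, expanding $w^2$ as a concatenation, and contradicting condition (ii) of $S(u)$). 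Both derivations are valid. Second, and more substantively, your Stage~3 attempts to re-derive from scratch the coclosedness of $E$ and the ``exactly one'' sharpening, where the paper outsources the former to \cite[Lemma 3.6]{cliftondillerygarver} and declares the latter ``clear.'' Your Stage~3 case analysis is where the real work lives, and you correctly identify the bookkeeping around interior versus boundary splits of $u \in \mathcal{D}$ (and of $v \in S(u)$) as the main obstacle; to make this airtight one needs to track, using self-avoidance and the fact that each $u \in \mathcal{D}$ reaches an endpoint of $w$, exactly which concatenations with $\mathcal{D}$-elements are possible at the two endpoints of a given split. This is precisely the content of the cited lemma from \cite{cliftondillerygarver}, so your approach is sound but trades a citation for an unfinished case analysis. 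Your Stage~4 splicing argument matches the paper's extension of coclosedness from $E$ to $\overline{E}$.
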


\begin{proof}
By definition, the set is closed so we show that $J(w_\mathcal{D})$ is coclosed. The proof of \cite[Lemma 3.6]{cliftondillerygarver} implies that the set $\{w\} \sqcup \mathcal{D} \sqcup \bigcup_{u \in \mathcal{D}}S(u)$ is coclosed. Thus, to complete the proof, we show that for any $u \in J(w_\mathcal{D})\backslash \left(\{w\} \sqcup \mathcal{D} \sqcup \bigcup_{u \in \mathcal{D}}S(u)\right)$ at least one element of each break of $u$ belongs to $J(w_\mathcal{D})$. To do so, suppose $w^1\alpha_1^{\pm 1}w^2 \cdots w^{k-1}\alpha_{k-1}^{\pm 1}w^k \in J(w_\mathcal{D})$ where $w^1, \ldots, w^k \in \{w\}\sqcup \mathcal{D} \sqcup \bigcup_{u\in \mathcal{D}} S(u)$ and $\alpha_1,\ldots, \alpha_{k-1} \in Q_1$. Now assume $w^1\alpha_1^{\pm 1}w^{2}\cdots w^{k-1}\alpha_{k-1}^{\pm 1}w^k = u\alpha^{\pm 1}v$ for some strings $u, v \in \text{Str}(A)$ and some $\alpha \in Q_1$. Either $u = w^1\alpha_1^{\pm 1}w^2\cdots w^{i-1}\alpha_i^{\pm 1}w^i$ and $v= w^{i+1}\alpha_{i+1}^{\pm 1}w^{i+2} \cdots w^{k-1}\alpha_{k-1}^{\pm 1}w^k$ for some $i \in \{1,\ldots, k-1\}$ or $u = w^1\alpha_1^{\pm 1}w^2\cdots w^{i-1}\alpha_{i-1}^{\pm 1}u^i$ and $v = v^{i}\alpha_i^{\pm 1}w^{i+1}\cdots w^{k-1}\alpha_{k-1}^{\pm 1}w^k$ for some $i \in \{1, \ldots, k\}$ where $u^i$ and $v^i$ are nonempty strings satisfying $u^i\beta_i^{\pm 1}v^i = w^i$ for some $\beta_i \in Q_1$.

It is enough to assume we are in the latter case. Since $\{w\}\sqcup \mathcal{D} \sqcup \bigcup_{u\in \mathcal{D}} S(u)$ is coclosed, given $w^i$ one has that $u^i \in J(w_\mathcal{D})$ or $v^i \in J(w_\mathcal{D})$. Suppose without loss of generality that $u^i \in J(w_\mathcal{D})$. As $w^1\alpha_1^{\pm 1}w^2\cdots w^{i-2}\alpha_{i-2}^{\pm 1}w^{i-1} \in J(w_\mathcal{D})$, we know $u = w^1\alpha_1^{\pm 1}w^2\cdots w^{i-1}\alpha_{i-1}^{\pm 1}u^i \in J(w_\mathcal{D})$. We obtain that $J(w_\mathcal{D})$ is coclosed.

To prove that any split of $w$ belonging to $J(w_\mathcal{D})$ belongs to $\mathcal{D}$, suppose $w = w^1\alpha_1^{\pm 1}w^2$ where $w^1, w^2 \in J(w_\mathcal{D})$ and $\alpha_1 \in Q_1$. Without loss of generality, assume $w^2 \not \in \mathcal{D}$. This implies that $w^2 = u^1\beta_1^{\pm 1}u^2\cdots u^{k-1}\beta_{k-1}^{\pm 1}u^k$ with $k \ge 2$ for some strings $u^1, \ldots, u^{k} \in \{w\} \sqcup \mathcal{D}\sqcup \bigcup_{u \in \mathcal{D}}S(u)$ and some arrows $\beta_1,\ldots, \beta_{k-1}\in Q_1$. Moreover, $u^i \in \bigcup_{u \in \mathcal{D}}S(u)$ for each $i \in \{1, \ldots, k-1\}$ and $u^k \in \mathcal{D}$. However, this implies that $u^{k-1}$ and $u^k$ may be concatenated, which contradicts that $u^{k-1} \in S(u)$ for some $u \in \mathcal{D}$. 

The final assertion is clear.
\end{proof}

We use the sets $J(w_\mathcal{D})$ to classify the join-irreducible biclosed sets in the following proposition.

\begin{proposition}\label{Prop_join_irr_des}
The biclosed set $J(w_\mathcal{D})$ satisfies ${\lambda}_\downarrow(J(w_\mathcal{D})) = \{w_\mathcal{D}\}.$ Moreover, any biclosed set $B$ with $w_\mathcal{D} \in {\lambda}_\downarrow(B)$ satisfies $J(w_\mathcal{D}) \le B$, and the reverse containment holds if and only if ${\lambda}_{\downarrow}(B) = \{w_\mathcal{D}\}$. Consequently, the set map $J(-): \mathcal{S} \to \JI(\Bic(A))$ is a bijection.
\end{proposition}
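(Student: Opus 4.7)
The plan is to identify $J(w_\mathcal{D})$ as the unique join-irreducible of $\Bic(A)$ whose cover label is $w_\mathcal{D}$, by invoking Lemma~\ref{cu-label_ji_mi}, which characterizes such a join-irreducible as the minimum biclosed set containing $w_\mathcal{D}$ in its down-label set. I first verify that $w_\mathcal{D} \in \lambda_\downarrow(J(w_\mathcal{D}))$ by showing that $B' := J(w_\mathcal{D}) \setminus \{w\}$ is biclosed; then $(B', J(w_\mathcal{D}))$ is a covering relation whose label equals $w_\mathcal{D}$ by Lemma~\ref{lemma_J_biclosed} (which identifies $\mathcal{D}$ as the set of splits of $w$ in $J(w_\mathcal{D})$). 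Closedness of $B'$ is immediate: a concatenation $u\gamma^{\pm 1}v$ of elements of $B'$ lying in $J(w_\mathcal{D})$ that coincided with $w$ would exhibit two distinct splits of $w$ in the same break of $w$, both lying in $\mathcal{D}$, contradicting the defining axiom of $\mathcal{S}$. For coclosedness, the brick hypothesis plays an essential role: since every string in $A$ is self-avoiding by Proposition~\ref{Ind-Brick Finite} and every generator of $J(w_\mathcal{D})$ is either $w$ or a substring of $w$, one verifies by induction on the construction of the closure that every element of $J(w_\mathcal{D})$ has all its vertices contained in $V(w)$, because the connecting arrow in any concatenation necessarily joins two vertices of $V(w)$. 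Consequently, no proper extension $w\gamma^{\pm 1}v$ with $v$ a nonempty string can lie in $J(w_\mathcal{D})$---such an extension has a vertex outside $V(w)$---which is precisely what is needed for $J(w_\mathcal{D})^c \sqcup \{w\}$ to be closed.

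The main step is the minimality claim: if $B$ is biclosed with $w_\mathcal{D} \in \lambda_\downarrow(B)$, then $J(w_\mathcal{D}) \subseteq B$. From the definition of the cover label, $w \in B$ and the splits of $w$ lying in $B$ are exactly $\mathcal{D}$; since $B$ is closed, it remains to show that $t \in B$ for every $t \in S(v)$ with $v \in \mathcal{D}$. By self-avoidance, $t$ occupies a unique position in $w$, and the conditions that $t$ be a split of $v$ but not of $w$ force $v$ to be the minimal prefix or suffix of $w$ having $t$ at its endpoint. Writing the break of $v$ producing $t$ as $\{L', t\}$ with $v$ the concatenation of $L'$ and $t$ via some arrow $\gamma$, one observes that $L'$ is itself a split of $w$: for instance, in the prefix case $v = \gamma_1\cdots\gamma_b$ and $t = \gamma_a\cdots\gamma_b$, we have $L' = \gamma_1\cdots\gamma_{a-2}$, which is the prefix split of $w$ obtained by breaking at $\gamma_{a-1}$. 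The hypothesis $t \in S(v)$ forbids $t$ from being concatenated with any element of $\mathcal{D}$, yet the concatenation of $t$ and $L'$ via $\gamma$ yields the valid string $v$; hence $L' \notin \mathcal{D}$. As the splits of $w$ in $B$ are exactly $\mathcal{D}$, we deduce $L' \notin B$, and the biclosedness of $B$ applied to $v \in B$ forces $t \in B$. Thus $B$ contains all generators of $J(w_\mathcal{D})$, and $J(w_\mathcal{D}) \subseteq B$ follows by closedness.

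Combining these two steps with Lemma~\ref{cu-label_ji_mi} identifies $J(w_\mathcal{D})$ as the unique join-irreducible with cover label $w_\mathcal{D}$; in particular $\lambda_\downarrow(J(w_\mathcal{D})) = \{w_\mathcal{D}\}$. Assertion (iii) is immediate: if $B \le J(w_\mathcal{D})$ and $\lambda_\downarrow(B) = \{w_\mathcal{D}\}$, the minimality claim gives $J(w_\mathcal{D}) \le B$, forcing equality, while the converse is the first assertion. Finally, $J(-)$ is well-defined by the first assertion, injective because $\lambda_\downarrow$ separates the images, and surjective because every $j \in \JI(\Bic(A))$ has a unique cover label $w_\mathcal{D}$ and equals $J(w_\mathcal{D})$ by the minimality characterization. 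I expect the main obstacle to be the case analysis in the minimality step---specifically, the identification of the non-$t$ split $L'$ of the relevant break of $v$ as itself a split of $w$, so that the global constraint on splits of $w$ in $B$ can be brought to bear via the non-concatenability condition in the definition of $S(v)$.
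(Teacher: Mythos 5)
Your proof is correct, and its overall architecture matches the paper's: establish $(J(w_\mathcal{D})\setminus\{w\},J(w_\mathcal{D}))$ as a cover with label $w_\mathcal{D}$, prove $J(w_\mathcal{D})$ is the minimum biclosed set with $w_\mathcal{D}$ in its down-label set, and invoke Lemma~\ref{cu-label_ji_mi}. The one organizational difference is in how $\lambda_\downarrow(J(w_\mathcal{D}))=\{w_\mathcal{D}\}$ is obtained. The paper verifies this directly by a case analysis: for any $v_{\mathcal{D}'}\in\lambda_\downarrow(J(w_\mathcal{D}))$ it checks, according to whether $v$ lies in $\mathcal{D}$, in some $S(u)$, or in $J(w_\mathcal{D})$ but outside the generating set, that $J(w_\mathcal{D})\setminus\{v\}$ fails to be biclosed, forcing $v=w$. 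You instead observe that once membership and minimality are established, Lemma~\ref{cu-label_ji_mi} already identifies $J(w_\mathcal{D})$ with the unique join-irreducible carrying cover label $w_\mathcal{D}$, from which $\lambda_\downarrow(J(w_\mathcal{D}))=\{w_\mathcal{D}\}$ drops out for free; this genuinely bypasses the paper's case analysis. You also supply two justifications the paper leaves as implicit or ``clear'': the vertex-containment argument ($V(z)\subseteq V(w)$ for all $z\in J(w_\mathcal{D})$, using self-avoidance from Proposition~\ref{Ind-Brick Finite}) showing coclosedness of $J(w_\mathcal{D})\setminus\{w\}$, and the argument that each $t\in S(v)$ lies in $B$ by identifying the complementary split $L'$ of $v$ as a split of $w$ outside $\mathcal{D}$, hence outside $B$, and appealing to coclosedness of $B$. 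Both added details are correct and worth having.
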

\begin{proof}
Since $\mathcal{D}$ is exactly the set of splits of $w$ contained in $J(w_\mathcal{D})$, $w$ is not expressible as a concatenation of elements of $J(w_\mathcal{D})$. This implies that $J(w_\mathcal{D})\backslash\{w\}$ is biclosed. Moreover, $w_\mathcal{D} \in {\lambda}_\downarrow(J(w_\mathcal{D}))$. 

Now let $v_{\mathcal{D}^\prime} \in {\lambda}_{\downarrow}(J(w_\mathcal{D}))$. The string $v \not \in J(w_\mathcal{D})\backslash\left(\{w\} \sqcup \mathcal{D}\sqcup \bigcup_{u \in \mathcal{D}}S(u)\right)$, otherwise $J(w_\mathcal{D})\backslash\{v\}$ is not closed. If $v \in \mathcal{D}$, then by Lemma~\ref{lemma_J_biclosed} there is not a split of $w$ in $J(w_\mathcal{D})\backslash\{v\}$ from each break of $w$. Therefore $J(w_\mathcal{D})\backslash\{v\}$ is not closed. 

Next, suppose $v \in S(u)$ for some $u \in \mathcal{D}$. Since $v \in S(u)$, writing $u = v^\prime\alpha^{\pm 1} v$ for some $\alpha \in Q_1$ implies that $v, v^\prime \not \in J(w_\mathcal{D})\backslash\{v\}$. Thus $J(w_\mathcal{D})\backslash\{v\}$ is not coclosed. Consequently, $v = w$ and Lemma~\ref{lemma_J_biclosed} therefore implies that $\mathcal{D} = \mathcal{D}^\prime.$ 

Now assume that $w_\mathcal{D} \in {\lambda}_\downarrow(B)$ for some biclosed set $B \in {\Bic}(A)$. Since the set of splits of $w$ contained in $B$ is the set $\mathcal{D}$, it is clear that $\{w\} \sqcup \mathcal{D} \sqcup \bigcup_{u \in \mathcal{D}} S(u) \subset B$. The set $B$ is closed so we conclude that $J(w_\mathcal{D}) \le B$. 

We have shown that $J(w_\mathcal{D})$ is the minimal biclosed set satisfying ${\lambda}_\downarrow(J(w_\mathcal{D}))$ so by Lemma~\ref{cu-label_ji_mi} we obtain the remaining assertions.\end{proof}

\section{Torsion shadows}\label{torshad_section}

In this section, we show that the data of a biclosed subcategory of the module category of a brick gentle algebra $A$ is equivalent to a certain subcategory of the module category of an algebra analogous to a preprojective algebra. This algebra will be denoted by $\Pi(A)$, and we refer to the relevant subcategories of $\text{mod}(\Pi(A))$ as torsion shadows. 

Recall that in the gentle bound quiver of $A=kQ/I$, every generator of $I$ is given by a pair of arrows $\alpha$ and $\beta$ such that $\beta \alpha$ is a path of length two in $Q$.
Let $\overline{Q}$ be the \emph{doubled quiver} of $Q$ (i.e., $\overline{Q}_0:=Q_0$ and $\overline{Q}_1:=Q_1 \cup Q^{\ast}_1$) and $\overline{I} := \langle \beta\alpha, \alpha^*\beta^* | \beta\alpha \in I\rangle$ the two-sided ideal in $k\overline{Q}$ determined by the relations generating $I$ and their duals. Define $\Pi(A):=k\overline{Q}/ \overline{I}$.

We now give a general definition of torsion shadows, the main examples of which will be the above mentioned subcategories of $\modu(\Pi(A))$. We also present a general lemma about torsion shadows.

\begin{definition}
Let $\mathcal{M}$ be a subcategory of $\modu(\Lambda)$. For every $\mathcal{T} \in \tors(\Lambda)$, the $\mathcal{M}$-\emph{torsion shadow} (or simply \emph{torsion shadow}) of $\mathcal{T}$ is $\mathfrak{T}_{\mathcal{M}}:= \mathcal{T} \cap \mathcal{M}.$ We let $\torshad_\mathcal{M}(\Lambda)$ denote the poset of all $\mathcal{M}$-torsion shadows in $\modu(\Lambda)$ ordered by inclusion.
\end{definition}

Provided there is no confusion, we often suppress $\mathcal{M}$ and simply use $\mathfrak{T}$ for the $\mathcal{M}$-torsion shadow of $\mathcal{T}$. 


\begin{lemma}\label{torshad-epi}
If $\Lambda$ is an algebra and $\mathcal{M}$ a subcategory of $\modu(\Lambda)$, then $\torshad_{\mathcal{M}}(\Lambda)$ forms a complete lattice and the map  $(-)\cap \mathcal{M}: \tors(\Lambda) \twoheadrightarrow \torshad_{\mathcal{M}}(\Lambda)$ is a surjective lattice map. 

If $\phi:\Lambda \twoheadrightarrow \Lambda'$ is an algebra epimorphism and $\mathcal{M}$ contains $\modu(\Lambda')$, then the map $(-)\cap \modu(\Lambda'): \torshad_{\mathcal{M}}(\Lambda) \twoheadrightarrow \tors(\Lambda')$ is a surjective lattice map. Additionally, the surjective lattice map $(-)\cap \modu(\Lambda'): \tors(\Lambda) \twoheadrightarrow \tors(\Lambda')$ factors through $\torshad_\mathcal{M}(\Lambda)$.\end{lemma}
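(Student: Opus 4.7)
For the first part, my plan is to exhibit the complete lattice structure on $\torshad_{\mathcal{M}}(\Lambda)$ first and then verify that $\pi:=(-)\cap\mathcal{M}$ is a surjective lattice map. The key observation is that $\torshad_{\mathcal{M}}(\Lambda)$ is closed under arbitrary intersections: for any family $\mathfrak{T}_{i}=\mathcal{T}_{i}\cap\mathcal{M}$, the distributivity of intersection gives $\bigcap_{i}\mathfrak{T}_{i}=(\bigcap_{i}\mathcal{T}_{i})\cap\mathcal{M}$, which is a torsion shadow because $\tors(\Lambda)$ is itself closed under intersections. Together with the maximum element $\mathcal{M}=\modu(\Lambda)\cap\mathcal{M}$, this gives a complete lattice in which meet is intersection and join is the intersection of upper bounds. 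Surjectivity of $\pi$ is immediate from the definition, and the displayed identity already shows that $\pi$ preserves (arbitrary) meets; preservation of top and bottom is an easy direct check.

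The main obstacle is preservation of joins, i.e.\ proving $\pi(\mathcal{T}_{1}\vee\mathcal{T}_{2})=\pi(\mathcal{T}_{1})\vee_{\mathcal{M}}\pi(\mathcal{T}_{2})$. The $\supseteq$ direction is immediate since $\pi(\mathcal{T}_{1}\vee\mathcal{T}_{2})$ is a torsion shadow upper-bounding both $\pi(\mathcal{T}_{i})$. For the reverse inclusion, I plan to invoke Proposition~\ref{Smallest Torsion Class}: any $M\in\pi(\mathcal{T}_{1}\vee\mathcal{T}_{2})$ lies in $\mathcal{M}$ and admits a filtration $0=M_{0}\subseteq\cdots\subseteq M_{d}=M$ whose subquotients are quotients of modules of $\mathcal{T}_{1}\cup\mathcal{T}_{2}$. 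Given an upper bound $\mathfrak{S}=\mathcal{S}\cap\mathcal{M}$ of $\pi(\mathcal{T}_{1}),\pi(\mathcal{T}_{2})$ in $\torshad_{\mathcal{M}}(\Lambda)$, I would trace each subquotient back to an element of $\mathcal{T}_{i}$, routed through a module of $\pi(\mathcal{T}_{i})\subseteq\mathfrak{S}$, and then use closure of $\mathcal{S}$ under quotients and extensions in $\modu(\Lambda)$ to place $M$ in $\mathcal{S}\cap\mathcal{M}$. I expect this to be the delicate step, because the intermediate subquotients of the filtration need not lie in $\mathcal{M}$, so the argument must pass through $\mathcal{S}$ itself rather than stay inside $\mathfrak{S}$.

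For the second part, the inclusion $\modu(\Lambda')\subseteq\mathcal{M}$ gives the identity $\mathcal{T}\cap\modu(\Lambda')=(\mathcal{T}\cap\mathcal{M})\cap\modu(\Lambda')$ for every torsion class $\mathcal{T}$ of $\Lambda$. This immediately produces the factorization of the standard surjective lattice map $\tors(\Lambda)\to\tors(\Lambda')$ through $\torshad_{\mathcal{M}}(\Lambda)$ and simultaneously shows that the second factor $\rho:=(-)\cap\modu(\Lambda'):\torshad_{\mathcal{M}}(\Lambda)\to\tors(\Lambda')$ is well-defined on fibers of $\pi$. Surjectivity of $\rho$ follows from surjectivity of the full composite $\rho\pi$. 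Finally, $\rho$ is a lattice map by a short diagram chase: for $\mathfrak{T}_{i}=\pi(\mathcal{T}_{i})$ and $\star\in\{\vee,\wedge\}$ one has
\[\rho(\mathfrak{T}_{1}\star\mathfrak{T}_{2})=\rho(\pi(\mathcal{T}_{1})\star\pi(\mathcal{T}_{2}))=\rho\pi(\mathcal{T}_{1}\star\mathcal{T}_{2})=(\rho\pi)(\mathcal{T}_{1})\star(\rho\pi)(\mathcal{T}_{2})=\rho(\mathfrak{T}_{1})\star\rho(\mathfrak{T}_{2}),\]
using that both $\pi$ and $\rho\pi$ are lattice maps. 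Thus the second part reduces entirely to the first together with the classical restriction of torsion classes along an algebra epimorphism.
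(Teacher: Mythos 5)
Your plan follows the paper for the complete-lattice structure (meets by intersection, top $=\modu(\Lambda)\cap\mathcal{M}$), for meet-preservation and surjectivity of $\pi=(-)\cap\mathcal{M}$, and for the factorization; the diagram chase deducing the lattice-map properties of $\rho=(-)\cap\modu(\Lambda')$ from those of $\pi$ and $\rho\pi$ is a clean reformulation. The genuine gap is in the step you yourself flag as delicate. To show $\pi(\mathcal{T}_1\vee\mathcal{T}_2)\subseteq\mathfrak{S}$ for an arbitrary torsion shadow $\mathfrak{S}=\mathcal{S}\cap\mathcal{M}$ upper-bounding $\pi(\mathcal{T}_1)$ and $\pi(\mathcal{T}_2)$, you would need to place the filtration subquotients $M_i/M_{i-1}\in\mathcal{T}_1\cup\mathcal{T}_2$ into $\mathcal{S}$; but the only data available is $\mathcal{T}_j\cap\mathcal{M}\subseteq\mathcal{S}\cap\mathcal{M}$, which says nothing about modules of $\mathcal{T}_j$ lying outside $\mathcal{M}$, so there is no way to ``route'' an out-of-$\mathcal{M}$ subquotient into $\mathcal{S}$. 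This is not a fixable subtlety: for $\Lambda=k(1\to 2)$, with $\ind(\Lambda)=\{S_1,S_2,P_1\}$, put $\mathcal{M}=\add(P_1)$, $\mathcal{T}_1=\add(S_1)$, $\mathcal{T}_2=\add(S_2)$. Then $\pi(\mathcal{T}_1)=\pi(\mathcal{T}_2)=0$, hence $\pi(\mathcal{T}_1)\vee_{\mathcal{M}}\pi(\mathcal{T}_2)=0$, whereas $\mathcal{T}_1\vee\mathcal{T}_2=\modu(\Lambda)$ (since $P_1$ is an extension of $S_1$ by $S_2$) and $\pi(\mathcal{T}_1\vee\mathcal{T}_2)=\add(P_1)\neq 0$. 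So $\pi$ is not a join-semilattice map for this $\mathcal{M}$, and your Proposition~\ref{Smallest Torsion Class} route cannot succeed as written.

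For comparison, the paper's proof only carries out the join computation for $\rho$, where the hypothesis $\modu(\Lambda')\subseteq\mathcal{M}$ is used essentially in the rewrite $\mathcal{T}\cap\mathcal{M}\cap\modu(\Lambda')=\mathcal{T}\cap\modu(\Lambda')$; the very first displayed equality there, replacing the intersection over all torsion shadows dominating the $\mathfrak{T}_i$ by the intersection over $\mathcal{T}\cap\mathcal{M}$ with $\mathcal{T}\supseteq\mathcal{T}_i$, is a priori only one inclusion, and for $\rho$ the argument can be closed by combining it with the reverse inclusion coming from monotonicity. The paper's remark that $\pi$ is handled ``similarly'' does not go through, for exactly the reason your route fails, and the example above makes this concrete. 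In short: the worry you record is well-founded; the first part of the lemma requires additional hypotheses on $\mathcal{M}$ that are not stated (but do hold for the $\mathcal{M}$ used in Section~\ref{torshad_section}), and neither your proposal nor the lemma as stated supplies an argument covering the general case.
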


\begin{proof}
Given a family of torsion shadows $\{\mathfrak{T}_i\}_{i \in I} \subseteq \torshad_{\mathcal{M}}(\Lambda),$ there exist torsion classes $\{\mathcal{T}_i\}_{i \in I}$ such that $\mathfrak{T}_i = \mathcal{T}_i\cap \mathcal{M}$ for all $i \in I$. By defining $\bigwedge_{i \in I} \mathfrak{T}_i := \bigcap_{i \in I} \mathfrak{T}_i$ and the fact that $\bigcap_{i \in I} \mathcal{T}_i \in \tors(\Lambda)$, it is clear that $\torshad_{\mathcal{M}}(\Lambda)$ is a complete meet-semilattice. Since $\modu(\Lambda) \cap \mathcal{M} \in \torshad_{\mathcal{M}}(\Lambda)$ is the unique maximal element of $\torshad_{\mathcal{M}}(\Lambda)$, we obtain that $\torshad_{\mathcal{M}}(\Lambda)$ is a complete lattice. 

It is straightforward to show that the maps $(-)\cap \mathcal{M}: \tors(\Lambda) \twoheadrightarrow \torshad_{\mathcal{M}}(\Lambda)$ and $(-)\cap \modu(\Lambda'): \torshad_{\mathcal{M}}(\Lambda) \twoheadrightarrow \tors(\Lambda')$ are surjective meet-semilattice maps. We show that $(-)\cap \modu(\Lambda'): \torshad_{\mathcal{M}}(\Lambda) \twoheadrightarrow \tors(\Lambda')$ is a join-semilattice map. The proof that $(-)\cap \mathcal{M}: \tors(\Lambda) \twoheadrightarrow \torshad_{\mathcal{M}}(\Lambda)$ is a join-semilattice map is similar so we omit it.

Let $\{\mathcal{T}_i\cap \mathcal{M}\}_{i\in I} \subset \torshad_\mathcal{M}(\Lambda)$ be a family of torsion shadows. We have 
$$\begin{array}{rcl}
\displaystyle \left(\bigvee_{i \in I}\mathcal{T}_i\cap \mathcal{M} \right) \cap \modu(\Lambda') & = & \displaystyle \left(\mathop{\bigcap_{\mathcal{T}\cap \mathcal{M} \in \torshad_{\mathcal{M}}(\Lambda)}}_{\mathcal{T}_i \subseteq \mathcal{T} \ \forall i \in I}\mathcal{T} \cap \mathcal{M}\right) \cap \modu(\Lambda') \\
& = & \displaystyle \left(\mathop{\bigcap_{\mathcal{T}\in \tors(\Lambda)}}_{\mathcal{T}_i \subseteq \mathcal{T} \ \forall i \in I}\mathcal{T} \cap \mathcal{M}\right) \cap \modu(\Lambda') \\
& = & {\displaystyle \mathop{\bigcap_{\mathcal{T}\in \tors(\Lambda)}}_{\mathcal{T}_i \subseteq \mathcal{T} \ \forall i \in I}\mathcal{T} \cap \modu(\Lambda')} \\
& & \text{(using that $\modu(\Lambda') \subseteq \mathcal{M}$)}\\
& = & {\displaystyle \mathop{\bigcap_{\mathcal{T}\in \tors(\Lambda')}}_{\mathcal{T}_i \cap \modu(\Lambda') \subseteq \mathcal{T} \ \forall i \in I}\mathcal{T} \cap \modu(\Lambda')} \\
& = & \displaystyle \bigvee_{i \in I} \mathcal{T}_i \cap \modu(\Lambda'). \\
\end{array}$$
This shows that $(-)\cap \modu(\Lambda'): \torshad_{\mathcal{M}}(\Lambda) \twoheadrightarrow \tors(\Lambda')$ is a join-semilattice map. 

It is clear that the map $(-)\cap \modu(\Lambda'): \tors(\Lambda) \twoheadrightarrow \tors(\Lambda')$ factors through $\torshad_\mathcal{M}(\Lambda)$.\end{proof}





We now focus on brick gentle algebras $A$ and the associated algebras $\Pi(A)$. For the remainder of the section $A$ denotes a brick gentle algebra. We write an arbitrary arrow of $\Pi(A)$ as $\overline{\gamma}$ where $\overline{\gamma}=\gamma$ or $\overline{\gamma}=\gamma^*$ for some $\gamma \in Q_1$. Let $\widetilde{\Str}(A)$ denote the set of strings $\widetilde{w}=\overline{\gamma}_d^{\epsilon_d} \cdots \overline{\gamma}_2^{\epsilon_2}\overline{\gamma}_1^{\epsilon_1} \in \Str(\Pi(A))$ where $\widetilde{w}$ \textit{specializes} to a string of $A$ (i.e., the sequence of arrows of $Q$ and formal inverses of arrows of $Q$ obtained by replacing every $\gamma_i^*$ in $\widetilde{w}$ by $\gamma_i^{-1}$ and every $(\gamma_i^*)^{-1}$ with $\gamma_i$ is a string in $\Str(A)$). We set $$\mathcal{M}:= \add \bigg( \bigoplus M ( \widetilde{w}) |\, \widetilde{w} \in \widetilde{\Str}(A)\bigg)$$ and for the remainder of the paper, unless specified otherwise, for every brick gentle algebra $A$ we let $\mathcal{M}$ denote this subcategory of $\modu(\Pi(A))$.

Given any string in $\Str(A)$, we can lift it to a string in $\widetilde{\Str}(A)$. First, choose whether to represent the string as $w={\gamma}_d^{\epsilon_d} \cdots {\gamma}_2^{\epsilon_2}{\gamma}_1^{\epsilon_1}$ or as $w^{-1}={\gamma}_1^{-\epsilon_1} \cdots {\gamma}_2^{-\epsilon_{d-1}}{\gamma}_d^{-\epsilon_d}$. Then, replace every $\gamma^{-1}$ with $\gamma^{\ast}$, and let $\widetilde{w}$ denote the resulting string in $\widetilde{\Str}(A)$. By Proposition \ref{Ind-Brick Finite}, every string $\widetilde{w} \in \Str(\Pi(A))$ constructed in this way is self-avoiding. For any $\mathcal{T} \in \tors(\Pi(A))$, we let $\widetilde{\mathfrak{T}}:=\mathcal{T} \cap \mathcal{M}$ denote the corresponding torsion shadow, and $\mathfrak{T} := \text{add}(\bigoplus M(w) \,|\, M(\widetilde{w}) \in \widetilde{\mathfrak{T}}).$

%

We can now state one of the main theorems of this section.

\begin{theorem}\label{Bic-Torshad}
There is a poset isomorphism $\Bic(A) \cong \torshad(\Pi(A))$ and $$\bic(A)= \{\mathfrak{T}\,|\, \widetilde{\mathfrak{T}} \in \torshad(\Pi(A))\}.$$
\end{theorem}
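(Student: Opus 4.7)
The plan is to construct an explicit lattice bijection $\Phi: \Bic(A) \to \torshad_{\mathcal{M}}(\Pi(A))$ by sending a biclosed set $B$ to $\mathcal{T}(B) \cap \mathcal{M}$, where $\mathcal{T}(B) \in \tors(\Pi(A))$ is the smallest torsion class containing the lifted string modules $\{M(\widetilde{w}) : w \in B\}$ (taking both lifts $\widetilde{w}$ and $\widetilde{w^{-1}}$ in $\widetilde{\Str}(A)$). The engine of the argument is a comparison between the combinatorial operation of concatenation in $\Str(A)$ and the homological operation of extension in $\modu(\Pi(A))$. Namely, since every element of $\widetilde{\Str}(A)$ is a \emph{direct} string in the gentle algebra $\Pi(A)$, any decomposition $\widetilde{w} = \widetilde{v}\,\overline{\gamma}\,\widetilde{u}$ gives a short exact sequence
\[ 0 \to M(\widetilde{v}) \to M(\widetilde{w}) \to M(\widetilde{u}) \to 0. \]
Moreover, because $\overline{Q}$ contains $\gamma^*$ alongside every original arrow $\gamma$, each combinatorial concatenation $w = v\gamma^{\pm 1} u$ in $\Str(A)$ can be realized as such an SES after choosing appropriate lifts.

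For the reverse map and surjectivity, given $\widetilde{\mathfrak{T}} = \mathcal{T}\cap\mathcal{M}$ set $B := \{w \in \Str(A) : M(\widetilde{w}) \in \widetilde{\mathfrak{T}}\}$. Closure of $B$ under concatenation follows from extension-closure of $\mathcal{T}$ applied to the SES above. Closure of $B^c$ follows dually from quotient-closure: whenever $w = v\gamma^{\pm 1}u \in B$, one of $M(\widetilde{u})$, $M(\widetilde{v})$ is a quotient of $M(\widetilde{w})$, so it lies in $\mathcal{T}\cap\mathcal{M}$, placing $u$ or $v$ in $B$. Hence $B \in \Bic(A)$, and by construction $\Phi(B) = \widetilde{\mathfrak{T}}$. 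Order preservation in both directions is immediate from the definitions.

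For injectivity I need to verify that the indecomposables of $\Phi(B)$ among modules in $\mathcal{M}$ are exactly $\{M(\widetilde{w}) : w \in B\}$. By Proposition~\ref{Smallest Torsion Class}, each object of $\mathcal{T}(B)$ admits a filtration whose subquotients are quotients of the generating modules $M(\widetilde{w})$, $w \in B$. Thus any indecomposable $M(\widetilde{z}) \in \mathcal{M} \cap \mathcal{T}(B)$ is built by iterated extensions of such subquotients, and the concatenation/extension dictionary forces $z$ to be a concatenation of strings already in $B$; closure of $B$ then gives $z \in B$. This step is the main obstacle: it requires controlling extensions among string modules over $\Pi(A)$ so that nothing outside $B$ is inadvertently produced. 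The key technical ingredient is that Proposition~\ref{Ind-Brick Finite} (applied to the strings $\widetilde{w}$) guarantees self-avoidance, so that all the relevant Hom and Ext computations in $\Pi(A)$ restricted to $\mathcal{M}$ mirror the combinatorial string-concatenation structure in $\Str(A)$.

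Finally, the second assertion $\bic(A) = \{\mathfrak{T} : \widetilde{\mathfrak{T}} \in \torshad(\Pi(A))\}$ follows by composing $\Phi$ with the poset isomorphism $B \mapsto \add(\bigoplus_{w \in B} M(w))$ from $\Bic(A)$ to $\bic(A)$ established earlier, since the passage $\widetilde{\mathfrak{T}} \mapsto \mathfrak{T}$ (replacing each $M(\widetilde{w})$ by $M(w)$) is precisely the composition of $\Phi^{-1}$ with that isomorphism.
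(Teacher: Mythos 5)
Your proposed forward map is incorrect, and the error lies precisely where you flag the main difficulty — the relationship between lifts and the combinatorics of $B$. You lift each $w\in B$ to its two ``fully direct'' strings $\widetilde{w}$ and $\widetilde{w^{-1}}$ (replacing every inverse arrow by its starred partner). But a string $w\in\Str(A)$ has many lifts in $\widetilde{\Str}(A)$ — indeed Lemma~\ref{Lemma_labels_to_Pi_strings} exhibits a \emph{bijection} $\str(-):\mathcal{S}\to\widetilde{\Str}(A)$, so there is one lift for each valid choice of split-set $\mathcal{D}$ — and the correct lift to use for a given $B$ is $\str(w_\mathcal{D})$ where $\mathcal{D}$ is the set of splits of $w$ lying in $B$. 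The fully-direct lift is the lift $\str(w_{\mathcal{D}_0})$ for one specific $\mathcal{D}_0$ which typically does \emph{not} agree with the set of splits in $B$. Concretely, for the quiver of Figure~\ref{J_example} with $B = J(\alpha^{-1}\beta\gamma^{-1}_{\{e_1,e_4,\alpha^{-1}\}})=\{e_1,e_4,\alpha^{-1},\delta,\alpha^{-1}\beta\gamma^{-1}\}$, your generators for $w=\alpha^{-1}\in B$ are $M(\alpha)$ and $M(\alpha^*)$; but $M(\alpha^*)$ has $M(e_2)$ as a quotient, so $M(e_2)$ lies in the torsion class $\mathcal{T}(B)$ you generate and hence in $\Phi(B)=\mathcal{T}(B)\cap\mathcal{M}$, even though $e_2\notin B$. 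This breaks the crucial identity $B(\Phi(B))=B$, so your two maps are not inverse to each other. (Compare with the paper's $\widetilde{\mathfrak{T}}(B)$, whose indecomposables — by Lemma~\ref{lemma_desc_T(B)} — are exactly the $M(\widetilde{u})\in\mathcal{M}$ \emph{all} of whose indecomposable quotients specialize into $B$; so $M(\alpha^*)$ is correctly excluded because its quotient $M(e_2)$ specializes to $e_2\notin B$.)

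The underlying misconception is that the choice of lift encodes quotient-module data: for $\str(w_\mathcal{D})$, the modules $M(\str(u_{\mathcal{D}(u)}))$ with $u\in\mathcal{D}$ are precisely its proper indecomposable quotients, matching the requirement that $\mathcal{D}\subseteq B$. The fully-direct lift forces one particular quotient structure that need not be compatible with $B$. This is exactly why the paper builds the forward map from the CU-labeling and the labels $w_\mathcal{D}\in\lambda_\downarrow(B)$, rather than from a single unconditional lift of each string. Your injectivity paragraph, which asserts that ``the concatenation/extension dictionary forces $z$ to be a concatenation of strings already in $B$,'' also glosses over this: the dictionary holds only when the lift is chosen to match $B$, and verifying it requires essentially the content of Lemmas~\ref{lemma_desc_T(B)} and~\ref{biclosed map} and Proposition~\ref{Prop_inverses}, not just self-avoidance from Proposition~\ref{Ind-Brick Finite}. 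Your closure/coclosure checks and the reduction of the second assertion to the first are structurally fine once the lift is corrected, but as written the argument does not yield a bijection.
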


The proof of the theorem is a consequence of the lemmas and proposition that we now prove. To state these results, we define two maps $$\widetilde{\mathfrak{T}}(-):\Bic(A) \rightarrow \torshad(\Pi(A)) \qquad \text{and} \qquad B(-): \torshad(\Pi(A))\rightarrow \Bic(A).$$ 

We first define the map $\widetilde{\mathfrak{T}}(-)$ by connecting the CU-labeling of $\Bic(A)$ with the strings in $\widetilde{\Str}(A)$. Each label $w_{\mathcal{D}} = w_{\{w^1,\ldots, w^d\}} \in \mathcal{S}$ with $w = \gamma_d^{\epsilon_d}\cdots \gamma_1^{\epsilon_1}$ gives rise to a string $\text{str}(w_{\mathcal{D}}) = \overline{\gamma_d}^{\epsilon_d^\prime}\cdots \overline{\gamma_1}^{\epsilon_1^\prime} \in \widetilde{\Str}({A})$. Define $\overline{\gamma_i}^{\epsilon_i^\prime} := \gamma_i$ (resp., $\overline{\gamma_i}^{\epsilon_i^\prime} = \gamma_i^*$) if $w = u\gamma_iw^j$ (resp., if $w = u\gamma_i^{-1}w^j$) for some $j \in \{1, \ldots, d\}$ and some (possibly empty) string $u \in \Str(A)$. Similarly, define $\overline{\gamma_i}^{\epsilon_i^\prime} := \gamma_i^{-1}$ (resp., $\overline{\gamma_i}^{\epsilon_i^\prime} := (\gamma_i^{*})^{-1}$) if $w = w^j\gamma_i^{-1}u$ (resp., $w = w^j\gamma_i u$) for some $j \in \{1,\ldots, d\}$ and some (possibly empty) string $u \in \Str(A)$. Recall that, by the definition of $w_{\{w^1,\ldots, w^d\}}$, no two strings $w^j, w^{j^\prime} \in \{w^1, \ldots, w^d\}$ satisfy $w = w^j\gamma^{\pm 1} w^{j^\prime}$ for any $\gamma \in Q_1$. Therefore, the map $\text{str}(-): \mathcal{S}\to \widetilde{\Str}(A)$ is well-defined.

The following lemma is easily verified.


\begin{lemma}\label{Lemma_labels_to_Pi_strings}
The map $\str(-): \mathcal{S} \to \widetilde{\Str}(A)$ sending a label to the corresponding string in $\Pi(A)$ is a bijection. 
\end{lemma}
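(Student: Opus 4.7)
The plan is to exhibit an explicit inverse map $\Phi : \widetilde{\Str}(A) \to \mathcal{S}$ obtained by reading the recipe defining $\str$ in reverse. Given $\widetilde{w} = \overline{\gamma_d}^{\epsilon'_d}\cdots\overline{\gamma_1}^{\epsilon'_1} \in \widetilde{\Str}(A)$ with specialization $w = \gamma_d^{\epsilon_d}\cdots\gamma_1^{\epsilon_1} \in \Str(A)$, the sign $\epsilon'_i \in \{\pm 1\}$ encodes on which side of the $i$-th arrow of $w$ the selected split lies: if $\epsilon'_i = +1$, so that $\overline{\gamma_i}^{\epsilon'_i}$ is either $\gamma_i$ or $\gamma_i^*$, let $w^i$ be the suffix of $w$ strictly to the right of position $i$; if $\epsilon'_i = -1$, let $w^i$ be the prefix strictly to the left. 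Set $\Phi(\widetilde{w}) := (w, \{w^1, \ldots, w^d\})$.

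To see that $\Phi$ lands in $\mathcal{S}$, each $w^i$ is by construction a split of $w$, and $\{w^1, \ldots, w^d\}$ selects exactly one split from each of the $d$ breaks of $w$, so no two distinct elements lie in a common break. These $d$ splits are pairwise distinct because by Proposition~\ref{Ind-Brick Finite} the string $w$ is self-avoiding, which forces all prefixes and all suffixes of $w$ to be distinct, both among themselves and from one another (even after identifying strings with their inverses). The map $\Phi$ descends to equivalence classes since reversing the orientation of $\widetilde{w}$ simultaneously negates every $\epsilon'_i$ and swaps prefixes with suffixes. Direct comparison of the two recipes then yields $\str \circ \Phi = \mathrm{id}$ and $\Phi \circ \str = \mathrm{id}$.

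The main obstacle is verifying that $\str(w_\mathcal{D})$ actually lies in $\widetilde{\Str}(A)$, i.e., that it satisfies conditions (P1) and (P2) as a string in $\Pi(A)$. Condition (P1) follows from self-avoidance, which forces two consecutive arrows of $w$ to involve distinct underlying arrows of $Q$, hence distinct and non-cancelling arrows in $\overline{Q}$. For (P2), every generator of $\overline{I} = \langle \beta\alpha, \alpha^*\beta^* \mid \beta\alpha \in I \rangle$ is either purely unstarred or purely starred, so only a length-two direct subpath of $\str(w_\mathcal{D})$ whose two arrows are both unstarred or both starred can pose a problem. A short case check using the sign convention defining $\str$ shows that in the former case the subpath corresponds to a length-two direct subpath of $w$ itself, and in the latter to a length-two direct subpath of $w^{-1}$; since $w \in \Str(A)$ avoids $I$ in both orientations, the subpath does not lie in $\overline{I}$, and we conclude $\str(w_\mathcal{D}) \in \widetilde{\Str}(A)$.
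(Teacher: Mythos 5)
Your proof is correct and complete. The paper itself offers no proof of this lemma (it is dismissed as "easily verified"), so there is nothing to compare against on the paper's side; what you have done is supply the routine argument the authors had in mind, and you have done it carefully. The explicit inverse $\Phi$ encoding "$\epsilon'_i=+1$ picks the right-hand split, $\epsilon'_i=-1$ picks the left-hand split" is exactly the reverse of the paper's sign convention for $\str$, and your verification that $\Phi$ lands in $\mathcal{S}$ (distinctness of the $d$ splits via self-avoidance from Proposition~\ref{Ind-Brick Finite}) and respects the inversion equivalence is the right thing to check. You also correctly identify and fill an implicit gap in the paper's definition: it asserts $\str(w_{\mathcal{D}})\in\widetilde{\Str}(A)$ without proof, and your verification of (P1) via self-avoidance and (P2) via homogeneity of the generators of $\overline{I}$ is sound.

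One small imprecision worth fixing in the (P2) check: you claim the "both unstarred" case reduces to a subpath of $w$ and the "both starred" case to a subpath of $w^{-1}$. This is correct when the two consecutive letters have $\epsilon'_i=\epsilon'_{i+1}=+1$, i.e.\ for direct subpaths of the chosen representative of $\str(w_{\mathcal{D}})$. For direct subpaths of $\str(w_{\mathcal{D}})^{-1}$ (i.e.\ $\epsilon'_i=\epsilon'_{i+1}=-1$), the attribution flips: "both unstarred" reduces to $w^{-1}$ and "both starred" to $w$. This does not affect the conclusion, since you already invoke that $w$ avoids $I$ in both orientations, but the statement as written is only checking one orientation of $\str(w_{\mathcal{D}})$, and it would be cleaner to say explicitly that the four sign/star combinations all reduce, after possibly replacing $w$ by $w^{-1}$, to a length-two direct subpath of a string in $\Str(A)$.
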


We also state the following lemma which shows that every string module defined by a string in $\widetilde{\Str}(A)$ is brick. 

\begin{lemma}\label{bricks_in_Pi}
Given any label $w_\mathcal{D}$, the string module $M(\text{str}(w_{\mathcal{D}})) \in \mathcal{M}$ is a brick as a $\Pi(A)$-module. 
\end{lemma}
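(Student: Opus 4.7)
The plan is to first show that $\str(w_\mathcal{D})$ is self-avoiding as a walk in the doubled quiver $\overline{Q}$, and then to deduce from this that the corresponding string module has a one-dimensional endomorphism ring via a direct calculation.

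For self-avoidance, I would observe that $\str(w_\mathcal{D})$ is obtained from $w$ by substituting, at each position, the arrow $\gamma_i$ or its formal inverse by one of $\gamma_i$, $\gamma_i^{*}$, $\gamma_i^{-1}$, or $(\gamma_i^{*})^{-1}$, according to the position of the nearest split in $\mathcal{D}$. Since $s(\gamma_i^{*}) = t(\gamma_i)$ and $t(\gamma_i^{*}) = s(\gamma_i)$, each of the four possible substitutions preserves the direction of traversal and hence the sequence of vertices visited by the walk. Because $A$ is a brick gentle algebra, Proposition~\ref{Ind-Brick Finite} guarantees that $w$ is self-avoiding in $Q$; consequently $\str(w_\mathcal{D})$ is self-avoiding in $\overline{Q}$.

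Self-avoidance forces every vertex-space $V_i$ of $M(\str(w_\mathcal{D}))$ to have dimension at most one. Any endomorphism $f \in \End_{\Pi(A)}(M(\str(w_\mathcal{D})))$ is therefore determined by a scalar $c_i \in k$ at each visited vertex $i$ (and is automatically zero elsewhere). For every arrow $\alpha$ appearing in $\str(w_\mathcal{D})$ the structure map $\varphi_\alpha$ is nonzero, so the commutation relation $\varphi_\alpha \circ f_{s(\alpha)} = f_{t(\alpha)} \circ \varphi_\alpha$ forces $c_{s(\alpha)} = c_{t(\alpha)}$. Since $\str(w_\mathcal{D})$ is a connected walk visiting every such vertex, all the scalars must coincide, yielding $\End_{\Pi(A)}(M(\str(w_\mathcal{D}))) \cong k$, as desired.

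The main (albeit rather mild) obstacle is that $\Pi(A)$ need not itself be a string algebra: the doubling construction can produce vertices with more than two incoming or outgoing arrows, violating axiom (S1). Thus one cannot simply cite a known brick criterion for string algebras, and some care is needed to check that the string-module construction still yields a valid $\Pi(A)$-module. However, the length-two generators of $\overline{I}$ act as zero on $M(\str(w_\mathcal{D}))$ by property (P2) of strings (nonzero compositions $\varphi_\beta \varphi_\alpha$ can only arise from direct subpaths of $\str(w_\mathcal{D})$), so the construction is legitimate and the argument above goes through without modification.
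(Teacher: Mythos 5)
Your proof is correct and follows essentially the same approach as the paper: establish that $\str(w_\mathcal{D})$ is self-avoiding (the paper invokes Proposition~\ref{Ind-Brick Finite} for this in the paragraph preceding the lemma), conclude that each vertex space is one-dimensional, and then use the nonzero structure maps along the connected walk to force all the scalars to agree. You add two useful details the paper leaves implicit — the explicit verification that the arrow-to-(co)star substitutions preserve the vertex sequence, and the observation that $\Pi(A)$ need not be a string algebra so one should check the relations of $\overline{I}$ act trivially on $M(\str(w_\mathcal{D}))$ — but the underlying argument is the same.
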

\begin{proof}
Let $\varphi = (\varphi_i)_{i \in \overline{Q}_0} \in \text{End}_{\Pi(A)}(M(\text{str}(w_{\mathcal{D}})))$ be an endomorphism of the quiver representation $M(\text{str}(w_\mathcal{D}))$. As the string $\text{str}(w_{\mathcal{D}})$ visits a vertex of $\overline{Q}$ at most once, each linear map $\varphi_i$ is a scalar transformation. One checks that there exists $\lambda \in k$ such that $\varphi_i = \lambda\text{id}_k$ for all vertices $i$ appearing in $\text{str}(w_\mathcal{D})$. We obtain that $\text{End}_{\Pi(A)}(M(\text{str}(w_{\mathcal{D}}))) = k$.
\end{proof}

Now, given $B \in \Bic(A)$, define $\widetilde{\mathfrak{T}}(B) := \mathcal{T}^* \cap \mathcal{M}$ where $\mathcal{T}^*$ is the minimal torsion class in $\modu(\Pi(A))$ that contains $\text{gen}\left(\bigoplus M(\text{str}(w_\mathcal{D})) \ | \ w_\mathcal{D} \in \lambda_\downarrow(B)\right)$. By definition, $\widetilde{\mathfrak{T}}(B)$ is a torsion shadow of $A$. Moreover, we have the following explicit description of $\widetilde{\mathfrak{T}}(B)$.

\begin{lemma}\label{lemma_desc_T(B)}
Let $B \in \Bic(A)$ be a biclosed set. The indecomposable objects of $\widetilde{\mathfrak{T}}(B)$ are exactly the string modules $M(\widetilde{w}) \in \mathcal{M}$ all of whose indecomposable quotients are string modules $M(\widetilde{u}) \in \mathcal{M}$ where $\widetilde{u}$ specializes to a string $u \in B$. In addition, for any element $u \in B$, there exists a string module $M(\widetilde{u}) \in \widetilde{\mathfrak{T}}(B)$ such that $\widetilde{u}$ specializes to $u$.
\end{lemma}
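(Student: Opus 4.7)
The plan is to combine Proposition~\ref{Smallest Torsion Class}, which describes $\mathcal{T}^*$ as the collection of modules admitting a filtration by epimorphic images of the generators $M(\str(w_\mathcal{D}))$ for $w_\mathcal{D}\in\lambda_\downarrow(B)$, with the combinatorial description of submodules, quotients, and extensions of string modules over $\Pi(A)$. I first record that $\mathcal{M}$ is closed under quotients in $\modu(\Pi(A))$: a quotient of a string module in $\mathcal{M}$ is a direct sum of string modules whose underlying strings are substrings of the original, and such substrings remain in $\widetilde{\Str}(A)$ since their specializations are substrings of a string in $\Str(A)$. Consequently the torsion shadow $\widetilde{\mathfrak{T}}(B)=\mathcal{T}^*\cap\mathcal{M}$ is closed under quotients in $\modu(\Pi(A))$.

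The central combinatorial claim I would prove next is that, for every $w_\mathcal{D}\in\lambda_\downarrow(B)$, each indecomposable quotient $M(\widetilde{u})$ of $M(\str(w_\mathcal{D}))$ satisfies $u\in B$. Indecomposable quotient substrings of $\str(w_\mathcal{D})$ occupy an interval $\{a,\ldots,b\}$ of vertex positions subject to the standard orientation conditions on the two boundary arrows; unraveling the definition of $\str$, these conditions are equivalent to the requirements that the suffix split of $w$ at break $a$ (when $a>0$) and the prefix split of $w$ at break $b+1$ (when $b<d$) both belong to $\mathcal{D}$. Because $w_\mathcal{D}\in\lambda_\downarrow(B)$, the set $\mathcal{D}$ is exactly the set of splits of $w$ contained in $B$, so the complementary splits at these breaks lie in $B^c$. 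Writing the prefix split of $w$ at break $b+1$, which is in $\mathcal{D}\subseteq B$, as $P'\gamma_a^{\pm1}u$, where $P'$ is the complementary prefix split at break $a$ (hence in $B^c$), coclosure of $B$ forces $u\in B$. The degenerate cases $a=0$ or $b=d$ are immediate since $u$ then equals either $w$ or a split in $\mathcal{D}$.

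For the forward direction of the first assertion, I would induct on the length of a filtration of an indecomposable $M(\widetilde{v})\in\widetilde{\mathfrak{T}}(B)$ given by Proposition~\ref{Smallest Torsion Class}. The base case reduces to the combinatorial claim above. The inductive step relies on the standard fact that a nonsplit extension of two string modules whose middle term is again a string module corresponds to a concatenation of the two underlying strings via a single arrow of $\overline{Q}$; specialization turns this concatenation into one via an arrow of $Q$, which closure of $B$ preserves. Carefully tracking when such extensions produce string modules in $\mathcal{M}$ versus band modules, which lie outside $\mathcal{M}$ and may be discarded when intersecting with $\mathcal{M}$, is the main technical obstacle of the proof.

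For the second assertion, given $u\in B$, coclosure of $B$ guarantees that at each break of $u$ at least one of the two splits belongs to $B$; choosing such a split at every break produces a set $\mathcal{D}_u$ meeting the requirements of a label, so Lemma~\ref{Lemma_labels_to_Pi_strings} yields a lift $\widetilde{u}:=\str(u_{\mathcal{D}_u})$. To conclude $M(\widetilde{u})\in\mathcal{T}^*$, I would build $M(\widetilde{u})$ iteratively from the generators via quotients and extensions, using the canonical join representation $B=\bigvee_{w_\mathcal{D}\in\lambda_\downarrow(B)}J(w_\mathcal{D})$: when $u\in J(w_\mathcal{D})$ is a split of $w$ recorded in $\mathcal{D}$, the choice of $\mathcal{D}_u$ dictated by $B$ agrees with the data of the corresponding quotient substring of $\str(w_\mathcal{D})$, identifying $M(\widetilde{u})$ as an indecomposable quotient of the generator; more general $u\in B$ are produced by iterated concatenations of such pieces, which translate into extensions inside $\mathcal{T}^*$.
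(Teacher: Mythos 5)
Your overall strategy is the same as the paper's: characterize the indecomposable quotients of the generators $M(\str(w_\mathcal{D}))$ combinatorially, then propagate membership in $\widetilde{\mathfrak{T}}(B)$ through concatenations/extensions using Proposition~\ref{Smallest Torsion Class}, and appeal to closure and coclosure of $B$. Your key combinatorial claim --- that every indecomposable quotient of $M(\str(w_\mathcal{D}))$ with $w_\mathcal{D}\in\lambda_\downarrow(B)$ specializes to an element of $B$ because the boundary orientations of a quotient interval force the $\mathcal{D}$-split at one end to decompose as (complementary split at the other end, in $B^c$)$\,\gamma^{\pm1}\,u$ --- is correct in substance (modulo index conventions) and is exactly the content the paper dispatches with the phrase ``observe that from the definition of $\str(-)$.'' Your observation that $\mathcal{M}$ is closed under quotients, so the ``$\subseteq$'' direction reduces to showing that indecomposables of $\widetilde{\mathfrak{T}}(B)$ themselves specialize to $B$, is also the right reduction.

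There are, however, two gaps. First, the ``$\supseteq$'' direction of the first assertion --- if $M(\widetilde{w}) \in \mathcal{M}$ and every indecomposable quotient of $M(\widetilde{w})$ specializes to a string in $B$, then $M(\widetilde{w}) \in \widetilde{\mathfrak{T}}(B)$ --- is simply not addressed in your proposal. This is the most substantial part of the paper's proof: it writes $w$ as a concatenation $u^{i_1}\gamma_{i_1}^{\pm 1}\cdots\gamma_{i_{k-1}}^{\pm 1}u^{i_k}$ with $u^{i_j}\in J(w^{i_j}_{\mathcal{D}^{i_j}})$, lifts this to a decomposition of $\widetilde{w}$, and then does a careful induction on $k$, selecting a piece $\widetilde{u}^{i_j}$ that is a \emph{submodule} of $M(\widetilde{w})$ so that all three pieces (left, middle, right) can be handled by induction and reassembled using extension-closure of $\mathcal{T}^*$. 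This is not an automatic consequence of your second-assertion argument, because your construction fixes one particular lift of $u$ ``dictated by $B$,'' whereas the ``$\supseteq$'' direction must handle an \emph{arbitrary} lift $\widetilde{w}$ satisfying the quotient condition, and when both splits at some break of $w$ lie in $B$ that lift is not unique. Second, your ``$\subseteq$'' direction via induction on filtration length has a technical problem you flag but do not resolve: the intermediate stages $M_i$ of a filtration furnished by Proposition~\ref{Smallest Torsion Class} need not lie in $\mathcal{M}$ (they may be decomposable, or involve strings outside $\widetilde{\Str}(A)$), so the concatenation-plus-closure argument does not apply directly to them. The paper sidesteps this by inducting on the length of the concatenation of the \emph{final} string $\widetilde{w}$ rather than on the filtration, staying inside $\mathcal{M}$ throughout. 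Adopting that induction scheme would simultaneously fill both gaps.
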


\begin{example}
Let $Q$ be the quiver appearing in Figure~\ref{J_example}, and let $J(\alpha^{-1}\beta\gamma_{\{e_1,e_4,\alpha\}})$ be the join-irreducible biclosed set from Example~\ref{example_of_set_J}. Here we have that $$\widetilde{\mathfrak{T}}(J(\alpha^{-1}\beta\gamma_{\{e_1,e_4,\alpha\}})) = \add(\bigoplus M(w) \ | \ w \in \{e_1,e_4, \alpha^{-1}, \delta, \delta^*, \alpha^{-1}(\beta^*)^{-1}\gamma^*\}).$$ Here $\text{str}(\alpha^{-1}\beta\gamma_{\{e_1,e_4,\alpha\}}) = \alpha^{-1}(\beta^*)^{-1}\gamma^*$.
\end{example}

\begin{proof}
We can write $B = \bigvee_i J(w^i_{\mathcal{D}^i})$ where the join is taken over all $w^i_{\mathcal{D}^i} \in \lambda_\downarrow(B).$ Observe that from the definition of $\text{str}(-)$, for each $i$, every indecomposable quotient of $M(\text{str}(w^i_{\mathcal{D}^i}))$ is a string module whose string specializes to an element of $B$. We also know from the definition of $\widetilde{\mathfrak{T}}(B)$ that $M(\text{str}(w^i_{\mathcal{D}^i})) \in \widetilde{\mathfrak{T}}(B)$ for all $i$. It therefore follows that for any element of $u \in J(w^i_{\mathcal{D}^i})$ there exists a string module $M(\widetilde{u}) \in \widetilde{\mathfrak{T}}(B)$ such that $\widetilde{u}$ specializes to $u$. Moreover, any such $M(\widetilde{u})$ has the desired property since $M(\text{str}(w^i_{\mathcal{D}^i}))$ has the desired property. 

Now, we see that an arbitrary indecomposable object of $\widetilde{\mathfrak{T}}(B)$ is a string module $M(\widetilde{u})$ where $\widetilde{u} = \widetilde{u}^{i_1}\overline{\gamma_{i_1}}^{\pm 1}\widetilde{u}^{i_2}\cdots \widetilde{u}^{i_{k-1}}\overline{\gamma_{i_{k-1}}}^{\pm 1}\widetilde{u}^{i_k}$ where each $\widetilde{u}^{i_j}$ specializes to a string $u^{i_j} \in J(w^{i_j}_{\mathcal{D}^{i_j}})$ and where there is a surjection $M(\text{str}(w^{i_j}_{\mathcal{D}^{i_j}})) \twoheadrightarrow M(\widetilde{u}^{i_j})$ for all $j$. Since each module $M(\widetilde{u}^{i_j})$ has the desired property, the module $M(\widetilde{u})$ does as well.

Conversely, suppose $M(\widetilde{w})$ is any string module with the property in the statement of the lemma. Since $\widetilde{w}$ specializes to a string $w \in B$, we know that $w = u^{i_1}{\gamma_{i_1}}^{\pm 1}{u}^{i_2}\cdots {u}^{i_{k-1}}{\gamma_{i_{k-1}}}^{\pm 1}{u}^{i_k}$ where $u^{i_j} \in J(w^{i_j}_{\mathcal{D}^{i_j}})$ for all $j$. We thus have $\widetilde{w} = \widetilde{u}^{i_1}\overline{\gamma_{i_1}}^{\pm 1}\widetilde{u}^{i_2}\cdots \widetilde{u}^{i_{k-1}}\overline{\gamma_{i_{k-1}}}^{\pm 1}\widetilde{u}^{i_k}$ where $\widetilde{u}^{i_j}$ specializes to $u^{i_j} \in J(w^{i_j}_{\mathcal{D}^{i_j}})$ for all $j$.

It remains to show that for each $j$, there is a surjection $M(\text{str}(w^{i_j}_{\mathcal{D}^{i_j}})) \twoheadrightarrow M(\widetilde{u}^{i_j})$. We prove this by induction on $k$. If $k = 1$, then $\widetilde{w} = \widetilde{u}^{i_1}$. Here, the result follows from the fact that $M(\widetilde{u}^{i_1})$ has the property in the statement of the lemma. 

Next, suppose that $k > 1$ and that the result holds for all $k^\prime < k$. Observe that there exists $j$ such that $M(\widetilde{u}^{i_j})$ is a submodule of $M(\widetilde{w})$. Now apply the inductive hypothesis to the modules $$\begin{array}{l}M(\widetilde{u}^{i_1}\overline{\gamma_1}^{\pm 1} \widetilde{u}^{i_2}\cdots \widetilde{u}^{i_{j-2}}\overline{\gamma_{j-2}}^{\pm 1}\widetilde{u}^{i_{j-1}}),\\
M(\widetilde{u}^{i_{j+1}}\overline{\gamma_{j+1}}^{\pm 1} \widetilde{u}^{i_{j+2}}\cdots \widetilde{u}^{i_{k-1}}\overline{\gamma_{k-1}}^{\pm 1}\widetilde{u}^{i_{k}}),\\
M(\widetilde{u}^{i_j}) \end{array}$$ to obtain that each belongs to $\widetilde{\mathfrak{T}}(B)$. Since ${\mathcal{T}}^*$ is extension-closed and $M(\widetilde{w}) \in \mathcal{M}$, we now have that $M(\widetilde{w}) \in \widetilde{\mathfrak{T}}(B)$.\end{proof}

Next, let $\widetilde{\mathfrak{T}} \in \torshad(\Pi(A))$ be given. By Lemma~\ref{Lemma_labels_to_Pi_strings}, we have that for any indecomposable object $M \in \widetilde{\mathfrak{T}}$ there is a unique label $w_\mathcal{D} \in \mathcal{S}$ such that $M\cong M(\text{str}(w_\mathcal{D}))$. We define $B(\widetilde{\mathfrak{T}}) := \{w \in \Str(A) \ | \ M(\text{str}(w_\mathcal{D})) \in \widetilde{\mathfrak{T}}\}.$


\begin{lemma}\label{biclosed map}
For any $\widetilde{\mathfrak{T}} \in \torshad(\Pi(A))$, the set of strings $B(\widetilde{\mathfrak{T}})$ is a biclosed set.
\end{lemma}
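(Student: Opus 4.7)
The plan is to verify that $B := B(\widetilde{\mathfrak{T}})$ is closed and that its complement is closed, each conclusion being a direct consequence of a closure property of torsion classes. Write $\widetilde{\mathfrak{T}} = \mathcal{T} \cap \mathcal{M}$ with $\mathcal{T} \in \tors(\Pi(A))$. By Lemma~\ref{Lemma_labels_to_Pi_strings}, membership of a string $w$ in $B$ is equivalent to the existence of a lift $\widetilde{w} \in \widetilde{\Str}(A)$ of $w$ with $M(\widetilde{w}) \in \widetilde{\mathfrak{T}}$.

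For closedness, suppose $u, v \in B$ and $w = v\gamma^{\epsilon} u \in \Str(A)$ for some $\gamma \in Q_1$ and $\epsilon \in \{\pm 1\}$. Choose lifts $\widetilde{u}, \widetilde{v} \in \widetilde{\Str}(A)$ with $M(\widetilde{u}), M(\widetilde{v}) \in \widetilde{\mathfrak{T}}$. The key step is to produce a lift $\overline{\gamma} \in \overline{Q}_1 \sqcup \overline{Q}_1^{-1}$ of $\gamma^{\epsilon}$ such that $\widetilde{w} := \widetilde{v}\,\overline{\gamma}\,\widetilde{u}$ lies in $\widetilde{\Str}(A)$. Once that is done, the standard decomposition of a string module at an interior arrow yields a short exact sequence of $\Pi(A)$-modules
$$0 \longrightarrow M(X_1) \longrightarrow M(\widetilde{w}) \longrightarrow M(X_2) \longrightarrow 0, \qquad \{X_1, X_2\} = \{\widetilde{u}, \widetilde{v}\},$$
so extension-closure of $\mathcal{T}$ places $M(\widetilde{w})$ in $\mathcal{T} \cap \mathcal{M} = \widetilde{\mathfrak{T}}$ and hence $w \in B$.

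For coclosedness of $B$, suppose $w = v\gamma^{\epsilon} u \in B$, witnessed by a lift $\widetilde{w} \in \widetilde{\Str}(A)$. Writing $\widetilde{w} = \widetilde{v}\,\overline{\gamma}\,\widetilde{u}$ with $\overline{\gamma}$ the lift of $\gamma^{\epsilon}$ appearing in $\widetilde{w}$ and $\widetilde{u}, \widetilde{v} \in \widetilde{\Str}(A)$ the induced substrings, the same short exact sequence exhibits $M(X_2)$ as a quotient of $M(\widetilde{w})$. Quotient-closure of $\mathcal{T}$ places $M(X_2) \in \widetilde{\mathfrak{T}}$, whence one of $u, v$ lies in $B$.

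The main obstacle is the lift-existence step in the closedness direction. The two candidate lifts of $\gamma^{\epsilon}$ are $\gamma, (\gamma^*)^{-1}$ (when $\epsilon = 1$) or $\gamma^{-1}, \gamma^*$ (when $\epsilon = -1$), while the relations of $\overline{I}$ come in the dual pairs $\{\beta\alpha, \alpha^*\beta^*\}$ indexed by the generators $\beta\alpha$ of $I$. The approach is a case analysis at the junction vertex on the last letter of $\widetilde{u}$ and the first letter of $\widetilde{v}$, invoking the gentle conditions (G1) and (G2) together with the fact that $w$ is already a valid string of $A$ (which excludes the original relation $\beta\alpha$ at the junction), to certify that at least one of the two candidate lifts avoids every element of $\overline{I}$ at the junction, producing the desired $\widetilde{w} \in \widetilde{\Str}(A)$.
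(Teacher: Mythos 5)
Your overall architecture matches the paper's: concatenation in $\Str(A)$ lifted to a concatenation in $\widetilde{\Str}(A)$, extension-closure of $\mathcal{T}$ giving closedness, and the string-module short exact sequence at an interior arrow giving coclosedness via quotient-closure. The coclosedness half is complete.

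The gap is exactly the step you flag: you do not produce the lift $\overline{\gamma}$, you only describe a plan to produce it, and the plan is both unexecuted and more complicated than it needs to be. Two issues with the plan as stated. First, ``a case analysis at the junction vertex'' is a local-at-one-end check, but a single choice of $\overline{\gamma}$ must simultaneously clear the constraint at $t(\widetilde{u})$ (with the last letter of $\widetilde{u}$) and the constraint at $s(\widetilde{v})$ (with the first letter of $\widetilde{v}$); a priori one candidate could fail on the left and the other on the right, so you would still owe an argument that this conflict never occurs. Second, there is no need for a case split at all. The paper simply takes the \emph{orientation-preserving} lift: $\overline{\gamma}=\gamma$ when $\epsilon=1$ and $\overline{\gamma}=\gamma^{-1}$ when $\epsilon=-1$, and declares that $\mathrm{str}(w_{\mathcal{D}})\,\gamma\,\mathrm{str}(w'_{\mathcal{D}'})\in\widetilde{\Str}(A)$. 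This always works for the following reason, which is the observation you should supply: the generators $\beta\alpha$ and $\alpha^{*}\beta^{*}$ of $\overline{I}$ never mix an original arrow with a starred one. Hence a junction where $\gamma\in Q_1$ meets a starred letter (from $Q_1^{*}$ or $(Q_1^{*})^{-1}$) cannot form a length-two path lying in $\overline{I}$, while a junction where $\gamma$ meets an unstarred letter (from $Q_1$ or $Q_1^{-1}$) is literally the same pair of letters as in $w\gamma^{\epsilon}w'$, whose (P1) and (P2) conditions hold in $A$ and therefore in $\Pi(A)$. The same reasoning handles (P1): the only way the lifted word could have a letter equal to $\overline{\gamma}^{\,-1}$ adjacent to $\overline{\gamma}$ is if the corresponding unstarred pair in $w\gamma^{\epsilon}w'$ already violated (P1). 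Adding this one-paragraph justification closes the gap and brings the argument in line with the paper's.
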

\begin{proof}
Let $\widetilde{\mathfrak{T}}$ be a torsion shadow with $\widetilde{\mathfrak{T}}= \mathcal{T}\cap \mathcal{M}$, for some $\mathcal{T} \in \tors(\Pi(A))$. Suppose $w, w' \in B(\widetilde{\mathfrak{T}})$ and let $M(\text{str}(w_\mathcal{D}))$ and $M(\text{str}(w'_{\mathcal{D}'}))$ denote the corresponding indecomposables in $\widetilde{\mathfrak{T}}$.

Assuming that $w\gamma w' \in \Str(A)$ where $\gamma$ is some arrow of $Q$, we show that $w\gamma w' \in B(\widetilde{\mathfrak{T}})$. The proof is very similar when the concatenation is of the form $w\gamma^{-1}w'$, so we omit it. By assumption, if $w\gamma w' \in \Str(A)$, then $\widetilde{u} = \text{str}(w_\mathcal{D})\gamma \text{str}(w'_{\mathcal{D}'}) \in \widetilde{\Str}(A)$. Observe that there is an extension $$0 \to M(\text{str}(w_\mathcal{D})) \to M(\widetilde{u}) \to M(\text{str}(w'_{\mathcal{D}'})) \to 0$$ in $\modu(\Pi(A))$. Since $\mathcal{T}$ is extension-closed, we have that $M(\widetilde{u}) \in \mathcal{T}$. We obtain that $w\gamma w' =u \in B(\widetilde{\mathfrak{T}})$. Therefore, $B(\widetilde{\mathfrak{T}})$ is closed.

Next, we prove that $B(\widetilde{\mathfrak{T}})$ is coclosed. Assume $w \in B(\widetilde{\mathfrak{T}})$ and that $w=v\gamma v'$ for some strings $v$ and $v'$ in $\Str(A)$ and some arrow $\gamma \in Q_1$. The proof is very similar when we assume that $w = v\gamma^{-1}v'$ so we omit it. Let $M(\widetilde{w}) \in \widetilde{\mathfrak{T}}$ denote a string module that specializes to $w$. We know that $\widetilde{w} = \text{str}(v_{\mathcal{D}})\gamma^{\pm 1}\text{str}(v'_{\mathcal{D}'})$ or $\widetilde{w} = \text{str}(v_{\mathcal{D}})({\gamma^*})^{\pm 1}\text{str}(v'_{\mathcal{D}'})$. Without loss of generality, there is a surjection $M(\widetilde{w})\twoheadrightarrow M(\text{str}(v'_{\mathcal{D}'}))$. Since $\widetilde{\mathfrak{T}}$ is a quotient-closed, we have that $M(\text{str}(v'_{\mathcal{D}'})) \in \widetilde{\mathfrak{T}}$. By the definition of $B(\widetilde{\mathfrak{T}})$, we know that $v' \in B(\widetilde{\mathfrak{T}})$. Therefore, $B(\widetilde{\mathfrak{T}})$ is coclosed.\end{proof}


\begin{proposition}\label{Prop_inverses}
We have the following identities:
\begin{enumerate} [(i)]
\item $\widetilde{\mathfrak{T}}=\widetilde{\mathfrak{T}}(B(\widetilde{\mathfrak{T}}))$ for all $\mathfrak{T} \in \torshad(\Pi(A))$;
\item $B=B(\widetilde{\mathfrak{T}}(B))$ for all $B \in \Bic(A)$.
\end{enumerate}
\end{proposition}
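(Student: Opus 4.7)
I would prove part (ii) first and then bootstrap to part (i), using the bijection from Lemma~\ref{Lemma_labels_to_Pi_strings} to pass between indecomposables $M(\str(w_\mathcal{D}))\in\mathcal{M}$ and labels $w_\mathcal{D}\in\mathcal{S}$. A direct inspection of the four decoration cases in the definition of $\str$ shows that $\str(w_\mathcal{D})$ always specializes back to $w$, which I use throughout.

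For part (ii), both containments follow from Lemma~\ref{lemma_desc_T(B)}. Given $u\in B$, the lemma supplies an $M(\widetilde{u})\in\widetilde{\mathfrak{T}}(B)$ with $\widetilde{u}$ specializing to $u$; Lemma~\ref{Lemma_labels_to_Pi_strings} writes $\widetilde{u}=\str(u_{\mathcal{D}_u})$, so $u\in B(\widetilde{\mathfrak{T}}(B))$. Conversely, if $u\in B(\widetilde{\mathfrak{T}}(B))$ and $M(\str(u_\mathcal{D}))\in\widetilde{\mathfrak{T}}(B)$, viewing this module as a quotient of itself in Lemma~\ref{lemma_desc_T(B)} forces the specialization $u$ to lie in $B$.

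For part (i), the inclusion $\widetilde{\mathfrak{T}}\subseteq\widetilde{\mathfrak{T}}(B(\widetilde{\mathfrak{T}}))$ is straightforward: given an indecomposable $M(\widetilde{w})\in\widetilde{\mathfrak{T}}=\mathcal{T}\cap\mathcal{M}$, Proposition~\ref{Ind-Brick Finite} makes $\widetilde{w}$ self-avoiding and hence $M(\widetilde{w})$ thin, so each indecomposable quotient is a string module $M(\widetilde{v})$ for a substring $\widetilde{v}$ of $\widetilde{w}$. Substrings of elements of $\widetilde{\Str}(A)$ remain in $\widetilde{\Str}(A)$, and quotient-closure of $\mathcal{T}$ places $M(\widetilde{v})$ in $\widetilde{\mathfrak{T}}$; thus $v\in B(\widetilde{\mathfrak{T}})$ for every such $v$, and Lemma~\ref{lemma_desc_T(B)} concludes $M(\widetilde{w})\in\widetilde{\mathfrak{T}}(B(\widetilde{\mathfrak{T}}))$.

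The reverse inclusion $\widetilde{\mathfrak{T}}(B(\widetilde{\mathfrak{T}}))\subseteq\widetilde{\mathfrak{T}}$ is the main obstacle. By the definition of $\widetilde{\mathfrak{T}}(B(\widetilde{\mathfrak{T}}))$, it suffices to verify that $M(\str(w_\mathcal{D}))\in\widetilde{\mathfrak{T}}$ for every $w_\mathcal{D}\in\lambda_\downarrow(B(\widetilde{\mathfrak{T}}))$: fixing any $\mathcal{T}$ with $\widetilde{\mathfrak{T}}=\mathcal{T}\cap\mathcal{M}$, this places the generators of the minimal torsion class defining $\widetilde{\mathfrak{T}}(B(\widetilde{\mathfrak{T}}))$ inside $\mathcal{T}$, so the full torsion shadow sits inside $\widetilde{\mathfrak{T}}$. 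Since $w\in B(\widetilde{\mathfrak{T}})$ there exists \emph{some} label $w_{\mathcal{D}'}$ with $M(\str(w_{\mathcal{D}'}))\in\widetilde{\mathfrak{T}}$, and the crux of the proof is to show $\mathcal{D}'=\mathcal{D}$. I plan to do this via a case-by-case inspection of the four decoration cases in the definition of $\str$: at each arrow $\gamma_i$ of $w$, the decoration prescribed by $\mathcal{D}'$ places the adjacent split on the \emph{quotient} side of the cut at $\gamma_i$ in $M(\str(w_{\mathcal{D}'}))$. Quotient-closure under $\mathcal{T}$ then forces each such split's specialization to lie in $B(\widetilde{\mathfrak{T}})$, giving $\mathcal{D}'\subseteq B(\widetilde{\mathfrak{T}})$. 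The covering-relation hypothesis forces $B(\widetilde{\mathfrak{T}})$ to contain exactly one split from each break of $w$, and since $\mathcal{D}$ and $\mathcal{D}'$ each pick one split per break, they coincide. The delicate step is the local matching of each decoration case to its quotient side, which requires carefully tracking the sign $\epsilon_i$ and the star convention through the representation structure of $M(\str(w_{\mathcal{D}'}))$.
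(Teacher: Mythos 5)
Your proposal follows essentially the same route as the paper's proof: part (ii) is a direct two-sided application of Lemma~\ref{lemma_desc_T(B)} together with the bijection of Lemma~\ref{Lemma_labels_to_Pi_strings}, and for part (i) you use quotient-closure plus Lemma~\ref{lemma_desc_T(B)} for the forward containment and then reduce the reverse containment to showing $M(\str(w_\mathcal{D}))\in\widetilde{\mathfrak{T}}$ for each $w_\mathcal{D}\in\lambda_\downarrow(B(\widetilde{\mathfrak{T}}))$, which you finish by arguing that the decoration $\mathcal{D}'$ of the module already present in $\widetilde{\mathfrak{T}}$ must equal $\mathcal{D}$ because $\mathcal{D}'\subseteq B(\widetilde{\mathfrak{T}})$ and $B(\widetilde{\mathfrak{T}})$ contains exactly one split from each break of $w$ --- this is exactly the paper's contradiction argument, with the extra (correct and worthwhile) justification that $\mathcal{D}'\subseteq B(\widetilde{\mathfrak{T}})$ follows from the way $\str$ places each $\mathcal{D}'$-split on the quotient side of the corresponding cut.
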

\begin{proof}
To prove $(i)$, first, assume $M(\widetilde{w}) \in \text{ind}(\widetilde{\mathfrak{T}})$. Since $\widetilde{\mathfrak{T}}$ is quotient-closed, we know that every indecomposable quotient of $M(\widetilde{w})$ belongs to $\widetilde{\mathfrak{T}}$. By the definition of $B(-)$, we see that $\widetilde{u}$ specializes to a string $u \in B(\widetilde{\mathfrak{T}})$ where $M(\widetilde{u})$ is any indecomposable quotient of $M(\widetilde{w})$. By Lemma~\ref{lemma_desc_T(B)}, $M(\widetilde{w}) \in \text{ind}(\widetilde{\mathfrak{T}}(B(\widetilde{\mathfrak{T}})))$.

Next, write $\widetilde{\mathfrak{T}}(B(\widetilde{\mathfrak{T}})) = \mathcal{T}^*\cap \mathcal{M}$ and $\widetilde{\mathfrak{T}} = \mathcal{T}^{**}\cap \mathcal{M}$ where $\mathcal{T}^*$ is the smallest torsion class in $\modu(\Pi(A))$ that contains $\text{gen}(\bigoplus M(\text{str}(w^i_{\mathcal{D}^i}))\ | \ w^i_{\mathcal{D}^i}\in \lambda_\downarrow(B(\widetilde{\mathfrak{T}})))$ and $\mathcal{T}^{**}$ is the smallest torsion class in $\modu(\Pi(A))$ that contains $\widetilde{\mathfrak{T}}$. To prove the opposite containment, it is enough to show that $M(\text{str}(w^i_{\mathcal{D}^i})) \in \widetilde{\mathfrak{T}}$ for all $i$, since this would imply that $\mathcal{T}^* \subseteq \mathcal{T}^{**}.$ 

Now, observe that for any $i$ we have $w^i \in B(\widetilde{\mathfrak{T}})$. This implies that there exists $\mathcal{D}$ such that $M(\text{str}(w^i_\mathcal{D})) \in \widetilde{\mathfrak{T}}.$ Notice that $\mathcal{D}, \mathcal{D}^i \subseteq B(\widetilde{\mathfrak{T}}).$ If $\mathcal{D} \neq \mathcal{D}^i$, then there exists $u \in \mathcal{D}$ and $u^i \in \mathcal{D}^i$ such that $w^i = u\gamma^{\pm 1}u^i$. This equation contradicts that $w^i_{\mathcal{D}^i} \in \lambda_{\downarrow}(B(\widetilde{\mathfrak{T}})).$ This completes the proof of $(i)$.

We now prove $(ii)$. Assume $w \in B$. By Lemma~\ref{lemma_desc_T(B)}, there exists $\widetilde{w} \in \widetilde{\mathfrak{T}}(B)$ that specializes to $w$. By the definition of $B(-)$, we know $w \in B(\widetilde{\mathfrak{T}}(B))$.

To prove the opposite containment, assume $w \in B(\widetilde{\mathfrak{T}}(B))$. By the definition of $B(-)$, there exists $\mathcal{D}$ such that $M(\text{str}(w_\mathcal{D})) \in \widetilde{\mathfrak{T}}(B)$. Now Lemma~\ref{lemma_desc_T(B)} implies that $w \in B$.\end{proof}

\begin{proof}[Proof of Theorem \ref{Bic-Torshad}]
It follows from Proposition~\ref{Prop_inverses} that the maps $B(-)$ and $\widetilde{\mathfrak{T}}(-)$ are bijections. To complete the proof of the first assertion, we must show that these maps are order-preserving. If $B_1, B_2 \in \Bic(A)$ and $B_1\subseteq B_2$, then Lemma~\ref{lemma_desc_T(B)} implies that $\widetilde{\mathfrak{T}}(B_1) \subseteq \widetilde{\mathfrak{T}}(B_2)$. By definition, $B(-)$ is an order-preserving map. 

For the second part, recall that there is a bijection from $\Bic(A)$ to $\bic(A)$ which sends each biclosed set $B$ to the biclosed subcategory $\mathcal{B}:=\add ( \bigoplus M (w) |\, w \in B )$ in $\modu(A)$. Furthermore, for every $\widetilde{\mathfrak{T}} \in \torshad(\Pi(A))$, we previously defined $\mathfrak{T}:= \add ( \bigoplus M(w) \,|\, M(\widetilde{w}) \in \widetilde{\mathfrak{T}} ).$ This gives the desired identity.
\end{proof}

\begin{example}\label{Running example}
Let $A$ denote the following brick gentle algebra from Example~\ref{Ex_2cyc_alg}, and let $\Pi(A)$ denote its associated overalgebra. 

\vspace{-.3in}$$A = k(\xymatrix{
   1 \ar@<1ex>[r]^\alpha & 2 \ar@<1ex>[l]^\beta})/\langle \alpha\beta, \beta\alpha\rangle \ \ \ \Pi(A) =k(\xymatrix{
   1 \ar@<1ex>[r]^\alpha \ar@<-3ex>[r]_{\beta^*}  & 2 \ar@<1ex>[l]^\beta \ar@<-3ex>[l]_{\alpha^*}})/\langle \alpha\beta, \beta\alpha, \beta^*\alpha^*, \alpha^*\beta^*\rangle$$
 
In Figure~\ref{fig_torshad(A2_preproj)}, we show the lattice of torsion shadows of $A$. Here we describe each torsion shadow $\widetilde{\mathfrak{T}}$ simply by showing the strings defining the string modules in $\widetilde{\mathfrak{T}}$.\end{example}

\begin{figure}
$$\includegraphics[scale=1]{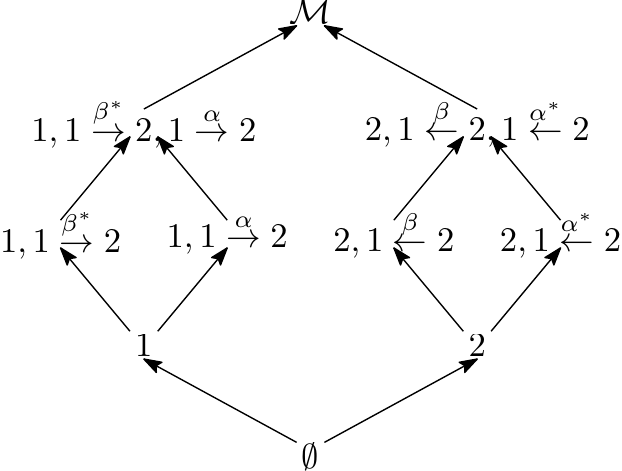}$$
\caption{A lattice of torsion shadows.}
\label{fig_torshad(A2_preproj)}
\end{figure}

%
%

\section{Wide shadows}\label{sec:wide}
Recall that a subcategory $\mathcal{W}$ of $\modu(\Lambda)$ is said to be \emph{wide} if it is abelian and closed under extensions. Let $\wide (\Lambda)$ denote the set of all wide subcategories of $\modu(\Lambda)$, ordered by inclusion. Recall that a subcategory $\mathcal{C}$ of $\modu(A)$ is functorially finite if each module $M$ in $\modu(A)$ admits right and left $\mathcal{C}$-approximations. For details on approximation theory, see \cite{AR}. Let $\fwide(\Lambda)$ (resp., $\ftors(\Lambda)$) be the subposet of $\wide(\Lambda)$ (resp., $\tors(\Lambda)$) consisting of all functorially finite wide subcategories (resp., torsion classes). 

For an acyclic quiver $Q$, Ingalls and Thomas in \cite{IT} establish several bijections between various families of representation theoretic objects associated with $kQ$. Among these is a bijection between $\ftors(kQ)$ and $\fwide(kQ)$. More recently, in \cite{MS}, Marks and \v S\v tov\` i\v cek consider the question of when $\ftors(\Lambda)$ and $\fwide(\Lambda)$ are in bijection for an arbitrary finite dimensional algebra $\Lambda$. In particular, they show that these categories are in bijection if every torsion class of $\Lambda$ is functorially finite. In this case, the bijective maps between $\ftors(\Lambda)$ and $\fwide(\Lambda)$ are the same as those discovered by Ingalls and Thomas. 

If $\Lambda$ is a representation finite algebra, then every torsion class is functorially finite. Therefore, there is a bijection between $\tors(\Lambda)$ and $\wide(\Lambda)$. In this section, given a brick gentle algebra $A$, we consider the question of whether there is a family of subcategories of $\modu(\Pi(A))$ that behave like wide subcategories and that are in bijection with the elements of $\torshad(\Pi(A))$ via maps that are analogous to those of Ingalls and Thomas and of Marks and \v S\v tov\` i\v cek. It turns out that such a family of subcategories exist; we will refer to these subcategories as \textit{wide shadows}.

\begin{definition}
Let $\mathcal{M}$ be an arbitrary full subcategory of $\modu(\Lambda)$. For every $\mathcal{W} \in \wide(\Lambda)$, the $\mathcal{M}$-\emph{wide shadow} (or simply \emph{wide shadow}) of $\mathcal{W}$ is defined as $\mathfrak{W}_{\mathcal{M}}:= \mathcal{W} \cap \mathcal{M}.$ Let $\widshad_{\mathcal{M}}(\Lambda)$ denote the poset of all $\mathcal{M}$-wide shadows ordered by inclusion.
\end{definition}

Observe that the poset $\text{wide}(\Lambda)$ is a closed under arbitrary intersections of wide subcategories. Consequently, given a wide shadow $\mathfrak{W}_\mathcal{M}$, there is a well-defined smallest wide subcategory of $\text{mod}(\Lambda)$ that contains $\mathfrak{W}_\mathcal{M}$. Therefore, when considering a particular wide shadow $\mathfrak{W}_\mathcal{M}$, we will tacitly assume that it is expressed as $\mathfrak{W}_\mathcal{M} = \mathcal{W}\cap \mathcal{M}$ where $\mathcal{W}$ is the smallest wide subcategory of $\text{mod}(\Lambda)$ containing it. 

The following lemma for wide shadows is the counterpart of Lemma~\ref{torshad-epi} for torsion shadows. 


\begin{lemma}\label{widshad-epi}
Let $\phi: B \twoheadrightarrow A$ be an algebra epimorphism and $\mathcal{M}$ a full subcategory of $\modu(B)$ which contains $\modu(A)$. Then 

\begin{enumerate}
\item the poset $\widshad_{\mathcal{M}}(B)$ is a complete lattice, and
\item the maps $(-)\cap \mathcal{M}: \wide(B) \twoheadrightarrow \widshad_{\mathcal{M}}(B)$ and $(-)\cap \modu(A):\widshad_{\mathcal{M}}(B) \twoheadrightarrow \wide(A)$ are meet-semilattice epimorphisms.
\end{enumerate}


\begin{proof}

(1) Given a family of wide shadows $\{\mathfrak{W}_i\}_{i \in I} \subseteq \widshad_{\mathcal{M}}(B)$, there exist wide subcategories
$\{\mathcal{W}_i\}_{i \in I}\in \wide(B)$ such that $\mathfrak{W}_i=\mathcal{W}_i \cap \mathcal{M}$ for all $i \in I$. By defining $\bigwedge_{i\in I} \mathfrak{W}_i :=\bigcap_{i\in I}\mathfrak{W}_i$ and the fact that $\bigcap_{i\in I}\mathcal{W}_{i} \in \wide (B)$, it is clear that $\widshad_{\mathcal{M}}(B)$ is a complete meet-semilattice. Since $\text{mod}(B) \cap \mathcal{M}$ is the unique maximal element of $\text{widshad}_\mathcal{M}(B)$, we obtain that $\text{widshad}_\mathcal{M}(B)$ is a complete lattice.


(2) The surjective map $\wide(B) \twoheadrightarrow \widshad_{\mathcal{M}}(B)$ is obviously a poset epimorphism by definition. Furthermore, if $\mathcal{W}, \mathcal{W}'\in \wide(B)$, the image of $\mathcal{W} \land \mathcal{W}'$ is given by $(\mathcal{W} \cap \mathcal{W}') \cap \mathcal{M}$, which is clearly $\mathfrak{W} \land \mathfrak{W}' \in \widshad_{\mathcal{M}}(B)$, where $\mathfrak{W}= \mathcal{W} \cap \mathcal{M}$ and $\mathfrak{W}'=\mathcal{W}'\cap \mathcal{M}$.

The assertion about $(-)\cap \modu(A):\widshad_{\mathcal{M}}(B) \twoheadrightarrow \wide(A)$, is proved in a similar way.\end{proof}

\end{lemma}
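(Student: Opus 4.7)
The plan is to mirror the argument given for torsion shadows in Lemma~\ref{torshad-epi}, substituting the closure properties of torsion classes (quotients, extensions) with those of wide subcategories (kernels, cokernels, extensions). The crucial input, noted just before the lemma, is that $\wide(\Lambda)$ is closed under arbitrary intersections, so every family of wide subcategories has a well-defined meet. This lets intersections transport cleanly from $\wide(B)$ to $\widshad_{\mathcal{M}}(B)$.

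For part (1), I would define meets in $\widshad_{\mathcal{M}}(B)$ by intersection. Given a family $\{\mathfrak{W}_i\}_{i\in I}\subseteq\widshad_{\mathcal{M}}(B)$ with $\mathfrak{W}_i=\mathcal{W}_i\cap\mathcal{M}$, one has
\[
\bigcap_{i\in I}\mathfrak{W}_i \;=\; \Bigl(\bigcap_{i\in I}\mathcal{W}_i\Bigr)\cap\mathcal{M},
\]
which is again a wide shadow since $\bigcap_i\mathcal{W}_i\in\wide(B)$. Thus $\widshad_{\mathcal{M}}(B)$ is a complete meet-semilattice. Because it has a top element, namely $\modu(B)\cap\mathcal{M}=\mathcal{M}$, the standard fact that a complete meet-semilattice with a maximum is a complete lattice (joins defined as meets of upper bounds) finishes this part.

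For part (2), surjectivity of the first map is immediate from the definition of $\widshad_{\mathcal{M}}(B)$. For the second map, any $\mathcal{V}\in\wide(A)$ can be viewed through $\phi$ as a wide subcategory $\widetilde{\mathcal{V}}\subseteq\modu(B)$, and $\widetilde{\mathcal{V}}\cap\mathcal{M}\in\widshad_{\mathcal{M}}(B)$ maps back to $\mathcal{V}$ under $(-)\cap\modu(A)$, which gives surjectivity. Meet-preservation of the first map follows from
\[
(\mathcal{W}\wedge\mathcal{W}')\cap\mathcal{M}=(\mathcal{W}\cap\mathcal{W}')\cap\mathcal{M}=(\mathcal{W}\cap\mathcal{M})\cap(\mathcal{W}'\cap\mathcal{M}),
\]
and the analogous computation works for the second map, using the hypothesis $\modu(A)\subseteq\mathcal{M}$ to rewrite $(\mathcal{W}\cap\mathcal{M})\cap\modu(A)=\mathcal{W}\cap\modu(A)$.

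The main (mild) obstacle I expect is verifying that the image of a wide shadow under $(-)\cap\modu(A)$ really is a wide subcategory of $\modu(A)$, not merely an abelian subcategory of $\modu(B)$ that happens to consist of $A$-modules. This reduces to observing that $\modu(A)$, regarded as a full subcategory of $\modu(B)$ via the algebra epimorphism $\phi:B\twoheadrightarrow A$, is itself wide: kernels, cokernels, and extensions of $A$-modules computed in $\modu(B)$ remain annihilated by $\ker\phi$, and so lie in $\modu(A)$. Granting this, $\mathcal{W}\cap\modu(A)$ inherits abelianness and extension-closure in $\modu(A)$, and the bookkeeping above goes through verbatim.
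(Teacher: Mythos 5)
Your argument mirrors the paper's structure quite closely: part (1) via arbitrary meets being intersections together with the top element $\mathcal{M}$, and part (2) via the observation that intersecting with $\mathcal{M}$ or $\modu(A)$ commutes with pairwise intersection. The flaw lies in the supporting claim you introduce in the final paragraph.

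The assertion that $\modu(A)$, viewed via $\phi$ as a full subcategory of $\modu(B)$, is \emph{wide} in $\modu(B)$ is false. It is indeed closed under kernels, cokernels, and more generally under all subquotients, but it is not closed under extensions: an extension in $\modu(B)$ of two modules annihilated by $\ker\phi$ need not itself be annihilated by $\ker\phi$. For a minimal example, take $B=k[x]/(x^2)$, $A=k$, and the extension $0\to k\to B\to k\to 0$ of the simple by itself, which produces $B$, on which $x$ acts nontrivially. The same phenomenon occurs for the epimorphism $\Pi(A)\twoheadrightarrow A$ in this paper: the dual arrows $\gamma^*$ give nontrivial extensions in $\modu(\Pi(A))$ between simple $A$-modules, and those extensions leave $\modu(A)$.

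This error is not cosmetic. Your argument for surjectivity of $(-)\cap\modu(A)$ --- viewing $\mathcal{V}\in\wide(A)$ directly as a wide subcategory $\widetilde{\mathcal{V}}\subseteq\modu(B)$ --- collapses once $\modu(A)$ (hence $\widetilde{\mathcal{V}}$) is not extension-closed in $\modu(B)$. Likewise, you deduce that $\mathcal{W}\cap\modu(A)$ lies in $\wide(A)$ by treating it as an intersection of two wide subcategories of $\modu(B)$, which uses the same false premise. That last conclusion does hold, but by a more elementary route that you should substitute: the inclusion $\modu(A)\hookrightarrow\modu(B)$ is exact and reflects exactness, $\modu(A)$ is closed under subquotients in $\modu(B)$, and every short exact sequence in $\modu(A)$ is also a short exact sequence in $\modu(B)$; from these facts $\mathcal{W}\cap\modu(A)$ directly inherits closure under kernels, cokernels, and extensions \emph{within} $\modu(A)$. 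For surjectivity of the second map, you need a genuinely different construction (for instance, take the smallest wide subcategory of $\modu(B)$ containing $\mathcal{V}$ and verify that intersecting back with $\modu(A)$ returns $\mathcal{V}$), and that step still requires justification. The paper is admittedly terse here (``proved in a similar way''), but the gap you tried to fill was filled with a false statement, so the surjectivity argument as written does not stand.
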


We remark that the maps $(-)\cap \mathcal{M}: \wide(B) \twoheadrightarrow \widshad_{\mathcal{M}}(B)$ and $(-)\cap \modu(A):\widshad_{\mathcal{M}}(B) \twoheadrightarrow \wide(A)$ usually fail to be join-semilattice maps.

For the remainder of this section, we let $A=kQ/I$ be a brick gentle algebra, and we let $\mathcal{M}$ be the subcategory of $\modu(\Pi(A))$ defined in Section~\ref{torshad_section}.  Hence, for $\mathcal{W} \in \wide(\Pi(A))$, the associated wide shadow is denoted by $\widetilde{\mathfrak{W}}:= \mathcal{W} \cap \mathcal{M}$ and $\widshad(\Pi(A))$ is the collection of all such subcategories of $\modu(\Pi(A))$ ordered by inclusion.

Before we state the main theorem of this subsection, let us summarize what we have obtained so far in the following diagram, as the main motivation for what follows. 

\begin{center}
\begin{tikzpicture}[scale=1]
\node at (-4,0) {$\tors(\Pi(A))$};
\draw [->>] (-3,0) --(-1.5,0);
\node at (-2.25,0.3) {\tiny $(-)\cap \mathcal{M}$};
---
\node at (-0.25,0) {$\torshad(\Pi(A))$};
\node at (-0.5,0.65) {$\bic(A)$};
\node at (-0.5,0.33) {$\simeqd$};
\draw [->>] (1,0) --(2.5,0);
\node at (1.75,0.3) {\tiny $(-)\cap \modu(A)$};
---
\node at (3.25,0) {$\tors(A)$};
-------
-------
\node at (-4,-2) {$\wide(\Pi(A))$};
\draw [->>] (-3,-2) --(-1.5,-2);
\node at (-2.25,-2.3) {\tiny $(-)\cap \mathcal{M}$};
---
\node at (-0.25,-2) {$\widshad(\Pi(A))$};
\draw [->>] (1,-2) --(2.5,-2);
\node at (1.75,-2.3) {\tiny $(-)\cap \modu(A)$};;
---
\node at (3.25,-2) {$\wide(A)$};
-------
-
---
\draw [dashed, ->] (-0.5,-1.75) --(-0.5,-0.25) ;
\node at (-0.75,-1) {$?$};
-
\draw [dashed, <-] (0,-1.75) --(0,-0.25) ;
\node at (0.25,-1) {$?$};
---
\draw [->] (3,-1.75) --(3,-0.25) ;
\node at (2.7,-1) {\tiny $\mathcal{T}_{(-)}$};
-
\draw [<-] (3.25,-1.75) --(3.25,-0.25) ;
\node at (3.6,-1) {\tiny$\mathcal{W}_{(-)}$};

\end{tikzpicture}
\end{center}

The rightmost vertical maps are the bijections established in \cite{MS}. Furthermore, the horizontal maps are the surjective poset maps described in Lemmas \ref{torshad-epi} and \ref{widshad-epi}. Finally, in Theorem \ref{Bic-Torshad} we proved the isomorphism between $\bic(A)$ and $\torshad(\Pi(A))$. 

We have the following theorem which says that torsion shadows and wide shadows are in bijection. We will prove this theorem by showing that wide shadows are closely linked to the lattice theory of torsion shadows. More specifically, we will show that $\widshad(\Pi(A))$ is isomorphic to the shard intersection order of $\Bic(A)$ in Section~\ref{sec:shard}.

\begin{theorem}\label{Thm_torshad_widshad_bij}
There is a bijection between $\torshad(\Pi(A))$ and $\widshad(\Pi(A))$.
\end{theorem}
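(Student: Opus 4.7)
My plan is to establish Theorem~\ref{Thm_torshad_widshad_bij} by leveraging the lattice-theoretic machinery developed for $\Bic(A)$, following the two-pronged strategy the introduction promises. By Theorem~\ref{Bic-Torshad} we already have $\torshad(\Pi(A)) \cong \Bic(A)$, so the target is a bijection between $\widshad(\Pi(A))$ and the (now combinatorial) lattice $\Bic(A)$. Since $\Bic(A)$ is a finite congruence-uniform lattice by Theorem~\ref{thm_bic_is_cu}, and since by Lemmas~\ref{cu-label_ji_mi} and \ref{Lemma_cjr_cmr} the assignment $x \mapsto \psi(x)$ is a bijection between $\Bic(A)$ and its shard intersection order $\Psi(\Bic(A))$, it suffices to produce a bijection $\widshad(\Pi(A)) \cong \Psi(\Bic(A))$.

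First approach (shard intersection order). The key input is the CU-labeling $\lambda$ of $\Bic(A)$ together with the bijection $\str(-) : \mathcal{S} \to \widetilde\Str(A)$ of Lemma~\ref{Lemma_labels_to_Pi_strings}. Each label thus corresponds to a string module $M(\str(w_\mathcal{D})) \in \mathcal{M}$, which is a brick in $\modu(\Pi(A))$ by Lemma~\ref{bricks_in_Pi}. I would construct the map $\Psi(\Bic(A)) \to \widshad(\Pi(A))$ sending a shard intersection $\psi(x) \subseteq \mathcal{S}$ to the smallest extension-closed, kernel-closed, cokernel-closed subcategory of $\mathcal{M}$ containing the bricks $\{M(\str(w_\mathcal{D})) : w_\mathcal{D} \in \psi(x)\}$, which is then shown to be the restriction to $\mathcal{M}$ of a wide subcategory of $\Pi(A)$. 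The inverse sends a wide shadow $\widetilde{\mathfrak{W}}$ to the set of labels of its simple objects (that is, its bricks, translated through $\str^{-1}$). The content of Sections~\ref{sec:canonical} and~\ref{sec:shard} — in particular, a combinatorial description of shards as concrete collections of strings, analogous to Proposition~\ref{Prop_join_irr_des} for join-irreducibles — is exactly what is required to verify that these two maps land where claimed and are mutually inverse.

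Second approach (Marks--\v{S}tov\' i\v{c}ek-style maps). Because $A$ is a brick gentle algebra, Proposition~\ref{Ind-Brick Finite} rules out bands; the same holds for $\Pi(A)$ (any cyclic walk in $\overline Q$ acquires at least two relations from the doubled ideal $\overline I$), so $\Pi(A)$ is representation finite and every torsion class and wide subcategory is functorially finite. Hence the bijections of \cite{MS} apply: $\alpha : \tors(\Pi(A)) \to \wide(\Pi(A))$ and $\beta = \Filt \circ \Gen : \wide(\Pi(A)) \to \tors(\Pi(A))$ are mutually inverse. The plan is to descend these to the shadow level by
\[ \widetilde\alpha(\mathcal{T} \cap \mathcal{M}) := \alpha(\mathcal{T}) \cap \mathcal{M}, \qquad \widetilde\beta(\mathcal{W} \cap \mathcal{M}) := \Filt(\Gen(\mathcal{W})) \cap \mathcal{M}, \]
taking as canonical lifts the minimal torsion class, respectively minimal wide subcategory, containing the given shadow (such minimal lifts exist by Lemmas~\ref{torshad-epi} and~\ref{widshad-epi}, which show the relevant closure operations behave well). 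The verification that $\widetilde\alpha$ and $\widetilde\beta$ are mutually inverse reduces, via the explicit string-combinatorial description of $\mathcal{M}$-modules, to the fact that the indecomposables in a torsion shadow are determined by their indecomposable submodules lying in $\mathcal{M}$, and dually, together with the known identity $\beta\alpha = \mathrm{id}$ on $\tors(\Pi(A))$.

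The main obstacle in either approach is the same: the subcategory $\mathcal{M}$ is not closed under the operations (kernels, cokernels, generation, extensions) used in defining wide and torsion classes, so one cannot simply intersect the Ingalls--Thomas correspondence with $\mathcal{M}$ and be done. One must argue that the relevant \emph{brick content} of each wide subcategory of $\Pi(A)$ lives in $\mathcal{M}$ — equivalently, that the labels $\lambda(j_*,j)$ for $j \in \JI(\Bic(A))$ exhaust the simple objects of wide subcategories restricted to $\mathcal{M}$. This is the bridge that links Approach 1 (where such labels literally index the shards) with Approach 2 (where they are the simples of the associated wide subcategory), and once this is in hand both constructions yield the same bijection, completing the proof.
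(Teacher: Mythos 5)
Your first approach is exactly the paper's proof: it composes the isomorphism $\Bic(A) \cong \torshad(\Pi(A))$ of Theorem~\ref{Bic-Torshad}, the standard bijection $B \mapsto \psi(B)$ between a finite congruence-uniform lattice and its shard intersection order, and the isomorphism $\vartheta : \Psi(\Bic(A)) \to \widshad(\Pi(A))$ established in Theorem~\ref{thm_psi_shad_isom}; as you note, the substantive verification lives in Sections~\ref{sec:canonical} and~\ref{sec:shard}.

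The justification you offer for your second approach is incorrect. You assert that every cyclic walk in $\overline{Q}$ acquires at least two relations from $\overline{I}$, so that $\Pi(A)$ is representation finite and the bijections of \cite{MS} apply directly to $\modu(\Pi(A))$. This fails: $\overline{I}$ contains only products of the form $\beta\alpha$ with both arrows in $Q_1$, or $\alpha^*\beta^*$ with both in $Q_1^*$, and never a mixed product such as $\alpha^*\alpha$. In the paper's running example (Example~\ref{Running example}), the cyclic string $\alpha^*\alpha$ in $\overline{Q}$ contains no relations and is in fact a band, so $\Pi(A)$ is representation infinite. Consequently one cannot simply apply \cite{MS} on $\modu(\Pi(A))$ and then intersect with $\mathcal{M}$; this is precisely the obstacle you flag at the end. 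The paper handles it not by lifting to arbitrary torsion or wide subcategories of $\Pi(A)$ but by working directly with the finitely many bricks $M(\str(w_\mathcal{D}))$ indexed by the labels $w_\mathcal{D} \in \mathcal{S}$, and the shadow-level analogues of the \cite{MS} formulas (with the intersection by $\mathcal{M}$ built in) appear only in the paper's closing remark after the bijection is already established via the shard intersection order.
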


We conclude this section with an example of the lattice of wide shadows associated with a brick gentle algebra.

\begin{example}
Assume that $A$ and $\Pi(A)$ are the algebras from Example~\ref{Running example}. In Figure~\ref{widshad_fig_1}, we show the lattice of wide shadows of $A$. Here we describe each wide shadow $\widetilde{\mathfrak{W}}$ by showing the strings defining the string modules in $\widetilde{\mathfrak{W}}$.
\end{example}

\begin{figure}
$$\includegraphics[scale=1]{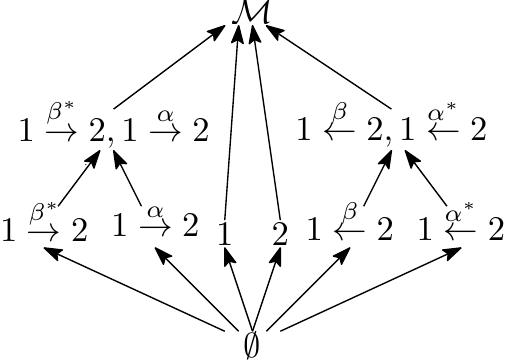}$$
\caption{A lattice of wide shadows.}
\label{widshad_fig_1}
\end{figure}

\section{Canonical join complex for $\Bic(A)$}\label{sec:canonical}

Our next goal is to completely describe the canonical join complex of the lattice of biclosed sets $\Bic(A)$ where $A$ is brick gentle algebra. Our classification of the faces of the canonical join complex will help us to relate the lattice of wide shadows of $A$ to the shard intersection order of $\Bic(A)$.

\begin{theorem}\label{Thm_CJC}
A collection $\{J(w^1_{\mathcal{D}^1}), \ldots, J(w^k_{\mathcal{D}^k})\} \subset \JI(\Bic(A))$ is a face of the canonical join complex $\Delta^{CJ}(\Bic(A))$ if and only if labels $w^i_{\mathcal{D}^i}$ and $w^j_{\mathcal{D}^j}$ satisfy the following:
\begin{itemize}
\item[1)] strings $w^i$ and $w^j$ are distinct,
\item[2)] neither $w^i$ nor $w^j$ is expressible as a concatenation of at least two strings in $J(w^i_{\mathcal{D}^i})\cup J(w^j_{\mathcal{D}^j})$, and
\item[3)] neither $J(w^i_{\mathcal{D}^i}) \le J(w^{j}_{\mathcal{D}^j})$ nor $J(w^{j}_{\mathcal{D}^j}) \le J(w^i_{\mathcal{D}^i})$
\end{itemize}
for any distinct $i,j \in \{1,\ldots, k\}$.
\end{theorem}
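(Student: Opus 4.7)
The plan is to first invoke the flag property of the canonical join complex (Lemma~\ref{lem:flag}) to reduce to the case $k=2$, so it suffices to characterize when a pair $\{J(w^1_{\mathcal{D}^1}), J(w^2_{\mathcal{D}^2})\}$ is a face of $\Delta^{CJ}(\Bic(A))$. Combining Proposition~\ref{Prop_join_irr_des} with Lemma~\ref{Lemma_cjr_cmr}, this pair is the canonical join representation of $B := J(w^1_{\mathcal{D}^1}) \vee J(w^2_{\mathcal{D}^2})$ exactly when $\lambda_\downarrow(B) = \{w^1_{\mathcal{D}^1}, w^2_{\mathcal{D}^2}\}$. Since $B = \overline{J(w^1_{\mathcal{D}^1}) \cup J(w^2_{\mathcal{D}^2})}$ (from the proof of Theorem~\ref{thm_bic_is_cu}) and, by Lemma~\ref{Lemma_covers_in_bic}, every lower cover of $B$ has the form $B \setminus \{v\}$ with label $v_{\mathcal{E}}$ where $\mathcal{E}$ is the set of splits of $v$ lying in $B\setminus\{v\}$, the problem reduces to identifying the ``removable'' strings of $B$ together with their split data.

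For necessity I would argue contrapositively. If $w^i = w^j$ but $\mathcal{D}^i \neq \mathcal{D}^j$, the unique lower cover removing this common string has label $w^i_{\mathcal{E}}$ with $\mathcal{E} \supseteq \mathcal{D}^i \cup \mathcal{D}^j$, matching neither $\mathcal{D}^i$ nor $\mathcal{D}^j$. If $J(w^i_{\mathcal{D}^i}) \leq J(w^j_{\mathcal{D}^j})$, then $B = J(w^j_{\mathcal{D}^j})$ is join-irreducible, forcing a singleton canonical join representation. Finally, if $w^i$ decomposes as $u_1 \alpha_1^{\pm 1} u_2 \cdots \alpha_{m-1}^{\pm 1} u_m$ with $m \geq 2$ and $u_\ell \in J(w^1_{\mathcal{D}^1}) \cup J(w^2_{\mathcal{D}^2})$, each $u_\ell$ is a proper substring of $w^i$ and so lies in $B \setminus \{w^i\}$, showing that $B \setminus \{w^i\}$ is not closed and ruling out $w^i_{\mathcal{D}^i} \in \lambda_\downarrow(B)$.

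For sufficiency, assume 1), 2), 3). I would first show that $B \setminus \{w^i\}$ is biclosed, with label precisely $w^i_{\mathcal{D}^i}$. Closedness is the contrapositive of condition 2: any expression $u \gamma^{\pm 1} v = w^i$ with $u,v \in B\setminus\{w^i\}$ would, upon writing $u$ and $v$ as concatenations of elements of $J(w^1_{\mathcal{D}^1}) \cup J(w^2_{\mathcal{D}^2})$, yield a decomposition of $w^i$ into at least two such elements. Coclosedness follows from biclosedness of $B$ once we rule out breaks $\{u_1,u_2\}$ of a string $u \in B \setminus \{w^i\}$ with $u_1 = u_2 = w^i$; this is excluded by self-avoidance of strings over brick gentle algebras (Proposition~\ref{Ind-Brick Finite}). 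Identifying the split set with $\mathcal{D}^i$ uses Lemma~\ref{lemma_J_biclosed} together with condition 1, which prevents additional splits of $w^i$ from entering $B$ via $J(w^j_{\mathcal{D}^j})$.

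The main obstacle will be showing $\lambda_\downarrow(B) \subseteq \{w^1_{\mathcal{D}^1}, w^2_{\mathcal{D}^2}\}$, i.e.\ that $B$ has no other lower covers. This requires proving that every $v \in B \setminus \{w^1, w^2\}$ is forced into $B$ by closure, so $B \setminus \{v\}$ fails to be closed. I would argue by induction on the minimum number of elements of $J(w^1_{\mathcal{D}^1}) \cup J(w^2_{\mathcal{D}^2})$ needed to express $v$ as a concatenation: elements of the union other than $w^1$ and $w^2$ themselves are splits of $w^1$ or $w^2$ or lie in $\bigcup_{u} S(u)$, and by revisiting the argument in Lemma~\ref{lemma_J_biclosed} together with conditions 1 and 3 one verifies that each such $v$ admits a nontrivial decomposition inside $B$; the inductive step then handles longer concatenations by splitting off one union element.
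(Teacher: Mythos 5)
Your high-level framing — reduce to $k=2$ via the flag property, then characterize the canonical joinands of $B := J(w^1_{\mathcal{D}^1})\vee J(w^2_{\mathcal{D}^2})$ through $\lambda_\downarrow(B)$ using Proposition~\ref{Prop_join_irr_des} and Lemma~\ref{Lemma_cjr_cmr} — is sound and is also how the paper sets things up. For the sufficiency direction, however, you take a genuinely different route than the paper: you propose to compute $\lambda_\downarrow(B)$ explicitly (show $B\setminus\{w^1\}$ and $B\setminus\{w^2\}$ are the only lower covers), whereas the paper's Lemma~\ref{Lemma_is_canonical} argues directly with an arbitrary alternative irredundant join representation $\bigvee_i J(u^i_{\mathcal{D}^i})$ and establishes the refinement relation $J(w^i_{\mathcal{D}^i})\le J(u^j_{\mathcal{D}^j})$. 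Both routes can in principle work, but yours leaves several nontrivial verifications as sketches, and in two places the argument you give is actually incorrect.

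First, in the $w^i = w^j$ case you assert that ``the unique lower cover removing this common string has label $w^i_{\mathcal{E}}$ with $\mathcal{E}\supseteq\mathcal{D}^i\cup\mathcal{D}^j$.'' In fact no such lower cover exists: since $\mathcal{D}^i\neq\mathcal{D}^j$, there is a break of $w^i$ both of whose splits lie in $B$, so $B\setminus\{w^i\}$ is not closed. The conclusion ($w^i_{\mathcal{D}^i}\notin\lambda_\downarrow(B)$) still holds, but for a different reason, and the cleanest justification is the paper's Lemma~\ref{nosplit}, which shows two distinct lower covers of any biclosed set must be labeled by distinct strings. Second, your coclosedness argument for $B\setminus\{w^i\}$ only rules out the case $u_1=u_2=w^i$, but that is a non-issue (self-avoidance). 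The real obstruction you must exclude is $u\in B\setminus\{w^i\}$ with a break $\{u_1,u_2\}$ where $u_1=w^i$ and $u_2\notin B$; showing $u_2\in B$ whenever $w^i\alpha^{\pm 1}u_2\in B$ requires using condition~2 and a careful case analysis, not self-avoidance. Third, the final step — showing $B\setminus\{v\}$ is not biclosed for $v\neq w^1,w^2$ — is where most of the work lies, and your sketch misstates what happens: when $v\in\mathcal{D}^i$ say, removing $v$ typically breaks coclosedness, not closedness, and your description of the elements of $J(w^i_{\mathcal{D}^i})$ as ``splits or in $\bigcup_u S(u)$'' misses the concatenated elements of the closure. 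Moreover the claim that each such $v$ ``admits a nontrivial decomposition inside $B$'' is not always the right thing to show (again it may be coclosedness that fails), and verifying it needs the same kind of careful interaction between conditions 1, 2, 3 that the paper's Lemma~\ref{Lemma_is_canonical} carries out; that interaction is the substance of the proof, and your sketch does not supply it.
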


Before presenting the proof of Theorem~\ref{Thm_CJC}, we mention the following corollary that we will use when we discuss the shard intersection order of $\Bic(A)$.

\begin{corollary}\label{Cor_bic_hom_orthog}
Let $B \in \Bic(A)$, and let $\{J(w^1_{\mathcal{D}^1}), \ldots, J(w^k_{\mathcal{D}^k})\}$ be the canonical joinands in its canonical join representation. Then $$\Hom_{\Pi(A)}(M(\text{str}(w^i_{\mathcal{D}^i})), M(\text{str}(w^j_{\mathcal{D}^j}))) = 0$$ for any $i \neq j$. \end{corollary}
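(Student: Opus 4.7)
The plan is to argue by contradiction. Suppose some $\varphi \in \Hom_{\Pi(A)}(M(\str(w^i_{\mathcal{D}^i})), M(\str(w^j_{\mathcal{D}^j})))$ is nonzero; the goal is to violate one of the three properties that the canonical joinands of $B$ satisfy by Theorem~\ref{Thm_CJC}. First I would invoke the standard classification of morphisms between string modules over gentle algebras: every indecomposable morphism is a graph map, so $\varphi$ factors as
\[
M(\str(w^i_{\mathcal{D}^i})) \twoheadrightarrow M(\tilde v) \hookrightarrow M(\str(w^j_{\mathcal{D}^j}))
\]
for some common substring $\tilde v \in \widetilde{\Str}(A)$. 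Writing $v \in \Str(A)$ for the specialization of $\tilde v$, I note that $v$ is a common substring of $w^i$ and $w^j$.

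Next I would translate the graph-map data into combinatorial conditions on $\mathcal{D}^i$ and $\mathcal{D}^j$ using the explicit construction of $\str(-)$. For $M(\tilde v)$ to be a quotient of $M(\str(w^i_{\mathcal{D}^i}))$, the two arrows of $\str(w^i_{\mathcal{D}^i})$ flanking $\tilde v$ must point away from $\tilde v$ in $\overline{Q}$; unraveling the four cases in the definition of $\str$, this forces each cut position to be a break of $w^i$ where the split chosen in $\mathcal{D}^i$ is precisely the substring of $w^i$ that contains $v$ (either as a prefix or as a suffix in walk order). Dually, for $M(\tilde v)$ to be a submodule of $M(\str(w^j_{\mathcal{D}^j}))$, the corresponding flanking splits in $\mathcal{D}^j$ must be the substrings of $w^j$ that do \emph{not} contain $v$, i.e., the outer pieces of $w^j$ around $v$. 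I would then divide into cases according to whether $\tilde v = \str(w^i_{\mathcal{D}^i})$, $\tilde v = \str(w^j_{\mathcal{D}^j})$, or $\tilde v$ is a proper substring of both.

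In the cleanest case $v = w^i$, the $w^j$-side analysis produces a decomposition $w^j = b\, \gamma_L^{\pm 1}\, w^i\, \gamma_R^{\pm 1}\, a$ with $b, a \in \mathcal{D}^j \subseteq J(w^j_{\mathcal{D}^j})$ (either piece possibly empty). If both $a$ and $b$ are empty, then $w^i = w^j$, violating Theorem~\ref{Thm_CJC}(1); otherwise $w^j$ is visibly a concatenation of at least two strings from $J(w^i_{\mathcal{D}^i}) \cup J(w^j_{\mathcal{D}^j})$, violating Theorem~\ref{Thm_CJC}(2). The remaining cases are to be handled by parallel analyses producing analogous contradicting concatenations of $w^i$ or of $w^j$. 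The main obstacle I anticipate is the last case, where $\tilde v$ is a proper common substring of both $\str(w^i_{\mathcal{D}^i})$ and $\str(w^j_{\mathcal{D}^j})$: here the flanking pieces in the decomposition of $w^i$ opposite to the chosen splits in $\mathcal{D}^i$ do not a priori lie in $J(w^i_{\mathcal{D}^i}) \cup J(w^j_{\mathcal{D}^j})$, and it will take care to verify, using the definition of $J(w_{\mathcal{D}})$ (in particular the auxiliary sets $S(u)$ and the closure under concatenation) together with the compatibility of arrow-types in $\Pi(A)$ on the $v$-region of both $\str(w^i_{\mathcal{D}^i})$ and $\str(w^j_{\mathcal{D}^j})$, that the resulting pieces really do lie in the required union.
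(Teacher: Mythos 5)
Your overall strategy matches the paper's: factor a nonzero morphism through a string module, then show that the resulting combinatorics produces a decomposition of $w^j$ that violates Theorem~\ref{Thm_CJC}. However, there is a genuine gap exactly where you flag one, and the obstacle you anticipate in the last case is a sign that you have set up the wrong decomposition.

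The problem is that you try to treat the two sides symmetrically, decomposing both $w^i$ and $w^j$ around $v$, and then you get stuck because the flanking pieces of $w^i$ around $v$ need not lie in $J(w^i_{\mathcal{D}^i}) \cup J(w^j_{\mathcal{D}^j})$. But you never need those pieces. The argument is asymmetric: you decompose only $w^j$. The quotient side is used solely to place the \emph{middle} piece $v$ inside $J(w^i_{\mathcal{D}^i})$, not to decompose $w^i$. Concretely, the fact that $M(\tilde v)$ is an indecomposable quotient of $M(\str(w^i_{\mathcal{D}^i}))$ forces (by the design of $\str(-)$ and the definition of $J(-)$) that $v$ lies in $\{u\}\sqcup S(u,\mathcal{D}^i)$ for some $u\in\mathcal{D}^i$, hence $v\in J(w^i_{\mathcal{D}^i})$. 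The submodule side forces, exactly as you say, that the outer pieces $a,b$ of $w^j$ around $v$ lie in $\mathcal{D}^j\subseteq J(w^j_{\mathcal{D}^j})$ (equivalently, that there is \emph{no} $u\in\mathcal{D}^j$ with $v\in\{u\}\sqcup S(u,\mathcal{D}^j)$). Putting these together gives $w^j = b\,\alpha^{\pm 1}\,v\,\beta^{\pm 1}\,a$ with all three constituents in $J(w^i_{\mathcal{D}^i})\cup J(w^j_{\mathcal{D}^j})$; if $a,b$ are both empty you get $w^i=w^j$ against condition (1), otherwise you contradict condition (2). The ``cleanest case'' $v=w^i$ you handled is just the special instance where $v=w^i\in J(w^i_{\mathcal{D}^i})$ is obvious; the general case requires this one extra observation, and once you have it there is no case split based on whether $v$ is proper in $w^i$.

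One smaller point: the reduction ``every indecomposable morphism is a graph map'' is not quite right as stated (graph maps are a basis of the Hom space, not a description of all morphisms). The cleaner move, which the paper uses, is to observe that $\mathrm{im}(f)$, being simultaneously a quotient of one string module and a submodule of the other, decomposes as a direct sum of string modules over pairwise vertex-disjoint substrings, and then pick a summand $M(\str(w_\mathcal{D}))$; this avoids the graph-map vocabulary entirely and lands you exactly where your $M(\tilde v)$ was meant to be.
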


\begin{proof} By Theorem~\ref{Thm_CJC}, we know that $M(\text{str}(w^i_{\mathcal{D}^i}))\not \cong M(\text{str}(w^j_{\mathcal{D}^j}))$ for any $i \neq j$.  

Let $f \in \Hom_{\Pi(A)}(M(\text{str}(w^i_{\mathcal{D}^i})), M(\text{str}(w^j_{\mathcal{D}^j})))$. Since $\text{im}(f)$ is a quotient of $M(\text{str}(w^i_{\mathcal{D}^i}))$, it is isomorphic to a (possibly empty) direct sum of string modules defined by substrings of $\text{str}(w^i_{\mathcal{D}^i})$ no two of which contain a common vertex. Similarly, since $\text{im}(f)$ is a submodule of $M(\text{str}(w^j_{\mathcal{D}^j}))$, the summands of $\text{im}(f)$ must be string modules defined by substrings of $\text{str}(w^j_{\mathcal{D}^j})$ no two of which contain a common vertex. 

Now, let $M(\text{str}(w_\mathcal{D}))$ be a summand of $\text{im}(f)$. Since $M(\text{str}(w_\mathcal{D}))$ is a submodule of $M(\text{str}(w^j_{\mathcal{D}^j}))$, we know that there does not exist any $u \in \mathcal{D}^j$ such that $w \in \{u\} \sqcup S(u, \mathcal{D}^j)$. Similarly, since $M(\text{str}(w_\mathcal{D}))$ is a quotient of $M(\text{str}(w^i_{\mathcal{D}^i}))$, there exists $v \in \mathcal{D}^i$ such that $w \in \{v\} \sqcup S(v, \mathcal{D}^i)$. If $w$ is not a split of $w^j$, there exists $u^\prime, u^{\prime\prime} \in \mathcal{D}^j$ and $\alpha, \beta \in {Q}_1$ such that $w^j = u^{\prime}\alpha^{\pm 1} w \beta^{\pm 1} u^{\prime\prime}$. We obtain a similar equation if $w$ is a split of $w^j$. In each case, by Theorem~\ref{Thm_CJC}, this contradicts that $J(w^i_{\mathcal{D}^i})$ and $J(w^j_{\mathcal{D}^j})$ are canonical joinands of the canonical join representation of $B$.\end{proof}


\begin{example}
Assume that $A$ is the algebra from Example~\ref{Running example}. In Figure~\ref{cjc_figure}, we show the canonical join complex $\Delta^{CJ}(\Bic(A)).$
\end{example}

\begin{figure}
$$\includegraphics[scale=1]{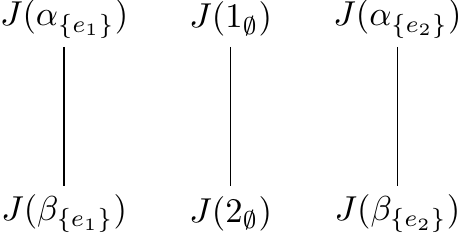}$$
\caption{A canonical join complex. The vertices of this complex are elements of $\JI(\Bic(A))$ and a collection of join-irreducibles are incident in this complex if and only if the join of all of those join-irreducibles is a canonical join representation of some biclosed set.}
\label{cjc_figure}
\end{figure}

\begin{proof}[Proof of Theorem~\ref{Thm_CJC}]
Let $\{J(w^1_{\mathcal{D}^1}), \ldots, J(w^k_{\mathcal{D}^k})\} \subset \text{JI}(\text{Bic}(A))$ where there exists distinct $i, j \in \{1,\ldots, k\}$ such that $w^i_{\mathcal{D}^i}$ and  $w^j_{\mathcal{D}^j}$ do not satisfy all of the stated properties. To prove that $\{J(w^1_{\mathcal{D}^1}), \ldots, J(w^k_{\mathcal{D}^k})\}$ is not a face of $\Delta^{CJ}(\Bic(A))$, it is enough to show that $J(w^i_{\mathcal{D}^i})\vee J(w^j_{\mathcal{D}^j})$ is not a canonical join representation; cf. Lemma~\ref{lem:flag}.

If $w^i = w^j$, then by Lemma~\ref{nosplit} there does not exist $B \in \text{Bic}(A)$ such that $w^i_{\mathcal{D}}, w^j_{\mathcal{D}^\prime} \in {\lambda}_\downarrow(B)$ for any subsets $\mathcal{D}, \mathcal{D}^\prime \subset \Str(A)$. Now by Lemma~\ref{Lemma_cjr_cmr}, we have that $J(w^i_{\mathcal{D}^i})\vee J(w^j_{\mathcal{D}^j})$ is not a canonical join representation.

Next, suppose that $w^i$ or $w^j$ may be expressed as a concatenation of at least two strings in $J(w^{i}_{\mathcal{D}^i}) \cup J(w^j_{\mathcal{D}^j})$. Then Lemma~\ref{lemma_comp_not_canonical} implies that $J(w^i_{\mathcal{D}^i})\vee J(w^j_{\mathcal{D}^j})$ is not a canonical join representation.

Lastly, suppose that, without loss of generality, $J(w^i_{\mathcal{D}^i}) \le J(w^{j}_{\mathcal{D}^j})$. This implies that $J(w^i_{\mathcal{D}^i}) \vee J(w^{j}_{\mathcal{D}^j}) = J(w^{j}_{\mathcal{D}^j})$ and so the expression $J(w^i_{\mathcal{D}^i}) \vee J(w^{j}_{\mathcal{D}^j})$ is not an irredundant join representation.

Conversely, suppose $\{J(w^1_{\mathcal{D}^1}), \ldots, J(w^k_{\mathcal{D}^k})\} \subset \text{JI}(\text{Bic}(A))$ and that any pair of distinct labels $w^i_{\mathcal{D}^i}$ and $w^j_{\mathcal{D}^j}$  satisfy all of the stated properties. Then Lemma~\ref{Lemma_is_canonical} implies that $J(w^i_{\mathcal{D}^i})\vee J(w^j_{\mathcal{D}^j})$ is a canonical join representation for any distinct $i,j \in \{1,\ldots, k\}$. Now, using Lemma~\ref{lem:flag}, we have that $\bigvee_{i = 1}^k J(w^i_{\mathcal{D}^i})$ is a canonical join representation. Thus $\{J(w^1_{\mathcal{D}^1}), \ldots, J(w^k_{\mathcal{D}^k})\} \subset \text{JI}(\text{Bic}(A))$ is a face of $\Delta^{CJ}(\text{Bic}(A))$.
\end{proof}

The remainder of this section is dedicated to proving the lemmas cited in the proof of Theorem~\ref{Thm_CJC}.

\begin{lemma}\label{nosplit}
Given $B \in \Bic(A)$ and distinct covering relations $(B_1,B), (B_2,B) \in \Cov(\Bic(A))$, let $w^1_{\mathcal{D}^1} = \widetilde{\lambda}(B_1,B)$ and $w^2_{\mathcal{D}^2} = \widetilde{\lambda}(B_2,B)$. The string $w^1$ is not a split of $w^2$, $w^2$ is not a split of $w^1$, and $w^1 \neq w^2$.
\end{lemma}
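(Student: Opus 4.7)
The plan is to apply Lemma~\ref{Lemma_covers_in_bic}, which characterizes covers in $\Bic(A)$ as $B' \lessdot B' \sqcup \{w\}$ where $B'$ contains exactly one split from each break of $w$. Applied to our two covers this gives $B = B_1 \sqcup \{w^1\} = B_2 \sqcup \{w^2\}$. The inequality $w^1 \neq w^2$ falls out immediately: were they equal, then $B_1 = B \setminus \{w^1\} = B \setminus \{w^2\} = B_2$, contradicting that the two covers are distinct.

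For the split claims I would argue by contradiction; the two directions are symmetric, so it suffices to show that $w^1$ is not a split of $w^2$. Assuming otherwise, write $w^2 = w^1 \gamma^{\pm 1} v$ for some arrow $\gamma$ and string $v$, so that $\{w^1, v\}$ is a break of $w^2$. I would first record two small observations: (i) $w^1 \in B_2$, since $w^1 \in B$ and $w^1 \neq w^2$; and (ii) $v \neq w^1$, since the proof of Lemma~\ref{Lemma_covers_in_bic} shows that no string of a brick gentle algebra contains a pattern of the form $u\alpha^{\pm 1}u$. Consequently, the ``exactly one split'' condition from Lemma~\ref{Lemma_covers_in_bic} applied to the cover $(B_2, B)$ and the break $\{w^1, v\}$ forces $v \notin B_2$.

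To close the argument I would pivot to the other cover and exploit that $B_1$ is biclosed, hence coclosed: every break of a string in $B_1$ must contain at least one split lying in $B_1$. Since $w^2 \in B_1$ (valid because $w^2 \neq w^1$), applying coclosure to the break $\{w^1, v\}$ together with $w^1 \notin B_1$ yields $v \in B_1 \subseteq B$. Combining $v \in B$ with $v \notin B_2 = B \setminus \{w^2\}$ forces $v = w^2$, so that $w^2 = w^1 \gamma^{\pm 1} w^2$, an impossible length identity. I do not expect any genuine obstacle here; the proof is just a careful bookkeeping exercise in which the ``exactly one split'' constraint on $B_2$ pushes $v$ out of $B_2$ while the coclosure of $B_1$ pulls $v$ into $B$, simultaneously pinning it down as $w^2$ itself.
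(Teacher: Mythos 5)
Your argument is correct and follows essentially the same route as the paper: both use Lemma~\ref{Lemma_covers_in_bic} to write $B_i = B\setminus\{w^i\}$, then apply coclosure of $B_1$ to the break $\{w^1,v\}$ of $w^2$ to force $v\in B_1\subseteq B$. The only divergence is the finish: the paper notes $w^1,v\in B_2$ while $w^2 = w^1\alpha^{\pm1}v\notin B_2$, contradicting closedness of $B_2$ directly, whereas you invoke the ``exactly one split'' condition to get $v\notin B_2$ and then pin $v=w^2$ for a length contradiction — a slightly longer but equally valid wrap-up (and it does also need the observation $v\neq w^1$, which the paper's version avoids).
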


\begin{proof}
Since $B_1 \neq B_2$, it is clear that $w^1\neq w^2$.

To complete the proof, it is enough to show that $w^1$ is not a split of $w^2$. Suppose that $w^1$ is a split of $w^2$. By Lemma~\ref{Lemma_covers_in_bic}, we have that $B_1=B\backslash\{w^1\}$ and $B_2=B\backslash\{w^2\}$. Now let $w^\prime \in \Str(A)$ denote the string satisfying $w^1\alpha^{\pm 1} w^\prime = w^2$ where $\alpha \in Q_1$. Observe that since $w^1 \not \in B_1$, we have $w^\prime \in B_1$. This implies that $w^\prime \in B$ and so $w^\prime \in B_2$. However, this means that $w^1, w^\prime \in B_2$, but $w^2 = w^1\alpha^{\pm 1} w^\prime \not \in B_2$, which contradicts that $B_2$ is closed.
\end{proof}

\begin{lemma}\label{Lemma_join_irr_containment}
Let $w_\mathcal{D} \in \mathcal{S}$, let $v \in \{w\}\sqcup \mathcal{D} \sqcup \bigcup_{u \in \mathcal{D}} S(u)$, and let $\mathcal{D}(v):= \{w^\prime \in J(w_\mathcal{D}): \text{$w^\prime$ is a split of $v$}\}$. Then $J(v_{\mathcal{D}(v)}) \in \JI(\Bic(A))$ and $J(v_{\mathcal{D}(v)}) \le J(w_\mathcal{D})$. 
\end{lemma}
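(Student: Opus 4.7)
The plan is to use the bijection $J(-):\mathcal{S}\to\JI(\Bic(A))$ from Proposition~5.17. First I would verify that $v_{\mathcal{D}(v)}$ is a valid label in $\mathcal{S}$, and then I would prove the containment $J(v_{\mathcal{D}(v)})\subseteq J(w_\mathcal{D})$ by checking that the generators of the former all lie in the latter.

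For the label condition, each element of $\mathcal{D}(v)$ is by construction a split of $v$, so the only thing to check is that no two distinct elements of $\mathcal{D}(v)$ arise from the same break of $v$. This follows immediately from Lemma~5.15: since $v\in\{w\}\sqcup\mathcal{D}\sqcup\bigcup_{u\in\mathcal{D}}S(u)$, exactly one split from each break of $v$ lies in $J(w_\mathcal{D})$. Hence $v_{\mathcal{D}(v)}\in\mathcal{S}$, and Proposition~5.17 yields $J(v_{\mathcal{D}(v)})\in\JI(\Bic(A))$.

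To prove $J(v_{\mathcal{D}(v)})\leq J(w_\mathcal{D})$, since $J(w_\mathcal{D})$ is closed it suffices to show the containment $\{v\}\sqcup\mathcal{D}(v)\sqcup\bigcup_{u\in\mathcal{D}(v)}S(u,\mathcal{D}(v))\subseteq J(w_\mathcal{D})$. The first two summands lie in $J(w_\mathcal{D})$ by definition, so the substantive task is to show that every $v'\in S(u,\mathcal{D}(v))$ with $u\in\mathcal{D}(v)$ belongs to $J(w_\mathcal{D})$.

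I expect this last step to be the main obstacle, and I would handle it by contradiction. Assume $v'\notin J(w_\mathcal{D})$, and write the break of $u$ producing $v'$ as $u=u_1\delta^{\pm1}v'$ or $u=v'\delta^{\pm1}u_1$. Since $u\in J(w_\mathcal{D})$ and $J(w_\mathcal{D})$ is coclosed (being biclosed by Lemma~5.15), we deduce $u_1\in J(w_\mathcal{D})$. Using that $u$ is a split of $v$ (so $v=v_a\mu^{\pm1}u$ or $v=u\mu^{\pm1}v_a$), a short case analysis on the four resulting arrangements shows that in two of them the arrow $\delta$ would make $v'$ itself a split of $v$, violating condition (i) in the definition of $S(u,\mathcal{D}(v))$; in the remaining two cases $u_1$ turns out to be a split of $v$ (via the break of $v$ at the position of $\delta$), so $u_1\in\mathcal{D}(v)$, and $v'$ concatenates with $u_1$ to form $u$, violating condition (ii). Either outcome is a contradiction, so $v'\in J(w_\mathcal{D})$, completing the argument.
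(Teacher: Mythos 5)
Your proof is correct, and it takes a somewhat more self-contained route than the paper. For the first claim the two arguments coincide: both invoke Lemma~\ref{lemma_J_biclosed} to show exactly one split from each break of $v$ lies in $J(w_\mathcal{D})$, hence $v_{\mathcal{D}(v)}\in\mathcal{S}$, and then Proposition~\ref{Prop_join_irr_des} gives $J(v_{\mathcal{D}(v)})\in\JI(\Bic(A))$. For the containment, the paper outsources the key inclusion to the proof of \cite[Lemma~4.3]{cliftondillerygarver} (translated from segments to strings), and in fact that reference yields the stronger statement that $\{v\}\sqcup\mathcal{D}(v)\sqcup\bigcup_{v'\in\mathcal{D}(v)}S(v')$ sits inside $\{w\}\sqcup\mathcal{D}\sqcup\bigcup_{u\in\mathcal{D}}S(u)$, not merely inside $J(w_\mathcal{D})$. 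You instead argue directly, using only the biclosedness of $J(w_\mathcal{D})$: assuming $v'\in S(u,\mathcal{D}(v))$ were missing from $J(w_\mathcal{D})$, coclosedness forces the complementary split $u_1$ of $u$ into $J(w_\mathcal{D})$, and a short case analysis on where $u$ sits in $v$ and where $v'$ sits in $u$ shows that either $v'$ is a split of $v$ (contradicting condition~(i) of $S(u,\mathcal{D}(v))$) or $u_1\in\mathcal{D}(v)$ and $v'$ concatenates with $u_1$ (contradicting condition~(ii)). This is an elementary replacement of the citation; it proves a slightly weaker inclusion (only into $J(w_\mathcal{D})$), but that weaker inclusion together with closedness of $J(w_\mathcal{D})$ is exactly what the lemma requires, so nothing is lost.
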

\begin{proof}
By Lemma~\ref{lemma_J_biclosed}, given $v \in \{w\}\sqcup \mathcal{D} \sqcup \bigcup_{u\in \mathcal{D}} S(u)$ and a break $\{w^1,w^2\}$ of $v$ exactly one of these splits belongs to $\{w\}\sqcup \mathcal{D}\sqcup \bigcup_{u \in \mathcal{D}} S(u)$. This implies that $v_{\mathcal{D}(v)} \in \mathcal{S}$. By Proposition~\ref{Prop_join_irr_des}, $J(v_{\mathcal{D}(v)})$ is a join-irreducible biclosed set.

Next, we show that $J(v_{\mathcal{D}(v)}) \le J(w_\mathcal{D})$. It follows from the proof of \cite[Lemma 4.3]{cliftondillerygarver} that $\{v\} \sqcup \mathcal{D}(v)\sqcup \bigcup_{v^\prime \in \mathcal{D}(v)}S(v^\prime)$ is contained in $\{w\} \sqcup \mathcal{D} \sqcup \bigcup_{u \in \mathcal{D}} S(u)$. Therefore the former is contained in $J(w_\mathcal{D})$. Since $J(w_\mathcal{D})$ is closed, we obtain that $J(v_{\mathcal{D}(v)}) \le J(w_\mathcal{D}).$\end{proof}

\begin{remark}
Lemma~\ref{Lemma_join_irr_containment} is false if $u \in J(w_\mathcal{D})\backslash\left(\{w\}\sqcup \mathcal{D} \sqcup \bigcup_{u \in \mathcal{D}}S(u)\right)$. This is because such a string $u$ must have a break $\{u^1, u^2\}$ where $u^1, u^2 \in \{w\}\sqcup \mathcal{D} \sqcup \bigcup_{u \in \mathcal{D}}S(u).$ Therefore, the expression $u_{\mathcal{D}(u)}$ is not an element of $\mathcal{S}$.
\end{remark}

\begin{lemma}\label{lemma_comp_not_canonical}
Given $w_\mathcal{D}, w^\prime_{\mathcal{D}^\prime} \in \mathcal{S}$. Assume that there exists $u^1, \ldots, u^k \in J(w_\mathcal{D})\cup J(w^\prime_{\mathcal{D}^\prime})$ with $k \ge 2$ such that $w = u^1\alpha_1^{\pm 1}u^2 \cdots u^{k-1}\alpha_{k-1}^{\pm 1}u^k$ for some $\alpha_1, \ldots, \alpha_{k-1} \in Q_1$. Then $J({w}_\mathcal{D}) \vee J(w^\prime_{\mathcal{D}^\prime})$ is not a canonical join representation.
\end{lemma}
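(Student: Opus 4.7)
The plan is to show that the label $w_\mathcal{D}$ does not lie in $\lambda_\downarrow(B)$, where $B := J(w_\mathcal{D}) \vee J(w'_{\mathcal{D}'})$. By Lemma~\ref{Lemma_cjr_cmr}, the canonical join representation of $B$ equals $\bigvee D$, where $D = \{j \in \JI(\Bic(A)) : \lambda(j_*, j) \in \lambda_\downarrow(B)\}$, and by Proposition~\ref{Prop_join_irr_des} we have $\lambda((J(w_\mathcal{D}))_*, J(w_\mathcal{D})) = w_\mathcal{D}$. So once $w_\mathcal{D} \notin \lambda_\downarrow(B)$ is established, $J(w_\mathcal{D}) \notin D$, and consequently $J(w_\mathcal{D}) \vee J(w'_{\mathcal{D}'})$ cannot be the canonical join representation of $B$.

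To show $w_\mathcal{D} \notin \lambda_\downarrow(B)$, I would invoke Lemma~\ref{Lemma_covers_in_bic}: any cover of $B$ with label $w_\mathcal{D}$ must be of the form $(B \setminus \{w\}, B)$, so in particular $B \setminus \{w\}$ must be biclosed. It therefore suffices to exhibit the failure of closedness for $B \setminus \{w\}$. Set $v_1 := u^1 \alpha_1^{\pm 1} u^2 \cdots \alpha_{k-2}^{\pm 1} u^{k-1}$ (and $v_1 := u^1$ when $k = 2$), and set $v_2 := u^k$. Each $u^i$ lies in $J(w_\mathcal{D}) \cup J(w'_{\mathcal{D}'}) \subseteq B$ by hypothesis, and $B$ is closed as a join in $\Bic(A)$, so iterating the closure axiom along the decomposition of $v_1$ yields $v_1 \in B$; clearly $v_2 = u^k \in B$ as well. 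The decomposition $w = v_1 \alpha_{k-1}^{\pm 1} v_2$ ensures that neither $v_1$ nor $v_2$ equals $w$, since both have strictly smaller length. Hence $v_1, v_2 \in B \setminus \{w\}$, yet their concatenation $v_1 \alpha_{k-1}^{\pm 1} v_2 = w$ has been removed, so $B \setminus \{w\}$ is not closed.

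The main conceptual move is to avoid constructing a strictly more refined irredundant join representation of $B$ by hand, and instead exploit the CU-labeling description of canonical join representations from Lemma~\ref{Lemma_cjr_cmr}; this reduces the entire problem to verifying that $w_\mathcal{D}$ does not label any cover of $B$. Once that choice is made, the only step requiring care is the check that the hypothesis $k \ge 2$ forces the splitting $w = v_1 \alpha_{k-1}^{\pm 1} v_2$ to produce two strings strictly shorter than $w$, which is where $k \ge 2$ is essential; the remaining steps are routine applications of the closure axiom and of the cover characterization in Lemma~\ref{Lemma_covers_in_bic}.
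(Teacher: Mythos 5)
Your argument is correct, but it takes a genuinely different route from the paper's own proof. The paper first reduces to the situation where $w\neq w'$, neither is a split of the other (invoking Lemma~\ref{nosplit}), and then \emph{constructs} an explicit strict refinement: it chooses a maximal-length decomposition $w=u^1\alpha_1^{\pm1}\cdots\alpha_{k-1}^{\pm1}u^k$, picks out those $u^{i_j}$ lying in $J(w_\mathcal{D})\setminus J(w'_{\mathcal{D}'})$, and then proves that $\bigl(\bigvee_j J(u^{i_j}_{\mathcal{D}(u^{i_j})})\bigr)\vee J(w'_{\mathcal{D}'})$ gives the same join, which requires a two-sided containment argument and some case analysis. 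You instead bypass the construction of a refinement entirely: using Lemma~\ref{Lemma_cjr_cmr} and Proposition~\ref{Prop_join_irr_des}, the canonical joinand set of $B:=J(w_\mathcal{D})\vee J(w'_{\mathcal{D}'})$ is determined by $\lambda_\downarrow(B)$, and you show $w_\mathcal{D}\notin\lambda_\downarrow(B)$ by observing (via Lemma~\ref{Lemma_covers_in_bic}) that the only candidate lower cover labeled by $w$ would be $B\setminus\{w\}$, and this set fails to be closed because $v_1=u^1\alpha_1^{\pm1}\cdots\alpha_{k-2}^{\pm1}u^{k-1}$ and $v_2=u^k$ are both strictly shorter than $w$, both lie in $B\setminus\{w\}$, yet their concatenation $w$ does not. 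Your approach is shorter and cleaner --- it needs no preliminary case splitting, no careful choice of a ``maximal'' decomposition, and no irredundancy verification for the replacement representation --- at the cost of being purely an impossibility argument: the paper's construction also exhibits an explicit finer join representation, which is information the labeling argument does not produce. Both proofs are sound; yours is arguably the more economical once the CU-labeling machinery of Section~\ref{sec:lattice_prelim} is in hand.
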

\begin{proof}
We can assume that $w \neq w^\prime$, $w^\prime$ is not a split of $w$, and $w$ is not a split of $w^\prime$, otherwise we obtain the desired result from Lemma~\ref{nosplit} and Lemma~\ref{Lemma_cjr_cmr}.

Assume that there exists $u^1, \ldots, u^k \in J(w_\mathcal{D})\cup J(w^\prime_{\mathcal{D}^\prime})$ such that $$w = u^1\alpha_1^{\pm 1}u^2 \cdots u^{k-1}\alpha_{k-1}^{\pm 1}u^k$$ with $k \ge 2$ for some $\alpha_1, \ldots, \alpha_{k-1} \in Q_1$. Observe that $w$ has such an expression where the following hold: 
\begin{itemize}
\item[i)] there exists $i \in \{1,\ldots, k\}$ such that $u^i \in J(w_\mathcal{D})\backslash J(w^\prime_{\mathcal{D}^\prime})$, and 
\item[ii)]$u^i$ cannot be expressed as the concatenation of at least two elements of $J(w_\mathcal{D})\cup J(w^\prime_{\mathcal{D}^\prime})$ for any $i \in \{1,\ldots, k\}$.
\end{itemize}
From the set of all such expressions for $w$, let $u^1, \ldots, u^k \in J(w_\mathcal{D}) \cup J(w^\prime_{\mathcal{D}^\prime})$ have the following properties: 
\begin{itemize}
\item string $u^1$ satisfies ii) and is a maximal length string satisfying ii);  
\item assuming, by induction, that $u^1, \ldots, u^{i}$ satisfy ii) and are maximal length strings satisfying ii), string $u^{i+1}$ satisfies ii) and is a maximal length string satisfying ii).
\end{itemize}
Now let $u^{i_1},\ldots, u^{i_\ell}$ denote the strings in this expression for $w$ that belong to $J(w_\mathcal{D})\backslash J(w^\prime_{\mathcal{D}^\prime}).$ We show that $\left(\bigvee_{j=1}^\ell J(u^{i_j}_{\mathcal{D}(u^{i_j})})\right) \vee J(w^\prime_{\mathcal{D}^\prime})$ is a refinement of $J({w}_\mathcal{D}) \vee J(w^\prime_{\mathcal{D}^\prime})$ by showing that the two expressions are equal.

First, we know that $u^{i_j} \in \{w\}\sqcup \mathcal{D} \sqcup \bigcup_{u \in \mathcal{D}}S(u)$ for all $j \in \{1, \ldots, \ell\}$ because each $u^{i_j}$ is a substring of $w$. Therefore, by Lemma~\ref{Lemma_join_irr_containment}, $J(u^{i_j}_{\mathcal{D}^{i_j}}) \le J(w_{\mathcal{D}})$ for all $j \in \{1, \ldots, \ell\}$. This implies that $\left(\bigvee_{j=1}^\ell J(u^{i_j}_{\mathcal{D}(u^{i_j})})\right) \vee J(w^\prime_{\mathcal{D}^\prime}) \le J({w}_\mathcal{D}) \vee J(w^\prime_{\mathcal{D}^\prime})$.

Next, we prove the opposite containment. Note that any element of $J(w_\mathcal{D}) \vee J(w^\prime_{\mathcal{D}^\prime})$ is a concatenation of elements of $J(w^\prime_{\mathcal{D}^\prime})$ and substrings of $w$ that are contained in $J(w_\mathcal{D})\vee J(w^\prime_{\mathcal{D}^\prime})$. As $\left(\bigvee_{j=1}^\ell J(u^{i_j}_{\mathcal{D}(u^{i_j})})\right) \vee J(w^\prime_{\mathcal{D}^\prime})$ is closed, it is enough to prove that any substring of $w$ contained in $J(w_\mathcal{D})\vee J(w^\prime_{\mathcal{D}^\prime})$ belongs to $\left(\bigvee_{j=1}^\ell J(u^{i_j}_{\mathcal{D}(u^{i_j})})\right) \vee J(w^\prime_{\mathcal{D}^\prime})$. If $v \in J(w_\mathcal{D})\vee J(w^\prime_{\mathcal{D}^\prime})$ is a substring of $u^i$ for some $i = 1, \ldots, k$, we have that $v$ is a concatenation of strings in $J(u^{i}_{\mathcal{D}(u^i)}) \cup J(w^\prime_{\mathcal{D}^\prime})$. Thus, $v$ belongs to $\left(\bigvee_{j=1}^\ell J(u^{i_j}_{\mathcal{D}(u^{i_j})})\right) \vee J(w^\prime_{\mathcal{D}^\prime}).$ This means we must show that if $v \in J(w_\mathcal{D})\vee J(w^\prime_{\mathcal{D}^\prime})$ and $v$ is a substring of $w$, then $v \in \left(\bigvee_{j=1}^\ell J(u^{i_j}_{\mathcal{D}(u^{i_j})})\right) \vee J(w^\prime_{\mathcal{D}^\prime})$ when one of the following cases holds:
\begin{itemize}
\item[\textit{1)}] $v = u^1 \alpha_1^{\pm 1} u^2  \cdots  u^{s-1}\alpha_{s-1}^{\pm 1} u_s^\prime$ for some $u_s^\prime \in J(w_\mathcal{D})\cup J(w^\prime_{\mathcal{D}^\prime})$,
\item[\textit{2)}] $v = u_r^\prime \alpha_{r}^{\pm 1} u^{r+1}  \cdots  u^{k-1}\alpha_{k-1}^{\pm 1} u^k$ for some $u_r^\prime \in J(w_\mathcal{D})\cup J(w^\prime_{\mathcal{D}^\prime})$, or 
\item[\textit{3)}] $v = u_r^\prime \alpha_{r}^{\pm 1} u^{r+1}  \cdots  u^{s-1}\alpha_{s-1}^{\pm 1} u_s^\prime$ for some $u_r^\prime, u_s^\prime \in J(s_\mathcal{D})\cup J(s^\prime_{\mathcal{D}^\prime})$.
\end{itemize}
We verify \textit{Case 2)}, and the proof of \textit{Case 1)} and \textit{3)} is similar to that of \textit{Case 2)}.

\textit{Case 2):} We show that $u_r^\prime \in J(u^r_{\mathcal{D}(u^r)}).$ Note that $J(u^r_{\mathcal{D}(u^r)})$ is well-defined and $J(u^r_{\mathcal{D}(u^r)}) \le J(w^\prime_{\mathcal{D}^\prime})$ by our proof of the first containment. Suppose $u_r^\prime \not \in J(u^r_{\mathcal{D}(u^r)})$. Since $u_r^\prime$ is a split of $u^r$, we may write $u_r^{\prime\prime}\alpha^{\pm 1} u_r^\prime = u^r$ for some $u^{\prime\prime}_r \in \Str(A)$ and some $\alpha \in Q_1$. As $u_r^\prime$ does not belong to $J(u^r_{\mathcal{D}(u^r)})$, we know that $u_r^{\prime\prime} \in \mathcal{D}(u^r)$. We also know $u_r^\prime \in J(w_\mathcal{D})\cup J(w^\prime_{\mathcal{D}^\prime})$ and so the expression $u^r = u_r^{\prime\prime}\alpha^{\pm 1} u_r^\prime$ contradicts our choice of $u^r$.
\end{proof}




\begin{lemma}\label{Lemma_is_canonical}
Let ${w}_\mathcal{D}, w^\prime_{\mathcal{D}^\prime} \in \mathcal{S}$ be labels with the following properties:
\begin{itemize}
\item[1)] strings $w$ and $w^\prime$ are distinct,
\item[2)] neither $w$ nor $w^\prime$ is expressible as a concatenation of at least two strings in $J(w_\mathcal{D})\cup J(w^\prime_{\mathcal{D}^\prime})$, and
\item[3)] neither $J(w_{\mathcal{D}}) \le J(w^{\prime}_{\mathcal{D}^\prime})$ nor $J(w^{\prime}_{\mathcal{D}^\prime}) \le J(w_{\mathcal{D}})$.
\end{itemize}Then $J({w}_\mathcal{D}) \vee J(w^\prime_{\mathcal{D}^\prime})$ is a canonical join representation.
\end{lemma}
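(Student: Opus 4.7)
The plan is to combine Lemma~\ref{Lemma_cjr_cmr} with an irredundancy argument. Set $B := J(w_\mathcal{D}) \vee J(w'_{\mathcal{D}'})$. By Lemma~\ref{Lemma_cjr_cmr}, the canonical join representation of $B$ equals $\bigvee D$, where $D = \{J(v_\mathcal{E}) : v_\mathcal{E} \in \lambda_\downarrow(B)\}$, so it suffices to show $\lambda_\downarrow(B) = \{w_\mathcal{D}, w'_{\mathcal{D}'}\}$. Condition (1), combined with the bijection in Proposition~\ref{Prop_join_irr_des}, already ensures $J(w_\mathcal{D}) \neq J(w'_{\mathcal{D}'})$.

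First I would prove $\{w_\mathcal{D}, w'_{\mathcal{D}'}\} \subseteq \lambda_\downarrow(B)$ by verifying that $B \setminus \{w\}$ is biclosed with $\lambda(B \setminus \{w\}, B) = w_\mathcal{D}$; the argument for $w'$ is symmetric. For closedness, if some $v_1, v_2 \in B \setminus \{w\}$ satisfied $v_1 \gamma^{\pm 1} v_2 = w$, then expanding each $v_i$ as a concatenation of strings in $J(w_\mathcal{D}) \cup J(w'_{\mathcal{D}'})$ (using that $B = \overline{J(w_\mathcal{D}) \cup J(w'_{\mathcal{D}'})}$) would express $w$ as a concatenation of at least two such strings, contradicting condition (2). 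For coclosedness, the only potential failure is an element $u = w \alpha^{\pm 1} u_2 \in B \setminus \{w\}$ with $u_2 \notin B$; here I would exploit the fact that every string in $J(w_\mathcal{D})$ (resp.\ $J(w'_{\mathcal{D}'})$) has vertex set contained in that of $w$ (resp.\ $w'$), together with the self-avoiding property of strings over a brick gentle algebra (Proposition~\ref{Ind-Brick Finite}). A case analysis on where the prefix $w$ falls relative to a concatenation decomposition $u = y^1 \gamma_1 \cdots y^r$ with $y^i \in J(w_\mathcal{D}) \cup J(w'_{\mathcal{D}'})$ either realises $u_2$ directly as a sub-concatenation in $B$ or yields a decomposition of $w$ as a concatenation of at least two strings from $J(w_\mathcal{D}) \cup J(w'_{\mathcal{D}'})$, contradicting condition (2); condition (3) is used to eliminate the residual case in which the matching decoration would force $J(w_\mathcal{D}) \le J(w'_{\mathcal{D}'})$. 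A parallel vertex-set argument shows that the splits of $w$ inside $B \setminus \{w\}$ are exactly $\mathcal{D}$, producing the required label.

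Next, to establish $\lambda_\downarrow(B) \subseteq \{w_\mathcal{D}, w'_{\mathcal{D}'}\}$, I would invoke irredundancy of the canonical join representation. By definition $\bigvee D$ is irredundant, and the previous step places both $J(w_\mathcal{D})$ and $J(w'_{\mathcal{D}'})$ in $D$. Since $J(w_\mathcal{D}) \vee J(w'_{\mathcal{D}'}) = B$, any additional $J(v_\mathcal{E}) \in D$ would satisfy $\bigvee\!\bigl(D \setminus \{J(v_\mathcal{E})\}\bigr) \ge J(w_\mathcal{D}) \vee J(w'_{\mathcal{D}'}) = B$, contradicting irredundancy. Hence $D = \{J(w_\mathcal{D}), J(w'_{\mathcal{D}'})\}$ and $J(w_\mathcal{D}) \vee J(w'_{\mathcal{D}'})$ is the canonical join representation of $B$.

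The main obstacle is the coclosedness verification for $B \setminus \{w\}$: controlling how the string $w$ can overlap with a concatenation decomposition in $B$ is the delicate point, and it is precisely here that the brick gentle self-avoiding property, the vertex-set observation, and all three of conditions (1)--(3) enter essentially.
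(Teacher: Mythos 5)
Your proposal takes a genuinely different route from the paper. The paper's proof works directly with the definition of canonical join representation: it first shows $J(w_\mathcal{D})\vee J(w'_{\mathcal{D}'})$ is irredundant (using property 3), then shows it refines any other irredundant join representation $\bigvee J(u^i_{\mathcal{D}^i})$ of $B$ by arguing that $w$ lands in a single $J(u^i_{\mathcal{D}^i})$ (via property 2), identifying $\mathcal{D}$ as the set of splits of $w$ inside $J(u^i_{\mathcal{D}^i})$, and applying Lemma~\ref{Lemma_join_irr_containment}. You instead invoke Lemma~\ref{Lemma_cjr_cmr} to reduce to computing $\lambda_\downarrow(B)$, then argue that $B\setminus\{w\}$ and $B\setminus\{w'\}$ are biclosed covers of $B$ with the expected labels, and finish with an irredundancy argument to rule out additional down-labels. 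Both strategies are legitimate; yours reframes the lemma entirely in terms of covering relations.

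That said, the coclosedness verification for $B\setminus\{w\}$ — which you rightly flag as the delicate point — is doing more work than your sketch acknowledges, and it is not quite clear you have the full argument in hand. Writing $u = w\gamma^{\pm 1}u_2 \in B$ with $u_2\notin B$ and expanding $u$ as a concatenation of generators of $J(w_\mathcal{D})\cup J(w'_{\mathcal{D}'})$, the case where the cut point falls inside a single generator $y^1$ with $y^1 = w\delta^{\pm 1}y^1_2$, $y^1_2 \notin J(w_\mathcal{D})\cup J(w'_{\mathcal{D}'})$, is the real sticking point. When $y^1$ is a generator of $J(w_\mathcal{D})$, the vertex-set fact kills this case (since $y^1$ would strictly enlarge $V(w)$ yet must have vertex set inside $V(w)$). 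But when $y^1$ is a generator of $J(w'_{\mathcal{D}'})$, one must observe that $w$ becomes a proper substring of $w'$ lying in $J(w'_{\mathcal{D}'})$, hence a generator of $J(w'_{\mathcal{D}'})$, then apply Lemma~\ref{Lemma_join_irr_containment} with $\mathcal{D}'(w)$ and split into two subcases: if $\mathcal{D}'(w) = \mathcal{D}$ one contradicts property 3; if $\mathcal{D}'(w)\ne\mathcal{D}$ one uses Lemma~\ref{lemma_J_biclosed} and coclosedness of $J(w'_{\mathcal{D}'})$ to build a break of $w$ with one split in $J(w_\mathcal{D})$ and one in $J(w'_{\mathcal{D}'})$, contradicting property 2. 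Your phrase ``condition (3) is used to eliminate the residual case'' captures only half of this; property 2 is also essential here, and the deduction is not automatic. Similarly, your ``parallel vertex-set argument'' for identifying the label as $w_\mathcal{D}$ is a misattribution: the correct argument that no split of $w$ outside $\mathcal{D}$ lies in $B$ uses Lemma~\ref{lemma_J_biclosed} together with property 2, not vertex sets. So while your plan is sound in outline and would succeed once these gaps are filled, it ends up replicating most of the concatenation bookkeeping that the paper's more direct verification already performs.
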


\begin{proof}By the stated properties satisfied by $w_\mathcal{D}$ and $w^\prime_{\mathcal{D}^\prime},$ there exist strings $u \in J(w_{\mathcal{D}})\backslash J(w^\prime_{\mathcal{D}^\prime})$ and $u^\prime \in J(w^\prime_{\mathcal{D}^\prime})\backslash J(w_{\mathcal{D}})$. This implies that $J(w_\mathcal{D}) < J(w_\mathcal{D})\vee J(w^\prime_{\mathcal{D}^\prime})$ and $J(w^\prime_{\mathcal{D}^\prime}) < J(w_\mathcal{D})\vee J(w^\prime_{\mathcal{D}^\prime}).$ Therefore, the join representation $J(w_\mathcal{D})\vee J(w^\prime_{\mathcal{D}^\prime})$ is irredundant.

Next, suppose that $J(w_\mathcal{D})\vee J(w^\prime_{\mathcal{D}^\prime}) = \bigvee_{i = 1}^k J(u^i_{\mathcal{D}^i})$ where the latter is irredundant. We will show that $J(w_\mathcal{D}) \le J(u^i_{\mathcal{D}^i})$ for some $i = 1, \ldots, k$, and one uses the same strategy to prove that $J(w^\prime_{\mathcal{D}^\prime}) \le J(u^j_{\mathcal{D}^j})$ for some $j = 1, \ldots, k$.

Since $w \in \bigvee_{i = 1}^k J(u^i_{\mathcal{D}^i})$, there exist $u_{i_j} \in J(u^{i_j}_{\mathcal{D}^{i_j}})$ with $j = 1, \ldots, \ell$ such that $w = u_{i_1}\alpha_{i_1}^{\pm 1}u_{i_2} \cdots u_{i_{\ell-1}}\alpha_{i_{\ell-1}}^{\pm 1}u_{i_\ell}$ for some $\alpha_{i_{j}} \in Q_1$ with $j \in \{1, \ldots, \ell -1\}$. By the fact that $J(w_\mathcal{D})\vee J(w^\prime_{\mathcal{D}^\prime}) = \bigvee_{i = 1}^k J(u^i_{\mathcal{D}^i})$, we can assume $$u_{i_j} \in \left(\{w\}\sqcup \mathcal{D}\sqcup \bigcup_{u \in \mathcal{D}}S(u)\right)\bigcup \left(\{w^\prime\} \sqcup \mathcal{D}^\prime \sqcup \bigcup_{u^\prime \in \mathcal{D}^\prime}S(u^\prime)\right)$$ for all $j = 1, \ldots, \ell$. As $w$ is not expressible as a concatenation of at least two strings from $J(w_\mathcal{D})\cup J(w^\prime_{\mathcal{D}^\prime})$, this implies that $\ell = 1$ and so $w \in J(u^i_{\mathcal{D}^i})$ for some $i = 1, \ldots, k$.


Now let $u \in \mathcal{D}$. We can write $w = u \alpha^{\pm 1} v$ for some $v \in \Str(A)$ and some $\alpha \in Q_1$. Suppose $u \not \in J(u^i_{\mathcal{D}^i})$.  Since $J(u^i_{\mathcal{D}^i})$ is biclosed and $w \in J(u^i_{\mathcal{D}^i})$, we know $v \in J(u^i_{\mathcal{D}^i}).$ However, by the fact that $J(w_\mathcal{D})\vee J(w^\prime_{\mathcal{D}^\prime}) = \bigvee_{i = 1}^k J(u^i_{\mathcal{D}^i})$, the equation $w = u \alpha^{\pm 1} v$ contradicts that $w$ is not expressible as a concatenation of at least two strings from $J(w_\mathcal{D})\cup J(w^\prime_{\mathcal{D}^\prime})$. This implies that $\mathcal{D} \subseteq J(u^i_{\mathcal{D}^i})$ and no other splits of $w$ belong to $J(u^i_{\mathcal{D}^i})$. Thus $u \in J(u^i_{\mathcal{D}^i})$ so $\mathcal{D} = \{u \in J(u^i_{\mathcal{D}^i}): \ u \text{ is a split of } w\}.$

We now conclude from Lemma~\ref{Lemma_join_irr_containment} that $J(w_\mathcal{D}) \le J(u^i_{\mathcal{D}^i})$ so $J({w}_\mathcal{D}) \vee J(w^\prime_{\mathcal{D}^\prime})$ is a canonical join representation.
\end{proof}

\section{The shard intersection order of $\Bic(A)$}\label{sec:shard}

We now relate the shard intersection order $\Psi(\Bic(A))$ to the lattice of wide shadows $\widshad(\Pi(A))$.

\begin{theorem}\label{thm_psi_shad_isom}
If $A$ is a brick gentle algebra, there is a poset isomorphism given by $$\begin{array}{rcl}\Psi(\Bic(A)) &\stackrel{\vartheta}{\longrightarrow} & \widshad(\Pi(A))\\ \psi(B) & \longmapsto & \add(\bigoplus_{w_\mathcal{D} \in \psi(B)} M(\str(w_{\mathcal{D}}))).\end{array}$$
\end{theorem}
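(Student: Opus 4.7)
The plan is to exhibit $\vartheta$ as a bijection by realizing the right-hand side as a concrete wide subcategory of $\modu(\Pi(A))$ built from the canonical join representation of $B$, and then to match the list of indecomposable summands on both sides.

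First I would unpack $\psi(B)$ via the CU-labeling of Proposition~\ref{Prop_CU_labeling_bic}. By Lemmas~\ref{cu-label_ji_mi} and~\ref{Lemma_cjr_cmr}, the set $\lambda_\downarrow(B) = \{w^1_{\mathcal{D}^1},\ldots,w^k_{\mathcal{D}^k}\}$ determines the canonical join representation $B = \bigvee_{i=1}^k J(w^i_{\mathcal{D}^i})$ of $B$, and the elements of $\Bic(A)$ covered by $B$ are $B\setminus\{w^i\}$. Write $S_i := M(\str(w^i_{\mathcal{D}^i}))$. By Lemma~\ref{bricks_in_Pi} each $S_i$ is a brick in $\modu(\Pi(A))$, and by Corollary~\ref{Cor_bic_hom_orthog} the bricks $S_1,\ldots,S_k$ are pairwise $\Hom$-orthogonal. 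By the standard theorem of Ringel, the subcategory $\mathcal{W}_B := \Filt(S_1,\ldots,S_k)$ is then a wide subcategory of $\modu(\Pi(A))$ with simple objects exactly $S_1,\ldots,S_k$.

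The central step is the identification
\[
\mathcal{W}_B\cap\mathcal{M}\;=\;\add\!\Bigl(\bigoplus_{w_{\mathcal{D}}\in\psi(B)}M(\str(w_{\mathcal{D}}))\Bigr),
\]
which proves that $\vartheta(\psi(B))$ is indeed a wide shadow. For the inclusion $\supseteq$, I would show that for every label $w_{\mathcal{D}}\in\psi(B)$ the corresponding module $M(\str(w_{\mathcal{D}}))$ admits a filtration by the $S_i$. By the definition of $\psi(B)$ such a label arises as $\lambda(C,C')$ for a cover $C\lessdot C'$ inside the interval $[\bigwedge_i (B\setminus\{w^i\}),B]$, and I would argue by induction on the length of a maximal chain from $\bigwedge_i(B\setminus\{w^i\})$ to $C'$, using the polygon description of intervals $[B',B_1\vee B_2]$ from the proof of Proposition~\ref{Prop_CU_labeling_bic} together with the extension $0\to M(\str(v_{\mathcal{D}}))\to M(\widetilde{u})\to M(\str(v'_{\mathcal{D}'}))\to 0$ used in Lemma~\ref{biclosed map}. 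For the inclusion $\subseteq$, any indecomposable $M(\widetilde{u})\in\mathcal{M}\cap\mathcal{W}_B$ corresponds via Lemma~\ref{Lemma_labels_to_Pi_strings} to a unique label $u_{\mathcal{D}'}$, and the existence of a composition series by the $S_i$ translates, under the string-module dictionary used throughout Section~\ref{torshad_section}, into an expression exhibiting $u_{\mathcal{D}'}$ as a label appearing on a cover between $\bigwedge_i(B\setminus\{w^i\})$ and $B$, i.e.\ an element of $\psi(B)$.

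Once the description $\vartheta(\psi(B))=\mathcal{W}_B\cap\mathcal{M}$ is in place, the remaining properties are short. Order-preservation of $\vartheta$ is immediate from its definition, since $\psi(B)\subseteq\psi(B')$ forces a termwise inclusion of the listed indecomposables. For injectivity, the simple objects of $\mathcal{W}_B$ can be read off $\vartheta(\psi(B))$ as the indecomposable summands admitting no nonzero maps into the other summands (using Corollary~\ref{Cor_bic_hom_orthog} in one direction and the fact that $\Filt$ contains no proper Hom-orthogonal subobjects in the other), and these recover the canonical joinands $\{w^i_{\mathcal{D}^i}\}$, hence $B$. For surjectivity, given any wide shadow $\widetilde{\mathfrak{W}}=\mathcal{W}\cap\mathcal{M}$, take the indecomposables of $\widetilde{\mathfrak{W}}$ that are simple in $\mathcal{W}$; their corresponding labels satisfy the three conditions of Theorem~\ref{Thm_CJC} (distinctness, non-concatenation, incomparability follow from the Hom-orthogonality of simples in a wide subcategory combined with the combinatorial translation of Corollary~\ref{Cor_bic_hom_orthog}), hence form a canonical joinand set, determining the required $B$. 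Order-reflection then follows by the same reconstruction. The main obstacle is the combinatorial identification in the displayed equation above: translating ``admits a filtration by the $S_i$'' into ``labels a cover in the interval $[\bigwedge_i(B\setminus\{w^i\}),B]$'' requires a careful induction on polygon intervals that keeps track of splits of the underlying strings.
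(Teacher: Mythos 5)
Your proposal follows essentially the same strategy as the paper: use the canonical join representation of $B$, Hom-orthogonality of the bricks $S_i = M(\str(w^i_{\mathcal{D}^i}))$ (Corollary~\ref{Cor_bic_hom_orthog}), and Ringel's theorem to obtain a wide subcategory $\mathcal{W}_B=\Filt(S_1,\ldots,S_k)$, and then match $\mathcal{W}_B\cap\mathcal{M}$ with $\psi(B)$ via the polygon description of intervals. The forward direction (well-definedness and order-preservation) agrees with the paper's Lemma~\ref{lemma_Psi_to_shad}.

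The gap lies in your surjectivity argument. You propose to "take the indecomposables of $\widetilde{\mathfrak{W}}$ that are simple in $\mathcal{W}$" and check their labels satisfy the conditions of Theorem~\ref{Thm_CJC}. But you have not shown this set is nonempty, nor that it recovers the full wide shadow. A priori the smallest wide subcategory $\mathcal{W}$ containing $\widetilde{\mathfrak{W}}$ could have simple objects lying outside $\mathcal{M}$, in which case the set you have selected may be empty or too small to reconstruct $\widetilde{\mathfrak{W}}$. The paper circumvents this by defining a \emph{combinatorial} notion of simple object, $\Sim(\widetilde{\mathfrak{W}})$: the modules $M(\str(w_\mathcal{D}))\in\widetilde{\mathfrak{W}}$ such that $w$ appears in exactly one label in $W$. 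Lemma~\ref{lemma_shad_simples} proves $\Sim(\widetilde{\mathfrak{W}})$ is nonempty via a minimal-length argument; Lemma~\ref{lemma_psiB_in_W} proves the corresponding join-irreducibles form a face of $\Delta^{CJ}(\Bic(A))$, yielding a biclosed set $B$ with $\psi(B)\subset W$; and Lemma~\ref{lemma_W_in_psiB} proves by induction on string length that every indecomposable of $\widetilde{\mathfrak{W}}$ is a concatenation of elements of $\Sim(\widetilde{\mathfrak{W}})$, giving $W\subset\psi(B)$. Only after this analysis does one see that $\mathcal{W}=\Filt(\Sim(\widetilde{\mathfrak{W}}))$ and that the simple objects of $\mathcal{W}$ are precisely $\Sim(\widetilde{\mathfrak{W}})$, all of which lie in $\mathcal{M}$. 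Your phrase "the remaining properties are short" substantially underestimates this part: the combinatorial work of Lemmas~\ref{lemma_shad_simples}--\ref{lemma_W_in_psiB} is the heart of the surjectivity argument, and either it must be reproduced, or you need an independent proof that the simples of $\mathcal{W}$ belong to $\mathcal{M}$.
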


We prove this by establishing several lemmas. 

\begin{lemma}\label{lemma_Psi_to_shad}
One has the following order-preserving map 
$$\begin{array}{rcl}\Psi(\Bic(A)) & \stackrel{\vartheta}{\longrightarrow}& \widshad(\Pi(A))\\ \psi(B) & \longmapsto & \add(\bigoplus_{w_\mathcal{D} \in \psi(B)} M(\str(w_{\mathcal{D}}))).\end{array}$$
\end{lemma}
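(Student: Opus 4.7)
The goal is to verify two things: that for each $B \in \Bic(A)$ the additive subcategory $\vartheta(\psi(B))$ is indeed a wide shadow, and that $\vartheta$ preserves the order on $\Psi(\Bic(A))$.

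Order preservation is immediate from the definition of $\vartheta$. If $\psi(B_1) \subseteq \psi(B_2)$, then the set of indecomposable generators $\{M(\str(w_\mathcal{D})) : w_\mathcal{D} \in \psi(B_1)\}$ is a subset of $\{M(\str(w_\mathcal{D})) : w_\mathcal{D} \in \psi(B_2)\}$, hence the containment of the additive closures follows. So the real content of the lemma is well-definedness: producing, for each $B$, a wide subcategory $\mathcal{W}_B \in \wide(\Pi(A))$ such that $\mathcal{W}_B \cap \mathcal{M} = \vartheta(\psi(B))$.

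For this, I plan to set $\mathcal{S}_B := \{M(\str(w_\mathcal{D})) : w_\mathcal{D} \in \psi(B)\}$ and take $\mathcal{W}_B := \Filt(\mathcal{S}_B)$, the smallest extension-closed subcategory of $\modu(\Pi(A))$ containing $\mathcal{S}_B$. Each module in $\mathcal{S}_B$ is a brick by Lemma~\ref{bricks_in_Pi}. The key claim, generalizing Corollary~\ref{Cor_bic_hom_orthog} from canonical joinands to all labels in $\psi(B)$, is that $\mathcal{S}_B$ is a \emph{semibrick}, i.e. $\Hom_{\Pi(A)}(M(\str(w^1_{\mathcal{D}^1})), M(\str(w^2_{\mathcal{D}^2}))) = 0$ whenever $w^1_{\mathcal{D}^1}, w^2_{\mathcal{D}^2}$ are distinct elements of $\psi(B)$. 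I would establish this by unpacking the definition: every label in $\psi(B)$ arises as $\lambda(B',B'')$ for some covering relation inside $[\bigwedge_i J(w^i_{\mathcal{D}^i}), B]$, and then running the image/submodule analysis of Corollary~\ref{Cor_bic_hom_orthog}, using Theorem~\ref{Thm_CJC} to rule out the obstructing substring concatenations between distinct labels. Once semibrickness is in hand, Ringel's theorem shows that $\mathcal{W}_B = \Filt(\mathcal{S}_B)$ is a wide subcategory whose simple objects are exactly the elements of $\mathcal{S}_B$, so $\mathcal{W}_B \cap \mathcal{M} \in \widshad(\Pi(A))$.

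It then remains to check $\mathcal{W}_B \cap \mathcal{M} = \vartheta(\psi(B))$. The containment $\supseteq$ is automatic. For $\subseteq$, any indecomposable $X \in \mathcal{W}_B \cap \mathcal{M}$ is both a string module $M(\widetilde{v})$ for some $\widetilde{v} \in \widetilde{\Str}(A)$ and an iterated extension of semibrick members, so $\widetilde{v}$ is a concatenation of the underlying strings of the factors; I would show, using the self-avoiding property of strings in $\widetilde{\Str}(A)$ coming from Proposition~\ref{Ind-Brick Finite} together with the structure of $\psi(B)$, that this concatenation must itself occur as $\str(w_\mathcal{D})$ for some $w_\mathcal{D} \in \psi(B)$. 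The main obstacle will be the semibrick claim, since $\psi(B)$ typically contains many labels beyond the canonical joinands; controlling homomorphisms between the corresponding string modules in $\Pi(A)$ requires a careful bookkeeping argument relating common substrings of $\str(w_\mathcal{D})$ labels and the constraints imposed by membership in $\psi(B)$.
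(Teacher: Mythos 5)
Your proposal hinges on the claim that $\mathcal{S}_B := \{M(\str(w_\mathcal{D})) : w_\mathcal{D} \in \psi(B)\}$ is a semibrick, and this is false in general. The set $\psi(B)$ contains not only the labels of the covers $(J(w^i_{\mathcal{D}^i})_*, J(w^i_{\mathcal{D}^i}))$ coming from the canonical joinands of $B$, but also the labels of \emph{all} covering relations inside the interval $[\bigwedge_i y_i, B]$. Referring to Figure~\ref{polygons_fig}, whenever this interval contains a pentagon or hexagon, $\psi(B)$ contains a label $u_\mathcal{D}$ whose string $\str(u_\mathcal{D})$ is a genuine concatenation of two of the joinand strings $\str(w^i_{\mathcal{D}^i})$ and $\str(w^j_{\mathcal{D}^j})$. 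In that case $M(\str(u_\mathcal{D}))$ is an extension with one of $M(\str(w^i_{\mathcal{D}^i}))$, $M(\str(w^j_{\mathcal{D}^j}))$ as a submodule and the other as a quotient, so there are obviously nonzero homomorphisms between $M(\str(u_\mathcal{D}))$ and its factors. Hence Ringel's theorem cannot be applied to $\mathcal{S}_B$ directly, and the promised ``generalization of Corollary~\ref{Cor_bic_hom_orthog}'' cannot exist.

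The paper's proof avoids this by applying Corollary~\ref{Cor_bic_hom_orthog} and Ringel's theorem only to the canonical joinand modules $\{M(\str(w^i_{\mathcal{D}^i}))\}_{i=1}^k$, which \emph{are} a semibrick, obtaining a wide subcategory $\mathcal{W} = \Filt(\{M(\str(w^i_{\mathcal{D}^i}))\}_i)$. It then argues in two directions: (a) each $\str(w_\mathcal{D})$ with $w_\mathcal{D} \in \psi(B)$ is a concatenation of some joinand strings (read off Figure~\ref{polygons_fig}), so $M(\str(w_\mathcal{D})) \in \mathcal{W} \cap \mathcal{M}$; and (b) conversely, any indecomposable of $\mathcal{W} \cap \mathcal{M}$ is filtered by the joinand simples with pairwise non-isomorphic factors (since its string is self-avoiding), hence is such a concatenation and so appears in $\psi(B)$. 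To repair your argument you would either have to replace $\mathcal{S}_B$ by the canonical joinands before invoking Ringel, or independently prove $\Filt(\mathcal{S}_B) = \Filt(\{M(\str(w^i_{\mathcal{D}^i}))\}_i)$ and $\Sim(\Filt(\mathcal{S}_B)) = \{M(\str(w^i_{\mathcal{D}^i}))\}_i$ --- at which point you are redoing the paper's argument with the wrong starting set.
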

\begin{proof}
Let $B \in \Bic(A)$, and let $\{J(w^1_{\mathcal{D}^1}), \ldots, J(w^k_{\mathcal{D}^k})\}$ be the canonical joinands in its canonical join representation. By Lemma~\ref{bricks_in_Pi}, Corollary~\ref{Cor_bic_hom_orthog}, and \cite[Theorem]{Ringel}, the extension closure of $\{M(\text{str}(w^i_{\mathcal{D}^i}))\}_{i = 1}^k$, denoted $\mathcal{W}$, is a wide subcategory of $\text{mod}(\Pi(A))$. By referring to Figure~\ref{polygons_fig}, for any $w_\mathcal{D} \in \psi(B)$ its corresponding string $\str(w_\mathcal{D})$ is a concatenation of some of the strings in $\{\text{str}(w^i_{\mathcal{D}^i})\}_{i = 1}^k$. Thus $M(\str(w_\mathcal{D})) \in \mathcal{W}$. By Lemma~\ref{Lemma_labels_to_Pi_strings}, given $w_\mathcal{D} \in \psi(B)$, we see that $M(\text{str}(w_\mathcal{D})) \in \mathcal{M}$. Thus $\text{add}(\bigoplus_{w_\mathcal{D}\in \psi(B)} M(\text{str}(w_{\mathcal{D}}))) \subset \mathcal{W}\cap \mathcal{M}.$ %

Conversely, suppose that $M(\text{str}(w_\mathcal{D})) \in \mathcal{W}\cap \mathcal{M}.$ Since $M(\text{str}(w_\mathcal{D})) \in \mathcal{W}$, $M(\text{str}(w_\mathcal{D}))$ has a filtration $0 = X_0 \subset X_1 \subset \cdots \subset X_m = M(\text{str}(w_\mathcal{D}))$ where for each $i = 1, 2, \ldots, m$ one has $X_i/X_{i-1} = M(\text{str}(w^j_{\mathcal{D}^j}))$ for some $j = 1, \ldots, k.$ As $M(\text{str}(w_\mathcal{D})) \in \mathcal{M}$, no two quotients $X_{i}/X_{i-1}$ and $X_{i^\prime}/X_{i^\prime-1}$ with $i \neq i^\prime$ are isomorphic. Thus $w$ is a concatenation of a subset of the strings $w^1, \ldots, w^k.$ Now by referring to Figure~\ref{polygons_fig}, we see that $w_\mathcal{D} \in \psi(B).$

It is obvious that this map is order-preserving.
\end{proof}

Next, by Lemma~\ref{Lemma_labels_to_Pi_strings}, there is a map $\text{widshad}(\Pi(A))\to 2^\mathcal{S}$ given by sending a given wide shadow $\widetilde{\mathfrak{W}}$ to the set of labels defining the string modules in $\widetilde{\mathfrak{W}}$. Let $W \subset \mathcal{S}$ denote the image of $\widetilde{\mathfrak{W}}$ under this map.

\begin{lemma}\label{lemma_shad_simples}
Given any nonzero wide shadow $\widetilde{\mathfrak{W}} \in \widshad(\Pi(A))$, there exists a nonempty subset $\Sim(\widetilde{\mathfrak{W}}) \subset \widetilde{\mathfrak{W}}$ consisting of the elements of $\widetilde{\mathfrak{W}}$ of the form $M(\str(w_\mathcal{D}))$ where $w$ appears in exactly one label in $W$. We let $\Sim(W) \subset W$ denote the set of labels defining the modules in $\Sim(\widetilde{\mathfrak{W}})$.
\end{lemma}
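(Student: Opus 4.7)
The plan is to select a label $w_\mathcal{D}\in W$ whose underlying string $w$ has minimal length among those appearing in $W$, and to show that $w$ must then appear in exactly one label in $W$, namely $w_\mathcal{D}$. Such a minimum exists because $\widetilde{\mathfrak{W}}$ is nonzero and $W$ is finite (since $\mathcal{M}$ has finitely many indecomposable objects, as $\Str(A)$ is finite). Write $\widetilde{\mathfrak{W}} = \mathcal{W}\cap \mathcal{M}$ for the smallest wide subcategory $\mathcal{W}$ of $\modu(\Pi(A))$ containing $\widetilde{\mathfrak{W}}$, and suppose for contradiction that some $w_{\mathcal{D}'}\in W$ satisfies $\mathcal{D}'\neq \mathcal{D}$. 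By Lemma~\ref{Lemma_labels_to_Pi_strings}, the modules $M_1:=M(\str(w_\mathcal{D}))$ and $M_2:=M(\str(w_{\mathcal{D}'}))$ are non-isomorphic indecomposables lying in $\mathcal{W}$, both supported on the same vertex sequence $x_1,\ldots,x_{d+1}$ traced by $w$.

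The key step is to produce a nonzero morphism $f\colon M_1\to M_2$. Such an $f$ is specified by scalars $\lambda_1,\ldots,\lambda_{d+1}$ at the shared vertices. At each position $i$ where $\mathcal{D}$ and $\mathcal{D}'$ assign the same split, $\str(w_\mathcal{D})$ and $\str(w_{\mathcal{D}'})$ have the same arrow $\overline{\gamma_i}^{\epsilon_i'}$ (with the same sign), and the commutation relation forces $\lambda_i=\lambda_{i+1}$; this groups the $\lambda_j$'s into $k+1$ maximal constant segments $C_0,\ldots,C_k$ separated by the $k$ positions $i_1<\cdots<i_k$ at which $\mathcal{D}$ and $\mathcal{D}'$ disagree. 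At each differing position $i_\ell$, inspection of the four cases in the definition of $\str$ shows that $M_1$ and $M_2$ use different arrows of $\overline{Q}$ (one starred, one unstarred) pointing in opposite directions along the walk, so the commutation relations there force exactly one of the two constant values on $C_{\ell-1}$ or $C_\ell$ to vanish. Since the $k$ differing positions impose at most $k$ vanishing constraints on the $k+1$ constants, at least one constant may be chosen nonzero, yielding a nonzero $f$.

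Because $M_1\not\cong M_2$, the morphism $f$ is not an isomorphism, so its image is a proper submodule of $M_2$ which lies in $\mathcal{W}$ by the abelianness of wide subcategories. Using that strings over a brick gentle algebra are self-avoiding (Proposition~\ref{Ind-Brick Finite}), the image is spanned by the vertices of the surviving segments and decomposes as a direct sum of substring modules $M(\widetilde{u}_{j'})$; each $\widetilde{u}_{j'}$ is a substring of $\str(w_{\mathcal{D}'})$ and hence specializes to a substring $u_{j'}$ of $w$, placing $M(\widetilde{u}_{j'})\in \mathcal{M}$ and therefore in $\widetilde{\mathfrak{W}}$. Since at least one segment is killed, the image has strictly smaller total dimension than $M_2$, so some $u_{j'}$ has length strictly less than $|w|$, contradicting the minimality of $|w|$ in $W$. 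Hence $w$ appears in exactly one label in $W$, so $M(\str(w_\mathcal{D}))\in \Sim(\widetilde{\mathfrak{W}})$ and this set is nonempty. The main obstacle is the local case analysis at each differing position showing the ``direction-reversal'' property, i.e.\ that the choice of starring forced by $\mathcal{D}$ versus $\mathcal{D}'$ always reverses the walk direction at that position, so that a commutation constraint kills precisely one of the two adjacent constant values.
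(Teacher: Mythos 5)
Your proof is correct and follows the same essential strategy as the paper: pick a string $w$ of minimal length appearing in a label of $W$, show two distinct labels $w_{\mathcal{D}}, w_{\mathcal{D}'}$ on $w$ would force a nonzero non-isomorphism $f\colon M(\str(w_\mathcal{D}))\to M(\str(w_{\mathcal{D}'}))$, and then use abelianness of $\mathcal{W}$ (together with closure of $\mathcal{M}$ under substrings) to produce a strictly shorter string in $W$, contradicting minimality.

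The main difference is one of explicitness. The paper chooses a specific break position $i$ (maximal with $u\in\mathcal{D}$, $u'\in\mathcal{D}'$ in $w=u\gamma_i^{\pm 1}u'$), asserts that there is a morphism $f$ whose image is exactly the string module $M(\str(u_{\mathcal{D}(u)}))$, and passes to the cokernel $M(\str(u'_{\mathcal{D}'(u')}))$. You instead build $f$ from scratch by writing a generic morphism as a vector of scalars along the walk, observing that agreements of $\mathcal{D},\mathcal{D}'$ force constancy and that each of the $k$ disagreements kills exactly one of the $k+1$ constant segments, and concluding that $\Hom$ is nonzero while $f$ cannot be an isomorphism; the image then splits as a direct sum of shorter substring modules, any one of which gives the contradiction. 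Your version is somewhat more robust because it does not need the image to be a single string module (in general it may be a direct sum of several), and it makes the ``direction-reversal'' mechanism at a disagreeing break explicit, which is the content of the paper's ``from the definition of $\str(-)$, this implies\ldots''. You also avoid the paper's global reductio (``suppose no string appears in exactly one label'') by arguing directly that the string of minimal length in $W$ must appear in exactly one label, which is a small simplification but not a different argument.
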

\begin{proof}
Suppose that there does not exist a string $w$ appearing in exactly one label in $W$. This means there is no label of the form ${e_i}_\emptyset$ in $W$ for any $i \in Q_0$. Therefore, let $w_\mathcal{D}, w_{\mathcal{D}^\prime} \in W$ where $w$ is a string of minimal length. Write $\str(w_\mathcal{D}) = \overline{\gamma_d}^{\epsilon_d}\cdots \overline{\gamma_1}^{\epsilon_1}$ and $\str(w_{\mathcal{D}^\prime}) = {\overline{\gamma_d}}^{\epsilon_d^\prime}\cdots\overline{\gamma_1}^{\epsilon_1^\prime}$. Observe that there exists $u \in \mathcal{D}$ and $u^\prime \in \mathcal{D}^\prime$ such that $w = u\gamma_i^{\pm 1} u^\prime$ for some $\gamma_i \in {Q}_1$. Let $i \in \{1,\ldots, d\}$ be maximal such that there exists $u \in \mathcal{D}$ and $u^\prime \in \mathcal{D}^\prime$ such that $w = u\gamma_i^{\pm 1} u^\prime$.


From the definition of the map $\str(-)$, this implies that there is a homomorphism $f: M(\text{str}{(w_\mathcal{D})}) \to M(\text{str}(w_{\mathcal{D}^\prime}))$ satisfying $\text{im}(f) = M(\text{str}(u_{\mathcal{D}(u)}))$. Thus $\text{coker}(f) = M(\text{str}(u^\prime_{\mathcal{D}^\prime(u^\prime)}))$. 

Now write $\widetilde{\mathfrak{W}} = \mathcal{W} \cap\mathcal{M}$. Since $\mathcal{W}$ is abelian, $M(\text{str}(u^\prime_{\mathcal{D}^\prime(u^\prime)})) \in \mathcal{W}$. We also know that $M(\text{str}(u^\prime_{\mathcal{D}^\prime(u^\prime)})) \in \mathcal{M}$. However, this contradicts the minimality of $w$. We obtain the desired result.
\end{proof}



\begin{remark}\label{splitting_remark}
The proof of Lemma~\ref{lemma_shad_simples} implies the following useful fact. Given $M(\str(w_\mathcal{D}))$ and $M(\str(w_{\mathcal{D}^\prime}))$ where $\mathcal{D} \neq \mathcal{D}^{\prime}$, then we can write $w = u\alpha^{\pm 1}u^\prime$ with $u \in \mathcal{D}$ and $u^\prime \in \mathcal{D}^\prime$ such that there is a homomorphism $f: M(\str(w_\mathcal{D})) \to M(\str(w_{\mathcal{D}^\prime}))$ with $\text{im}(f) = M(\str(u_{\mathcal{D}(u)}))$ and $\text{coker}(f) = M(\str(u^\prime_{\mathcal{D}^\prime(u^\prime)}))$. 
\end{remark}

\begin{lemma}\label{lemma_psiB_in_W}
The set $\{J(w_\mathcal{D}): \ w_{\mathcal{D}} \in \Sim({W})\}$ is a face of $\Delta^{CJ}(\Bic(A))$ for any $\widetilde{\mathfrak{W}} \in {\widshad}(\Pi(A))$. By defining $B := \bigvee_{w_{\mathcal{D}} \in \Sim(W)} J(w_\mathcal{D})$, we have  $\psi(B) \subset W.$
\end{lemma}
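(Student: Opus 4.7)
The plan is to prove the two assertions separately, reducing the first to the pairwise characterization of faces in $\Delta^{CJ}(\Bic(A))$ and deducing the second from extension-closedness of the ambient wide subcategory.

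For the first assertion, by the flag property of the canonical join complex (Lemma~\ref{lem:flag}) it suffices to check that every pair $\{J(w_\mathcal{D}), J(w'_{\mathcal{D}'})\}$ of distinct elements coming from $\Sim(W)$ forms a face. By Theorem~\ref{Thm_CJC}, this amounts to verifying the three conditions (1), (2), (3). Condition (1), namely $w \neq w'$, is immediate from the definition of $\Sim(W)$: if $w = w'$ then $w$ would appear in the two distinct labels $w_\mathcal{D}$ and $w'_{\mathcal{D}'}$ of $W$, contradicting $w_\mathcal{D} \in \Sim(W)$.

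For conditions (2) and (3) I would argue by contradiction, in each case exhibiting a label $w_{\mathcal{D}''} \in W$ with $\mathcal{D}'' \neq \mathcal{D}$, contradicting that $w$ appears in a unique label of $W$. Write $\widetilde{\mathfrak{W}} = \mathcal{W} \cap \mathcal{M}$ with $\mathcal{W}$ the smallest wide subcategory containing it. For condition (3), a containment $J(w_\mathcal{D}) \leq J(w'_{\mathcal{D}'})$ forces $w \in J(w'_{\mathcal{D}'})$, and via the description of torsion shadows in Theorem~\ref{Bic-Torshad} and Lemma~\ref{lemma_desc_T(B)} together with the closure of $\mathcal{W}$ under kernels and cokernels, this produces an object $M(\str(w_{\mathcal{D}''})) \in \widetilde{\mathfrak{W}}$ whose split set $\mathcal{D}''$ differs from $\mathcal{D}$. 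For condition (2), a decomposition $w = u^1 \alpha_1^{\pm 1} u^2 \cdots \alpha_{k-1}^{\pm 1} u^k$ with $k \geq 2$ and $u^i \in J(w_\mathcal{D}) \cup J(w'_{\mathcal{D}'})$ would let us iteratively apply Remark~\ref{splitting_remark} to extract modules $M(\str(u^i_{\mathcal{D}^i}))$ from $M(\str(w_\mathcal{D}))$ and $M(\str(w'_{\mathcal{D}'}))$ lying in $\mathcal{W}$; by extension-closedness these assemble to a module $M(\str(w_{\mathcal{D}''})) \in \widetilde{\mathfrak{W}}$ for some $\mathcal{D}'' \neq \mathcal{D}$ (the inequality being forced by the fact that at least one $u^i$ must come from $J(w'_{\mathcal{D}'})$). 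The hard part will be carefully tracking the split sets $\mathcal{D}^i$ through the iterated extensions so that the resulting $\mathcal{D}''$ is provably distinct from $\mathcal{D}$; this is where the bulk of the combinatorial bookkeeping occurs, and where an inductive refinement of the concatenation (choosing pieces of maximal length, as in the proof of Lemma~\ref{lemma_comp_not_canonical}) should be the cleanest device.

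For the second assertion, once we know that $\{J(w_\mathcal{D})\}_{w_\mathcal{D} \in \Sim(W)}$ is a face, Lemma~\ref{Lemma_cjr_cmr} identifies it with the canonical join representation of $B$. The interval analysis summarized in Figure~\ref{polygons_fig} (and used already in the proof of Lemma~\ref{lemma_Psi_to_shad}) shows that each label $v_\mathcal{E} \in \psi(B)$ has $\str(v_\mathcal{E})$ expressible as an iterated concatenation of a subset of $\{\str(w_\mathcal{D}) : w_\mathcal{D} \in \Sim(W)\}$, corresponding to a short-exact-sequence chain realizing $M(\str(v_\mathcal{E}))$ as an iterated extension of objects in $\Sim(\widetilde{\mathfrak{W}}) \subset \mathcal{W}$. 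By extension-closedness of $\mathcal{W}$ we conclude $M(\str(v_\mathcal{E})) \in \mathcal{W}$, and since $v_\mathcal{E}$ is a label we automatically have $M(\str(v_\mathcal{E})) \in \mathcal{M}$, hence $M(\str(v_\mathcal{E})) \in \widetilde{\mathfrak{W}}$ and $v_\mathcal{E} \in W$.
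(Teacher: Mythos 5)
Your proposal follows the same strategy as the paper: reduce to the pairwise conditions of Theorem~\ref{Thm_CJC} via the flag property (Lemma~\ref{lem:flag}), dispatch condition (1) immediately, argue conditions (2) and (3) by contradiction against the uniqueness of the label of $w$ (or $w'$) in $W$, and obtain $\psi(B)\subset W$ from Figure~\ref{polygons_fig}. You also correctly identify Remark~\ref{splitting_remark} together with extension-, quotient-, kernel- and cokernel-closure of $\mathcal{W}$ as the tools. However, you explicitly defer the ``combinatorial bookkeeping'' that is the actual substance of the argument, and the step you defer is not a routine induction. For condition (2), the paper does not merely exhibit \emph{one} new label $w_{\mathcal{D}''}$ with $\mathcal{D}''\neq\mathcal{D}$; it builds a composite map $\iota'\circ f\circ \pi:M(\str(w_\mathcal{D}))\to M(\str(w'_{\mathcal{D}'}))$, identifies a string-module summand $M(\str(v'_{\mathcal{D}'(v')}))$ of its cokernel, computes the kernel $M(\str(u'_{\mathcal{D}^{u'}}))$ of the corresponding surjection, and then writes down \emph{two} short exact sequences whose middle terms are the string modules for $w'_{(\mathcal{D}'\setminus\{v',u'\})\sqcup\{v'\}}$ and $w'_{(\mathcal{D}'\setminus\{v',u'\})\sqcup\{u'\}}$; both lie in $\widetilde{\mathfrak{W}}$, forcing $w'$ to appear in two distinct labels of $W$. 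Observing that ``at least one $u^i$ comes from the other $J$'' is by itself not enough to guarantee the resulting split set differs from the original — the two-sided extension trick is what makes the contradiction land. The same device handles condition (3). So the skeleton is right and the choice of tools is right, but the argument's core is left as an acknowledged gap, and your sketch for (2) (iterating Remark~\ref{splitting_remark} and ``assembling by extension-closedness'') would not, as stated, produce a label for $w$ (or $w'$) that is provably distinct from the given one.
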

\begin{proof}
We show that the elements of $\text{Sim}(W)$ satisfy Theorem~\ref{Thm_CJC}. Let $w_\mathcal{D}, w^\prime_{\mathcal{D}^\prime} \in \text{Sim}(W)$. By definition, $w \neq w^\prime$, which verifies 1).

Next, suppose without loss of generality that $w^\prime = u^1\alpha_1^{\pm 1}u^2\cdots u^{\ell-1}\alpha_{\ell-1}^{\pm 1}u^\ell$ for some $u^1,\ldots, u^\ell \in J(w_\mathcal{D})\cup J(w^\prime_{\mathcal{D}^\prime})$ and some $\alpha_1,\ldots, \alpha_{\ell-1} \in Q_1.$ It follows from the definition of $J(w^\prime_{\mathcal{D}^\prime})$ that $w^\prime$ may not be expressed as a concatenation of strings in $J(w^\prime_{\mathcal{D}^\prime})$. Thus, there exists $i \in \{1, \ldots, \ell\}$ such that $u^i \in J(w_\mathcal{D})\backslash J(w^\prime_{\mathcal{D}^\prime})$. This implies that we have a surjection $\pi: M(\str(w_\mathcal{D})) \twoheadrightarrow M(\str(u^i_{\mathcal{D}(u^i)}))$. 

Now, define $\mathcal{D}^{u^i}$ to be the set of splits of $u^i$ that realizes $\text{str}(u^i_{\mathcal{D}^{u^i}})$ as a proper substring of $\text{str}(w^\prime_{\mathcal{D}^\prime})$. By the definition of $\str(-)$, this implies that there is an inclusion $\iota^\prime: M(\text{str}(u^i_{\mathcal{D}^{u^i}})) \hookrightarrow M(\str(w^\prime_{\mathcal{D}^\prime}))$. From, Remark~\ref{splitting_remark} implies that we can write $u^i = u\alpha^{\pm 1} u^\prime$ for some $u \in \mathcal{D}(u^i)$ and $u^\prime \in \mathcal{D}^{u^i}$ such that there this a homomorphism $f: M(\str(u^i_{\mathcal{D}(u^i)}))\to M(\str(u^i_{\mathcal{D}^{u^i}}))$ whose image (resp., cokernel) is a string module defined by a proper substring of $\str(u^i_{\mathcal{D}(u^i)})$ (resp., $\str(u^i_{\mathcal{D}^{u^i}})$).

Consequently, the cokernel of the map $\iota^\prime \circ f \circ\pi:M(\str(w_\mathcal{D})) \to M(\str(w^\prime_{\mathcal{D}^\prime}))$ is a direct sum of one or two string modules each of which is defined by a split of $\str(w^\prime_{\mathcal{D}^\prime})$, and this cokernel must belong to $\widetilde{\mathfrak{W}}$. Let $\str(v^\prime_{\mathcal{D}^\prime(v^\prime)})$ be one such string defining a summand of this cokernel. Notice that the kernel of the surjection $\pi^\prime: M(\str(w^\prime_{\mathcal{D}^\prime})) \twoheadrightarrow M(\str(v^\prime_{\mathcal{D}^\prime(v^\prime)}))$ is a string module $M(\str(u^\prime_{\mathcal{D}^{u^\prime}}))$ where $\mathcal{D}^{u^\prime}$ is the set of splits of $u^\prime$ that realizes $\str(u^\prime_{\mathcal{D}^{u^\prime}})$ as a proper substring of $\str(w^\prime_{\mathcal{D}^\prime})$. As it is the kernel of $\pi^\prime$, we have $M(\str(u^\prime_{\mathcal{D}^{u^\prime}})) \in \widetilde{\mathfrak{W}}$. 

Next, $\mathcal{D}^\prime_* = \mathcal{D}^\prime\backslash\{v^\prime, u^\prime\}$ we have the following extensions: $$0 \to M(\text{str}(u^\prime_{\mathcal{D}^{u^\prime}})) \to M(\text{str}(w^\prime_{(\mathcal{D}^\prime\backslash\{v^\prime, u^\prime\})\sqcup\{v^\prime\}})) \to M(\text{str}(v^\prime_{\mathcal{D}^\prime({v^\prime})})) \to 0$$
and
$$0 \to M(\text{str}(v^\prime_{\mathcal{D}^\prime({v^\prime})})) \to M(\text{str}(w^\prime_{(\mathcal{D}^\prime\backslash\{v^\prime, u^\prime\})\sqcup\{u^\prime\}})) \to M(\text{str}(u^\prime_{\mathcal{D}^{u^\prime}})) \to 0.$$ Thus both of the middle terms of these extensions belong to $\widetilde{\mathfrak{W}}$. We obtain that both of the labels $w^\prime_{(\mathcal{D}^\prime\backslash\{v^\prime, u^\prime\})\sqcup\{v^\prime\}}$ and $w^\prime_{(\mathcal{D}^\prime\backslash\{v^\prime, u^\prime\})\sqcup\{u^\prime\}}$ belong to $W$, contradicting that $w^\prime$ appears in a single label in $W$. This verifies 2). 

Now suppose that $J(w^\prime_{\mathcal{D}^\prime}) \le J(w_\mathcal{D})$. If $w^\prime \in \{w\} \sqcup \mathcal{D}\sqcup \bigcup_{u\in\mathcal{D}}S(u,\mathcal{D})$, then $\mathcal{D}^\prime = \mathcal{D}(w^\prime).$ In particular, there is a surjection $M(\text{str}(w_\mathcal{D})) \twoheadrightarrow M(\text{str}(w^\prime_{\mathcal{D}(w^\prime)}))$. The kernel of this map is $M(\text{str}(u_{\mathcal{D}^u}))\oplus M(\text{str}(v_{\mathcal{D}^v}))$ for some strings $\text{str}(u_{\mathcal{D}^u})$ and $\text{str}(v_{\mathcal{D}^v})$ with the caveat that at most one of these strings may be the empty string. Here $\mathcal{D}^u$ (resp., $\mathcal{D}^{v}$) is the set of splits of $u$ (resp., $v$) realizing $\str(u_{\mathcal{D}^u})$ (resp., $\str(v_{\mathcal{D}^v})$) as a proper substring of $\str(w_{\mathcal{D}})$.

Assume that both of these strings are nonempty. Then there exist arrows $\alpha, \beta \in {Q}_1$ such that  $w = u\alpha w^\prime\beta^{-1} v.$ Observe that $u\alpha w^\prime \in \mathcal{D}$. It follows that $M(\str(u\alpha w^\prime_{\mathcal{D}(u\alpha w^\prime)}))$ is the cokernel of the inclusion $M(\str(v_{\mathcal{D}^v}))\hookrightarrow M(\str(w_\mathcal{D}))$. Now we have the following extensions: 
$$0\to M({\str}(u\alpha w^\prime_{\mathcal{D}(u\alpha w^\prime)})) \to M(\str(w_{(\mathcal{D}\backslash\{u\alpha w^\prime, v\})\sqcup \{v\}})) \to M({\str}(v_{\mathcal{D}^v})) \to 0$$
and
$$0\to M({\str}(v_{\mathcal{D}^v})) \to M(\str(w_{(\mathcal{D}\backslash\{u\alpha w^\prime, v\})\sqcup \{u\alpha w^\prime\}})) \to M({\str}(u\alpha w^\prime_{\mathcal{D}(u\alpha w^\prime)})) \to 0.$$
Since $M({\str}(u\alpha w^\prime_{\mathcal{D}(u\alpha w^\prime)})), M({\str}(v_{\mathcal{D}^v})) \in \widetilde{\mathfrak{W}}$, we know that both middle terms of these extensions belong to $\widetilde{\mathfrak{W}}$. 


The proof when only one of $\text{str}(u_{\mathcal{D}^u})$ and $\text{str}(v_{\mathcal{D}^v})$ is nonempty is similar so we omit it. In each case, we contradict that $w$ appears in exactly one label in $W$, which verifies 3).

Lastly, if $w^\prime \in J(w_\mathcal{D})\backslash \left(\{w\} \sqcup \mathcal{D}\sqcup \bigcup_{u\in\mathcal{D}}S(u,\mathcal{D})\right),$ then we have that $w^\prime = u^1\alpha_1^{\pm 1}u^2\cdots u^{\ell-1}\alpha_{\ell-1}^{\pm 1}u^\ell$ for some $u^1,\ldots, u^\ell \in \{w\} \sqcup \mathcal{D}\sqcup \bigcup_{u\in\mathcal{D}}S(u,\mathcal{D})$ with $\ell \ge 2$ and some $\alpha_1,\ldots, \alpha_{\ell-1} \in Q_1.$ However, such an expression for $w^\prime$ contradicts 2), which we have already verified. We conclude that 3) holds.

The final assertion follows from Figure~\ref{polygons_fig}.\end{proof}

\begin{lemma}\label{lemma_W_in_psiB}
The indecomposable objects of $\mathfrak{W}$ are exactly the string modules defined by strings that may be realized as a concatenation of some of the strings in $\{\text{str}(w^i_{\mathcal{D}^i})\ | \ w^i_{\mathcal{D}^i} \in \text{Sim}(W)\}.$ Consequently, $W \subset \psi(B).$
\end{lemma}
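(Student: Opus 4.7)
The plan is to establish both containments in the claimed characterization of the indecomposables of $\widetilde{\mathfrak{W}}$, after which the inclusion $W \subseteq \psi(B)$ drops out of Lemma~\ref{lemma_psiB_in_W} combined with the description of $\psi$ used in the proof of Lemma~\ref{lemma_Psi_to_shad}. The extension-closure direction is immediate: if $\str(w_\mathcal{D}) = \str(w^{i_1}_{\mathcal{D}^{i_1}})\,\overline{\alpha_1}^{\pm 1}\cdots \overline{\alpha_{\ell-1}}^{\pm 1}\,\str(w^{i_\ell}_{\mathcal{D}^{i_\ell}})$ with each $w^{i_j}_{\mathcal{D}^{i_j}} \in \Sim(W)$, then $M(\str(w_\mathcal{D}))$ is an iterated extension of the modules $M(\str(w^{i_j}_{\mathcal{D}^{i_j}})) \in \widetilde{\mathfrak{W}} \subseteq \mathcal{W}$; since $\mathcal{W}$ is wide it is closed under extensions, and $M(\str(w_\mathcal{D})) \in \mathcal{M}$ by construction, so it belongs to $\widetilde{\mathfrak{W}}$.

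For the other containment I argue by induction on the length of $w$. The base case $w_\mathcal{D} \in \Sim(W)$ is trivial. Otherwise, the string $w$ appears in at least two labels $w_\mathcal{D}, w_{\mathcal{D}'} \in W$ with $\mathcal{D} \neq \mathcal{D}'$, and Remark~\ref{splitting_remark} produces a factorization $w = u\alpha^{\pm 1}u'$ together with a morphism $f : M(\str(w_\mathcal{D})) \to M(\str(w_{\mathcal{D}'}))$ whose image is $M(\str(u_{\mathcal{D}(u)}))$ and whose cokernel is $M(\str(u'_{\mathcal{D}'(u')}))$. Because $\mathcal{W}$ is abelian, both the image and cokernel lie in $\mathcal{W}$, and since they are defined by strings of $\widetilde{\Str}(A)$ they also lie in $\mathcal{M}$, hence in $\widetilde{\mathfrak{W}}$. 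By the inductive hypothesis, each of the strictly shorter strings $\str(u_{\mathcal{D}(u)})$ and $\str(u'_{\mathcal{D}'(u')})$ is a concatenation of strings from $\{\str(w^i_{\mathcal{D}^i}) : w^i_{\mathcal{D}^i} \in \Sim(W)\}$, and joining these two factorizations across the arrow $\alpha$ will yield the desired factorization of $\str(w_\mathcal{D})$.

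For the consequence, Lemma~\ref{lemma_psiB_in_W} exhibits $B = \bigvee_{w_\mathcal{D} \in \Sim(W)} J(w_\mathcal{D})$ as a canonical join representation, so by Lemma~\ref{Lemma_cjr_cmr} we have $\lambda_\downarrow(B) = \Sim(W)$. Inspection of Figure~\ref{polygons_fig}, exactly as used in the proof of Lemma~\ref{lemma_Psi_to_shad}, then identifies $\psi(B)$ with the set of labels $v_\mathcal{E}$ for which $\str(v_\mathcal{E})$ is a concatenation of strings from $\{\str(w_\mathcal{D}) : w_\mathcal{D} \in \Sim(W)\}$; combined with the forward direction of the lemma applied to each $w_\mathcal{D} \in W$, this yields $W \subseteq \psi(B)$.

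The main obstacle I anticipate is in the inductive step of the second containment: one must verify that the decoration pattern on $\str(w_\mathcal{D})$ across the arrow $\alpha$ is exactly compatible with the label data $\mathcal{D}(u)$ and $\mathcal{D}'(u')$ supplied by Remark~\ref{splitting_remark}, so that the two inductively produced factorizations actually concatenate to $\str(w_\mathcal{D})$ (rather than to some other lift of $w$ to $\widetilde{\Str}(A)$). This reduces to a case analysis of the four decoration rules defining $\str$ at each arrow of $w$ together with the observation that $u \in \mathcal{D}$ forces the decoration of $\alpha$ to be read off from the split $u$ on the left, matching the labels attached to the image of $f$.
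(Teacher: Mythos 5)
Your argument follows the paper's proof almost exactly: induct on the length of $w$, apply Remark~\ref{splitting_remark} to produce $f\colon M(\str(w_\mathcal{D}))\to M(\str(w_{\mathcal{D}'}))$, pass to the image and cokernel (which lie in $\widetilde{\mathfrak{W}}$ by abelianness), invoke the inductive hypothesis, and then deduce the characterization; the final inclusion $W\subseteq\psi(B)$ is read off from Figure~\ref{polygons_fig} as in the paper. You add one thing the paper leaves implicit: an explicit verification of the ``forward'' containment, that any string which is a concatenation of strings $\str(w^i_{\mathcal{D}^i})$ with $w^i_{\mathcal{D}^i}\in\Sim(W)$ defines a module in $\widetilde{\mathfrak{W}}$, via extension-closure of $\mathcal{W}$ and membership in $\mathcal{M}$. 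That is a correct and worthwhile supplement.

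The obstacle you flag at the end is genuine, and it is also present (untreated) in the paper's own proof. Concretely: $\str(w_\mathcal{D})$ decomposes across the arrow at the $u$--$u'$ junction as the concatenation of the \emph{image} $\str(u_{\mathcal{D}(u)})$ with the \emph{kernel} of $f$, whereas what the inductive hypothesis is applied to (following Remark~\ref{splitting_remark}) is the image and the \emph{cokernel} $\str(u'_{\mathcal{D}'(u')})$. The cokernel carries the decoration of $u'$ coming from $\str(w_{\mathcal{D}'})$, not from $\str(w_\mathcal{D})$, and there is no reason for these to coincide in general since $\mathcal{D}$ and $\mathcal{D}'$ may disagree at breaks interior to $u'$. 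Rather than attempting a case analysis to force compatibility, the cleaner resolution is to apply the inductive hypothesis to $\ker(f)$ in place of $\operatorname{coker}(f)$: since $\mathcal{W}$ is abelian, $\ker(f)\in\mathcal{W}$, it manifestly lies in $\mathcal{M}$, and by construction $\str(w_\mathcal{D})$ \emph{is} the concatenation of $\str(u_{\mathcal{D}(u)})$ and the string defining $\ker(f)$ across the relevant arrow of $\overline{Q}$. With this substitution the decoration-matching concern disappears and the inductive step closes without further casework.
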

\begin{proof}
Let $M(\text{str}(w_\mathcal{D})) \in \mathfrak{W}$ where $w_\mathcal{D} \not \in \text{Sim}(W).$ We induct on the length of the string $w$. 

By assumption, there exists $w_{\mathcal{D}^\prime} \in W$ with $\mathcal{D} \neq \mathcal{D}^\prime.$ This implies that there exists $u \in \mathcal{D}$ and $u^\prime \in \mathcal{D}^\prime$ such that $w = u\alpha^{\pm 1} u^\prime$ for some $\alpha \in Q_1$. Using Remark~\ref{splitting_remark}, there is a homomorphism $f: M(\text{str}{(w_\mathcal{D})}) \to M(\text{str}(w_{\mathcal{D}^\prime}))$ with $\text{im}(f) = M(\text{str}(u_{\mathcal{D}(u)}))$ and $\text{coker}(f) = M(\text{str}(u^\prime_{\mathcal{D}^\prime(u^\prime)})).$ Therefore, $M(\text{str}(u_{\mathcal{D}(u)})), M(\text{str}(u^\prime_{\mathcal{D}^\prime(u^\prime)})) \in \widetilde{\mathfrak{W}}$. By induction, each of these strings defining these modules are concatenations of a subset of the strings in $\{\text{str}(w^i_{\mathcal{D}^i})\ | \ w^i_{\mathcal{D}^i} \in \text{Sim}(W)\}$. Since $M(\text{str}(w_\mathcal{D})) \in \mathcal{M}$, these two subsets are disjoint and so $M(\text{str}(w_\mathcal{D}))$ is also a concatenation of a subset of the strings in $\{\text{str}(w^i_{\mathcal{D}^i}) \ | \ w^i_{\mathcal{D}^i} \in \text{Sim}(W)\}$.

The final assertion is now implied by Figure~\ref{polygons_fig}.
\end{proof}

\begin{proof}[Proof of Theorem~\ref{thm_psi_shad_isom}]
Lemma~\ref{lemma_Psi_to_shad} shows that the map in the statement of the Theorem is order-preserving and its image lies in $\widshad(\Pi(A))$. 

The map $\widshad(\Pi(A)) \to 2^\mathcal{S}$ defined before the statement of Lemma~\ref{lemma_shad_simples} is clearly order-preserving. That this map produces an element of $\Psi(\Bic(A))$ follows from Lemma~\ref{lemma_psiB_in_W} and Lemma~\ref{lemma_W_in_psiB}.

It is clear that these maps are inverses of each other.
\end{proof}

The following corollary is a consequence of Theorem~\ref{thm_psi_shad_isom} and Lemma~\ref{widshad-epi}.

\begin{corollary}
The poset $\Psi(\Bic(A))$ is a lattice.
\end{corollary}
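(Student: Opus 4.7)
The proof is essentially immediate from the two cited results, so the plan is just to carefully assemble them. First I would verify the hypotheses of Lemma~\ref{widshad-epi} in our setting: take the algebra epimorphism to be the canonical surjection $\Pi(A)\twoheadrightarrow A$ (coming from killing the starred arrows $\alpha^*$ and imposing the same admissible relations of $I$), and take $\mathcal{M}$ to be the subcategory of $\modu(\Pi(A))$ fixed in Section~\ref{torshad_section}. I need to check that $\modu(A)\subseteq \mathcal{M}$. Since $A$ is brick gentle, every indecomposable $A$-module is a string module $M(w)$ for some $w\in\Str(A)$; lifting $w$ along the surjection $\Pi(A)\twoheadrightarrow A$ (by the identification of $\Str(A)$-lifts with $\widetilde{\Str}(A)$ given just before Lemma~\ref{Lemma_labels_to_Pi_strings}) realizes $M(w)$ as a summand of a module of the form $M(\widetilde{w})$ with $\widetilde{w}\in\widetilde{\Str}(A)$. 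Hence $\modu(A)\subseteq\mathcal{M}$.

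Next, I would invoke Lemma~\ref{widshad-epi}(1) with these choices to conclude that $\widshad_{\mathcal{M}}(\Pi(A)) = \widshad(\Pi(A))$ is a complete lattice. Combined with Theorem~\ref{thm_psi_shad_isom}, which establishes a poset isomorphism $\vartheta:\Psi(\Bic(A))\stackrel{\sim}{\longrightarrow}\widshad(\Pi(A))$, the complete lattice structure transports through $\vartheta$: any family $\{\psi(B_i)\}_{i\in I}$ has a well-defined meet given by $\vartheta^{-1}\bigl(\bigwedge_i \vartheta(\psi(B_i))\bigr)$, and since $\Psi(\Bic(A))$ is finite with a unique maximal element $\vartheta^{-1}(\modu(\Pi(A))\cap\mathcal{M})$, joins exist as well. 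Therefore $\Psi(\Bic(A))$ is a (complete) lattice.

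There is really no obstacle here — the entire content of the statement is buried in the isomorphism of Theorem~\ref{thm_psi_shad_isom}, which is what allows the lattice structure on wide shadows to descend to a shard intersection order that, a priori, is only a poset. The only thing worth emphasizing in the write-up is the nontriviality of the underlying fact from the literature cited in the introduction (see \cite{muhle} and \cite[Problem 9.5]{ReadingPAB}): shard intersection orders of congruence-uniform lattices are not lattices in general, so this corollary is a genuine consequence of the categorification, not a formal observation about the lattice $\Bic(A)$ alone.
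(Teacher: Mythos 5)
Your proof is correct and follows exactly the route the paper indicates: Lemma~\ref{widshad-epi}(1) gives that $\widshad(\Pi(A))$ is a (complete) lattice, and Theorem~\ref{thm_psi_shad_isom} transports this structure to $\Psi(\Bic(A))$. One small slip in the verification that $\modu(A)\subseteq\mathcal{M}$: the $A$-module $M(w)$ restricted along $\Pi(A)\twoheadrightarrow A$ is the $\Pi(A)$-string module on $w$ \emph{itself} (with all starred arrows acting as zero), which lies in $\widetilde{\Str}(A)$ since it trivially specializes to $w$; it is not a summand of $M(\widetilde{w})$ for the direct lift $\widetilde{w}$ constructed before Lemma~\ref{Lemma_labels_to_Pi_strings}, since those two $\Pi(A)$-modules have different arrows acting. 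This does not affect the argument, as the containment $\modu(A)\subseteq\mathcal{M}$ still holds (and, in fact, the proof of Lemma~\ref{widshad-epi}(1) does not invoke that hypothesis).
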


\begin{proof}[Proof of Theorem~\ref{Thm_torshad_widshad_bij}]
From Theorem~\ref{Bic-Torshad} and Theorem~\ref{thm_psi_shad_isom}, we have that $\vartheta\circ B(-): \torshad(\Pi(A))\to \widshad(\Pi(A))$ and $\widetilde{\mathfrak{T}}\circ\vartheta^{-1}(-): \widshad(\Pi(A))\to \torshad(\Pi(A))$ are inverse bijections.
\end{proof}

\begin{remark}
Let $B \in \Bic(A)$ with $\lambda_{\downarrow}(B) = \{w^i_{\mathcal{D}^i}\}_{i = 1}^k$. Now let $\widetilde{\mathfrak{W}} \in \widshad(\Pi(A))$ and $\widetilde{\mathfrak{T}} \in \torshad(\Pi(A))$ denote the wide shadow and torsion shadow corresponding to $B$. It is straightforward to show that $$\vartheta\circ B(\widetilde{\mathfrak{T}}) = \text{filt}(\text{add}(\bigoplus_{i = 1}^k M(\str(w^i_{\mathcal{D}^i})) )) \cap \mathcal{M}$$ and $$\widetilde{\mathfrak{T}}\circ\vartheta^{-1}(\widetilde{\mathfrak{W}}) = \text{filt}(\text{gen}(\bigoplus_{i = 1}^k M(\str(w^i_{\mathcal{D}^i})) )) \cap \mathcal{M}.$$
\end{remark}

$$ $$

\bibliography{torshad_bib.bib}
\bibliographystyle{plain}

\end{document}